\newcommand\Betw{B} 
\DeclareMathOperator{\At}{\mathrm{At}}
\newcommand\Bmod[1]{\mathop{\langle\Betw\rangle_{#1}}} 
\newcommand{\Bleqs}{B_{\leqslant}} 
\newcommand*{\QED}{\null\nobreak\hfill\ensuremath{\mathord{\dashv}}}
\newcommand{\K}{\ensuremath{\mathcal{K}}\xspace}
\newcommand{\klam}[1]{\ensuremath{\langle #1 \rangle}}
\newcommand{\poss}[1]{\ensuremath{\langle { #1 \rangle}}}
\newcommand{\suff}[1]{\ensuremath{[[ #1 ]]}}
\newcommand{\set}[1]{\ensuremath{\{#1\}}}
\newcommand{\z}{\emptyset}
\newcommand{\tand}{\text{ and }}
\newcommand{\Va}{\ensuremath{\mathbf{V}}\xspace}
\newcommand{\win}{\boxbar}
\newcommand{\ua}[1]{\ensuremath{\mathop{\uparrow}#1}}
\newcommand{\two}{\ensuremath{\mathord{\mathbf{2}}}\xspace}
\newcommand{\into}{\hookrightarrow}
\newcommand{\Kt}{\ensuremath{K\tilde{~}}\xspace}
\newcommand{\tor}{\text{ or }}
\newcommand{\card}[1]{\vert #1 \vert}
\newcommand{\br}{betweenness relation\xspace}
\newcommand{\df}{\defeq}
\newcommand{\tiff}{if and only if\xspace}
\newcommand{\aright}{($\rarrow$)\xspace}
\newcommand{\aleft}{($\larrow$)\xspace}
\newcommand{\tbc}{\par\medskip\textcolor{blue}{{\sf To be continued.}}}
\DeclareMathOperator{\ultV}{\mathscr{V}}
\DeclareMathOperator{\Cbtw}{\mathbf{Cbtw}}
\DeclareMathOperator{\Abtw}{\mathbf{Abtw}}
\newcommand{\Cm}{\ensuremath{\operatorname{{\mathsf{Cm}}}}}
\newcommand{\Cf}{\ensuremath{\operatorname{{\mathsf{Cf}}}}}
\newcommand{\Em}{\ensuremath{\operatorname{{\mathsf{Em}}}}}
\newcommand{\Ce}{\ensuremath{\operatorname{{\mathsf{Ce}}}}}
\medskip\color{blue}{ID: \  }}{\hfill{$\square$}\color{black}\par\medskip}
\newtheorem{theorem}{Theorem}[section]
\newtheorem{lemma}[theorem]{Lemma}
\newtheorem{proposition}[theorem]{Proposition}
\newtheorem{corollary}[theorem]{Corollary}
\newtheoremstyle{mytheoremstyle} 
    {1em plus .2em minus .1em}                    
    {1em plus .2em minus .1em}                    
    {\rmfamily}                   
    {}                           
    {\bfseries}                   
    {.}                          
    {.5em}                       
    {}  
\theoremstyle{mytheoremstyle}
\newtheorem{definition}[theorem]{Definition}
\newtheorem{example}[theorem]{Example}
\newcommand\rarrow{\rightarrow}
\newcommand\larrow{\leftarrow}
\renewcommand\iff{\longleftrightarrow}
\newcommand\Rat{\varmathbb{Q}}
\newcommand\Intgr{\varmathbb{Z}}
\newcommand\iffdef{\;\mathrel{\mathord{:}\mathord{\longleftrightarrow}}\;}
\newcommand\defeq{\coloneqq} 
\newcommand\eqdef{\eqqcolon}
\newcommand{\zero}{\mathbf{0}} 
\newcommand{\one}{\mathbf{1}} 
\newcommand{\ult}{\mathscr{U}} 
\DeclareMathOperator{\Ult}{Ult} 
\DeclareMathOperator{\upop}{\uparrow} 
\newcommand\mfr{\mathfrak}
\newcommand\frA{\mfr{A}}
\newcommand\frF{\mfr{F}}
\newcommand\frN{\mfr{N}}
\newcommand\frQ{\mfr{Q}}
\newcommand\frZ{\mfr{Z}}
\newcommand\Rel{\mathrel{R}}
\newcommand\dftt{\mathtt{df}\,}
\theoremstyle{definition}
\date{}
\title{Betweenness algebras}
\author{Ivo D\"untsch, Rafa\l\ Gruszczy\'nski, Paula Mench\'on}
\address{Ivo D\"untsch\\
Computer Science Department\\
Brock University\\	
St Catharines, Ontario\\
Canada\\
\textsc{Orcid:} 0000-0001-8907-2382}
\email{duentsch@brocku.ca}
\address{Rafa\l\ Gruszczy\'nski\\
Department of Logic\\
Nicolaus Copernicus University in Toru\'n\\
Poland\\
\textsc{Orcid:} 0000-0002-3379-0577}
\email{gruszka@umk.pl}
\urladdr{www.umk.pl/\textasciitilde gruszka}
\address{Paula Mench\'on\\
Department of Logic\\
Nicolaus Copernicus University in\linebreak Toru\'n\\
Poland\\
\textsc{Orcid:} 0000-0002-9395-107X}
\email{paula.menchon@v.umk.pl}
\begin{document}

\maketitle

\begin{abstract}
    We introduce and study a class of \emph{betweenness algebras}---Boolean algebras with binary operators, closely related to ternary frames with a betweenness relation. From various axioms for betweenness, we chose those that are most common, which makes our work applicable to a wide range of betweenness structures studied in the literature. On the algebraic side, we work with two operators of \emph{possibility} and of \emph{sufficiency}.

    \medskip

\noindent MSC: 06E25, 03B45

\medskip

\noindent Keywords: Boolean algebras with operators, modal algebras, sufficiency algebras, binary operators, ternary relations, betweenness relation
\end{abstract}

\section{Introduction}

\begin{sloppypar}
Betweenness relations---well-known from geometry---are probably the most deeply investigated ternary relations in logic and mathematics. The origin of the studies can be traced back at least to the works of Huntington and Kline \citeyearpar{Huntington-ANSOPFBWPOCI,Huntington-et-al-SOIPFB}, through the seminal contributions of Tarski \citep{Tarski-Givant-TSG}, up to the results of \citet{Altwegg-ZADTGM}, \citet{Sholander-TLOAB}, \citet{Dvelmeyer-et-al-ACOOSALVBR} and \citet{du_between}.
\end{sloppypar}

At least as early as in the seminal papers of \citet{jt51}, the connection between $n+1$-ary relations and $n$-ary operators on Boolean algebras was established in the form of J\'onsson-Tarski duality for Boolean algebras with operators. The developed techniques turned out to be particularly successful in the study of modal logics and their algebraic semantics.

The abstract approach of \citeauthor{jt51} can be made concrete by focusing on a relation (or relations) of particular choice, betweenness in the case of the approach from our work. As observed by \citet{vanBenthem-MLS}, a ternary betweenness relation $B$ gives rise to the binary modal operator $\Bmod{}$ whose relational semantics is given by the following condition:
\[
x\Vdash\Bmod{}(\varphi,\psi)\iffdef(\exists\,{y,z\in U})\,(\Betw(y,x,z)\ \text{and}\ y\Vdash\varphi\ \text{and}\ z\Vdash\psi)\,,
\]
where $\Betw(y,x,z)$ is interpreted as \emph{point $x$ is between points $y$ and $z$}. Our intention is to investigate the algebraic properties of $\Bmod{}$ within the framework of Boolean algebras with operators.

In Section~\ref{sec:bao} we recall basic facts about \emph{possibility} and \emph{sufficiency} operators which will find their application in the sequel. In Section~\ref{sec:betweenness} we commit ourselves to a~particular notion of \emph{betweenness} by choosing what we see as the core axioms for the reflexive version of the geometric relation. Section~\ref{sec:non-definability} justifies our approach via possibility and sufficiency operators as we show that the class of the so-called \emph{betweenness frames} is neither modally nor sufficiency axiomatic. In Section~\ref{sec:cmcf} rudimentary facts about complex algebras of betweenness frames are established. These serve as  <<r\^ole models>> for first-order axioms for what we call \emph{betweenness algebras} in Section~\ref{sec:BetAlg}. In Section~\ref{sec:repAbtw} we prove that the class of betweenness algebras is closed under canonical extensions, and in Section~\ref{sec:BetAlg-BetComplAlg} we examine the connections between complex and abstract betweenness algebras, in~particular, we show where they differ. Section~\ref{sec:conclusion}---which rounds off the paper---points to possible extensions of our work and open problems that we are going to tackle in further installments to the work we present herein.

In the paper, $-X$ is the set-theoretical complement of $X$ with respect to a fixed domain. For an element $x$ of an ordered set $\klam{U,\leq}$, $\upop x\defeq\{y\in U \mid x\leq y\}$. We assume that unary operators always bind stronger than $n$-ary operators with $n\geqslant 2$. Throughout, algebras are assumed to be non-trivial (i.e., have at least two elements), and $\klam{A,+,\cdot, \zero, \one}$ is a non-trivial Boolean algebra (BA). To avoid notational cluttering we shall usually identify Boolean algebras with their universe; in particular, $2^U$ denotes the power set algebra of~$U$. Furthermore, $\At(A)$ is the set of atoms of $A$ and $\Ult(A)$ the set of its ultrafilters.

Our general references are \citep{Blackburn-et-al-ML} for modal logic, \citep{kop89} for Boolean algebras and \citep{bs_ua} for universal algebra.

\section{Boolean algebras with operators}\label{sec:bao}

Boolean algebras with operators (BAOs) were introduced by \cite{jt51}, arising from Tarski's work on relation algebras. It turned out later that BAOs were intimately connected to the semantics of modal logics; for details the reader is encouraged to consult \cite[Section 3.3]{gold03}. An extensive treatment of duality theories for BAOs was presented by \citet{giv14}.

In this section we shall outline the properties of BAOs as we need them, and augment them by a theory of sufficiency algebras.

\begin{definition}\label{def: modop}
    A mapping  $f\colon A^n\to A$ (where $n\geqslant 1$) is an \emph{$n$-ary possibility operator} \tiff it satisfies the following conditions:
    \begin{enumerate}[label=(P\arabic*),itemsep=0pt]
        \item If there is $i$ such that $1\leqslant i\leqslant n$ and $x_i=\zero$, then $f(x_1,\ldots,x_n)=\zero$ (normality).
        \item If $\klam{x_1, \ldots, x_n}$ and $\klam{y_1, \ldots, y_n}$ are $n$-termed sequences in $A$ such that $x_i = y_i$ for all $i \neq k$, then $f(x_1,\ldots, x_k, \ldots,x_n)+f(y_1, \ldots, y_k, \ldots,y_n)=f(x_1, \ldots,  x_k+y_k, \ldots, x_n)$ (additivity). \qed
    \end{enumerate}
\end{definition}
Observe that a possibility operator is monotone in each argument.

Suppose that $1 \leqslant n$ and $f\colon  A^n \to A$ is a possibility operator on $A$. The \emph{possibility canonical frame} of $\frA \defeq \klam{A,f}$ is the structure $\Cf^p(\frA) \defeq \klam{\Ult(A), Q_f}$ where $Q_f$ is the $n+1$-ary relation on $\Ult(A)$ defined by
\begin{gather}\label{def: cfp}
Q_f(\ult_1, \ldots, \ult_{n+1}) \iffdef f[\ult_1 \times \ldots \times \ult_n] \subseteq \ult_{n+1}.
\end{gather}
The choice of the $n+1^{\text{st}}$ entry as special is arbitrary; any other index would have done. Indeed, when considering the binary operators on betweenness algebras arising from ternary relations we shall use the middle component.

Suppose that $\frF \defeq \klam{U,Q}$ is a frame where $Q$ is an $(n+1)$-ary relation on $U$. We define an $n$-ary operator $\poss{Q}\colon \left(2^U\right)^n \to 2^U$ by
\begin{gather}\label{def: possR}
\poss{Q}(X_1, \ldots, X_n) \defeq \set{u \in U\mid (\exists x_1 \in X_1 \ldots \exists x_n \in X_n)\,Q(x_1, \ldots, x_n,u)}.
\end{gather}
It is well known that $\poss{Q}$ is a complete possibility operator \citep{jt51}. The structure $\klam{2^U, \poss{Q}}$ is called the \emph{full possibility complex algebra over $\frF$}, denoted by $\Cm^p(\frF)$. Each subalgebra is called a \emph{possibility complex algebra over $\frF$}. If $\frF$ is understood we shall just speak of possibility complex algebras.

The following result is decisive for the theory of BAOs and generalizes Stone's theorem of representing Boolean algebras:

\begin{theorem}[{\citealp[Theorem 3.10]{jt51}}]\label{thm:repP}
If $\frA \defeq \klam{A,f}$ is a Boolean algebra with an $n$-ary possibility operator $f$, then the Stone map $h\colon h \into 2^{\Ult(A)}$, defined by $h(x) \defeq \set{\ult \in \Ult(A)\mid x \in\ult}$ is an embedding of $\frA$ into  $\Cm^p(\Cf^p(\frA))$.
\end{theorem}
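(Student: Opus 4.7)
The plan is to decompose the proof into two parts: first, Stone's theorem already gives that $h$ is a Boolean embedding of $A$ into $2^{\Ult(A)}$, so the only new task is to show that $h$ respects the operator, i.e.
\[
h\bigl(f(x_1,\ldots,x_n)\bigr) = \poss{Q_f}\bigl(h(x_1),\ldots,h(x_n)\bigr)
\]
for all $x_1,\ldots,x_n\in A$. Unpacking both sides using the definitions \eqref{def: cfp} and \eqref{def: possR}, this amounts to saying that $f(x_1,\ldots,x_n)\in\ult$ holds if and only if there exist ultrafilters $\ult_1,\ldots,\ult_n$ with $x_i\in\ult_i$ for each $i$ and $f[\ult_1\times\cdots\times\ult_n]\subseteq\ult$.

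The $(\supseteq)$ direction is immediate: if such $\ult_i$ exist then $(x_1,\ldots,x_n)\in\ult_1\times\cdots\times\ult_n$, hence $f(x_1,\ldots,x_n)\in\ult$ by the defining inclusion. The non-trivial direction is $(\subseteq)$, and this is where the main work lies.

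For the hard direction, I would build $\ult_1,\ldots,\ult_n$ one position at a time. Fix $\ult$ with $f(x_1,\ldots,x_n)\in\ult$. Call a filter $\fil\subseteq A$ \emph{good} (for position $1$) if $x_1\in\fil$ and $f(a,x_2,\ldots,x_n)\in\ult$ for every $a\in\fil$. The principal filter $\upop x_1$ is good by the monotonicity of $f$ (which follows from (P1) and (P2)); and the family of good filters is closed under unions of chains, so by Zorn's lemma it has a maximal element $\ult_1$. The key step is to verify that $\ult_1$ is actually an ultrafilter. Suppose some $b\in A$ satisfies $b,-b\notin\ult_1$. Maximality forces $c_1,c_2\in\ult_1$ with $f(c_1\cdot b,x_2,\ldots,x_n)\notin\ult$ and $f(c_2\cdot(-b),x_2,\ldots,x_n)\notin\ult$; taking $c\defeq c_1\cdot c_2\in\ult_1$ and applying monotonicity lets us replace $c_i$ by $c$ in both, and then additivity (P2) gives
\[
f(c\cdot b,x_2,\ldots,x_n)+f(c\cdot(-b),x_2,\ldots,x_n)=f(c,x_2,\ldots,x_n).
\]
Since $\ult$ is a filter, the left-hand side is not in $\ult$, contradicting $c\in\ult_1$ and goodness of $\ult_1$.

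With $\ult_1$ in hand, I would iterate: replace the hypothesis $f(x_1,\ldots,x_n)\in\ult$ with the stronger statement that $f(a_1,x_2,\ldots,x_n)\in\ult$ for all $a_1\in\ult_1$, and repeat the Zorn argument at position $2$ to produce an ultrafilter $\ult_2\ni x_2$ with $f(a_1,a_2,x_3,\ldots,x_n)\in\ult$ for all $a_1\in\ult_1$ and $a_2\in\ult_2$; continue to positions $3,\ldots,n$. The main obstacle throughout is exactly the step showing that each maximal good filter is an ultrafilter---everything else is bookkeeping using (P1), (P2), and the monotonicity they entail. The outcome is ultrafilters $\ult_1,\ldots,\ult_n$ with $x_i\in\ult_i$ and $f[\ult_1\times\cdots\times\ult_n]\subseteq\ult$, which completes the $(\subseteq)$ inclusion and hence the proof that $h$ is an embedding into $\Cm^p(\Cf^p(\frA))$.
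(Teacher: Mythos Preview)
The paper does not give its own proof of this theorem; it merely cites it as Theorem~3.10 of J\'onsson and Tarski. Your argument is correct and is essentially the classical J\'onsson--Tarski proof: the $(\supseteq)$ direction is immediate, and for $(\subseteq)$ one builds the ultrafilters coordinate by coordinate via Zorn, using additivity (P2) together with the primeness of $\ult$ to show that each maximal ``good'' filter is an ultrafilter. One minor point worth making explicit in the inductive step is that at position $k\geq 2$ the witnesses from $\ult_1,\ldots,\ult_{k-1}$ produced for $b$ and for $-b$ need not coincide; intersecting them first (ultrafilters are closed under finite meets) before applying (P2) handles this, and as you say it is just bookkeeping.
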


The algebra $\Cm^p(\Cf^p(\frA))$ is called the \emph{possibility canonical extension} of $\frA$, denoted by $\Em^{p}(\frA)$. For details of the origin and theory of BAOs see the survey by \citet{jon93}.

Starting with an $n+1$-ary frame $\frF \defeq \klam{U,R}$ there is also a representation theorem:

\begin{theorem}\label{thm:appFrameRep}
$\frF$ can be embedded into the canonical frame of its full possibility complex algebra.
\end{theorem}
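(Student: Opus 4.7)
The plan is to exhibit an explicit map $\eta\colon U \to \Ult(2^U)$, show it is injective, and verify that it preserves and reflects the relation $R$, where $Q_{\langle R\rangle}$ is defined as in \eqref{def: cfp} from the operator $\langle R\rangle$ given by \eqref{def: possR}. The natural candidate is the \emph{principal ultrafilter map}
\[
\eta(u) \defeq \{X \subseteq U \mid u \in X\},
\]
which is well known to land in $\Ult(2^U)$. Injectivity is immediate: if $\eta(u) = \eta(v)$, then $\{u\} \in \eta(u) = \eta(v)$ forces $v = u$.

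The heart of the argument is the equivalence
\[
R(u_1,\ldots,u_{n+1}) \;\Longleftrightarrow\; Q_{\langle R\rangle}\bigl(\eta(u_1),\ldots,\eta(u_{n+1})\bigr).
\]
For the forward direction, I would assume $R(u_1,\ldots,u_{n+1})$ and pick arbitrary $X_i \in \eta(u_i)$ for $i \leqs n$; then each $u_i \in X_i$, so the definition of $\langle R\rangle$ immediately yields $u_{n+1} \in \langle R\rangle(X_1,\ldots,X_n)$, i.e., $\langle R\rangle(X_1,\ldots,X_n) \in \eta(u_{n+1})$. This gives the required inclusion $\langle R\rangle[\eta(u_1)\times\cdots\times\eta(u_n)] \subseteq \eta(u_{n+1})$.

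For the converse, the trick is to test the inclusion on the smallest possible sets: take $X_i \defeq \{u_i\}$, which manifestly lies in $\eta(u_i)$. Then $\langle R\rangle(\{u_1\},\ldots,\{u_n\}) \in \eta(u_{n+1})$ means $u_{n+1} \in \langle R\rangle(\{u_1\},\ldots,\{u_n\})$, and unwinding the definition of $\langle R\rangle$ gives witnesses $x_i \in \{u_i\}$ with $R(x_1,\ldots,x_n,u_{n+1})$; but $x_i = u_i$, so $R(u_1,\ldots,u_{n+1})$ as desired.

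There is no real obstacle here; the proof is mostly unpacking definitions, and the only mildly nontrivial move is the singleton-test in the backward direction. The result parallels Theorem~\ref{thm:repP} for algebras and is essentially a frame-side reflection of the Stone representation, so the proof has the same flavour as the classical Jónsson--Tarski construction.
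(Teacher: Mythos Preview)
Your proof is correct and follows essentially the same approach as the paper: the map $\eta$ is exactly the paper's $k(x)=\upop\{x\}$, and both directions of the equivalence are handled identically---using the $u_i$ themselves as witnesses for the forward implication and instantiating with the singletons $\{u_i\}$ for the converse.
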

\begin{proof}
This is a generalization of \cite[Theorem 3.2.7]{orr15}. Define $k\colon \frF \to \Ult(\Cm^p(\frF))$ by $k(x) \defeq \ua{\set{x}}$, that is, $k(x)$ is the principal filter of $2^U$ generated by $\set{x}$; clearly, $k$ is injective. We define $\poss{R}$ as in \eqref{def: possR}, and $Q_{\poss{R}}$ as in \eqref{def: cfp}.

Let $x_1, \ldots, x_{n+1} \in U$. Then:
{\small
\begin{align*}
Q_{\poss{R}}(k(x_1), \ldots, k(x_{n+1}))
&\iff \poss{R}\left[\ua{\set{x_1}}, \ldots, \ua{\set{x_n}}\right] \subseteq \ua{\set{x_{n+1}}} \\
&\iff (\forall X_i)\,(x_i \in X_i, 1 \leq i \leq n \rarrow \poss{R}(X_1, \ldots X_n) \in \ua{\set{x_{n+1}}}) \\
&\iff (\forall X_i)\,(x_i \in X_i, 1 \leq i \leq n \rarrow x_{n+1} \in \poss{R}(X_1, \ldots X_n)) \\
&\iff (\forall X_i)(x_i \in X_i, 1 \leq i \leq n \rarrow\\
&\phantom{(\forall X_i)(x_i \in X_i,}(\exists u_i \in X_i)R(u_1, \ldots, u_n,x_{n+1}))\,.
\end{align*}
}
If $R(x_1, \ldots, x_{k+1})$, we choose $u_i \df x_i$; this shows that $Q_{\poss{R}}(k(x_1), \ldots, k(x_{n+1}))$. Conversely, if $Q_{\poss{R}}(k(x_1), \ldots, k(x_{n+1}))$, choosing $X_i \defeq \set{x_i}$ shows that the tuple $\klam{x_1, \ldots, x_{k+1}}$ is in $R$.
\end{proof}

The relational semantics of classical modal logics is limited in expression since it can talk about (some) properties of a binary relation $R$ but not about properties of $-R$. A~``sufficiency'' counterpart of the modal necessity operator $\Box$ was independently suggested by \citet{Humberstone-IW} with $\blacksquare$ and \citet{gpt87} with $\win$.\footnote{
See also \citep{Goldblatt-SAOO} and \citep{vanBenthem-MDL} for related work.
}
The algebraic properties of such an operator and representation properties were investigated in a sequence of papers by \cite{do_mixalg,do_baro} and \citet*{dot_mixed}.

\begin{definition}\label{def: suffop}
 A mapping $g\colon A^n\to A$ is an \emph{$n$-ary sufficiency operator} \tiff $g$ meets the following two constraints:
     \begin{enumerate}[label=(S\arabic*),itemsep=0pt]
        \item If there is $i$ such that $1\leqslant i\leqslant n$ and $x_i=\zero$, then $g(x_1,\ldots,x_n)=\one$ (co-normality).
        \item If $\klam{x_1, \ldots, x_n}$ and $\klam{y_1, \ldots, y_n}$ are $n$-termed sequences in $A$ such that $x_i = y_i$ for all $i \neq k$, then $g(x_1,\ldots, x_k, \ldots,x_n) \cdot g(y_1, \ldots, y_k, \ldots,y_n)=g(x_1, \ldots,  x_k + y_k, \ldots, x_n)$ (co-additivity).
    \end{enumerate}
\end{definition}
Note that a sufficiency operator is antitone in each argument.

The pair $\frA \defeq \klam{A,g}$ is called a \emph{sufficiency algebra}. While unary possibility algebras are algebraic models of the logic $\mathsf{K}$, the unary sufficiency algebras are algebraic models of its counterpart $\mathsf{K^*}$ \citep{teh85}. The \emph{sufficiency canonical frame} is the system $\klam{\Ult(A), S}$ where $S$ is the $(n+1)$-ary relation on $\Ult(A)$ defined by
\begin{gather*}
S(\ult_1, \ldots, \ult_{n+1}) \iffdef g[\ult_1 \times \ldots \times \ult_n] \cap \ult_{n+1} \neq \z,
\end{gather*}
denoted by $\Cf^s(\frA)$. Conversely, If $\frF \defeq \klam{U,S}$ is an $(n+1)$-ary frame we define an $n$-ary operator $\suff{S}$ on $2^U$ by
\begin{gather*}
\suff{S}(X_1, \ldots, X_n) \defeq \set{u \in U\mid X_1 \times \ldots \times X_n \times \set{u} \subseteq S}.
\end{gather*}
The algebra $\klam{2^U, \suff{S}}$ is called the \emph{full sufficiency complex algebra} of $\frF$, denoted by $\Cm^s(\frF)$. Each subalgebra is called a \emph{sufficiency complex algebra over $\frF$}. If $\frF$ is understood we shall omit the reference to $\frF$. It is well known that $\suff{S}$ is a complete co-additive operator on $2^U$ \cite[Proposition 5]{do_mixalg}. In analogy to possibility algebras we have
\begin{theorem}[{\citealp{do_mixalg}}]\label{thm:repS}
\begin{enumerate}\itemsep0pt
\item If a mapping $g$ is an $n$-ary sufficiency operator on $\frA$, the Stone map $h\colon \frA \to 2^{\Ult(A)}$ is an embedding of $\frA$ into  $\Cm^s(\Cf^s(\frA))$.
\item  If $\frF \defeq \klam{U,S}$ is an $(n+1)$-ary frame the map $k\colon \frF \to \Ult(\Cm^s(\frF))$ such that $k(x) \defeq \ua{\set{x}}$ is an embedding.
\end{enumerate}
\end{theorem}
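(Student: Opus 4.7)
The plan is to verify the commutation identities that make $h$ and $k$ preserve the sufficiency structure. For part~(1), on top of Stone's theorem it suffices to check
\[
h(g(x_1,\dots,x_n)) = \suff{S}(h(x_1),\dots,h(x_n)).
\]
For part~(2), injectivity of $k$ is immediate because distinct points yield distinct principal ultrafilters, so the remaining task is the equivalence $S_{\suff{S}}(k(x_1),\dots,k(x_{n+1})) \Iff S(x_1,\dots,x_{n+1})$, where $S_{\suff{S}}$ is the sufficiency canonical relation of $\suff{S}$.

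In part~(1), the inclusion $h(g(x_1,\dots,x_n)) \subseteq \suff{S}(h(x_1),\dots,h(x_n))$ is immediate: if $g(x_1,\dots,x_n)\in\ult$ and $x_i\in\ult_i$, then $(x_1,\dots,x_n)$ itself witnesses $g[\ult_1\times\cdots\times\ult_n]\cap\ult\neq\z$. The reverse inclusion is the substantive step, to be handled contrapositively: given $g(x_1,\dots,x_n)\notin\ult$, I would produce ultrafilters $\ult_1,\dots,\ult_n$ with $x_i\in\ult_i$ and $g[\ult_1\times\cdots\times\ult_n]\cap\ult=\z$ by iterated extension. At stage $k$, with $\ult_1,\dots,\ult_{k-1}$ already fixed, set
\[
I_k \defeq \set{ y\in A \mid (\exists y_1\in\ult_1)\cdots(\exists y_{k-1}\in\ult_{k-1})\; g(y_1,\dots,y_{k-1},y,x_{k+1},\dots,x_n)\in\ult }.
\]
Co-normality supplies $\zero\in I_k$; antitonicity of $g$ (a consequence of co-additivity) gives downward closure; and closure under $+$ stems from meet-stability in each $\ult_i$---passing to a common witness $y_i\cdot z_i\in\ult_i$---combined with co-additivity in the $k$-th slot. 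Disjointness of $I_k$ from $\upop x_k$ uses the inductive invariant that $g(y_1,\dots,y_{k-1},x_k,x_{k+1},\dots,x_n)\notin\ult$ for every $y_1\in\ult_1,\dots,y_{k-1}\in\ult_{k-1}$, together with antitonicity in the $k$-th slot. The Boolean ultrafilter theorem then furnishes $\ult_k\supseteq\upop x_k$ disjoint from $I_k$, and after $n$ iterations we obtain the required tuple.

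Part~(2) is a direct specialisation: unfolding $S_{\suff{S}}(k(x_1),\dots,k(x_{n+1}))$ yields sets $X_i$ with $x_i\in X_i$ such that $X_1\times\cdots\times X_n\times\set{x_{n+1}}\subseteq S$; in particular the tuple $(x_1,\dots,x_{n+1})$ lies in that product, forcing $S(x_1,\dots,x_{n+1})$. Conversely, if $S(x_1,\dots,x_{n+1})$ holds, choosing $X_i\defeq\set{x_i}$ exhibits the required witnesses. The technical heart, and the main obstacle, lies in the iterative ideal construction in part~(1): the bookkeeping that combines co-additivity, antitonicity, and the filter property of the already-built $\ult_1,\dots,\ult_{k-1}$ into an ideal disjoint from $\upop x_k$ at every stage. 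Everything else is either routine or a specialisation of the possibility-operator proofs.
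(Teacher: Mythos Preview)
The paper does not supply its own proof of this theorem: it is stated with a citation to \citet{do_mixalg}, so there is nothing in the present text to compare against directly. That said, your argument is correct. The iterative ideal construction in part~(1) is the standard dualisation of the J\'onsson--Tarski argument for possibility operators, and your verification that each $I_k$ is an ideal disjoint from $\upop x_k$ (via co-normality, antitonicity, co-additivity, and the filter property of the already-built $\ult_i$) goes through as you describe; the inductive invariant is maintained precisely because $\ult_{k-1}\cap I_{k-1}=\z$ encodes the invariant at stage~$k$. For part~(2), your unfolding mirrors exactly the paper's proof of Theorem~\ref{thm:appFrameRep} (the possibility analogue): the forward direction specialises the witnessing sets to the singletons $\set{x_i}$, and the backward direction reads off membership of the tuple directly.
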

Any algebra $\frA \defeq\langle A,f,g\rangle$ such that $A$ is a BA and $f$ and $g$ are---respectively---a possibility and a sufficiency operator of the same arity will be called a \emph{Possibility--Sufficiency-algebra} (PS-algebra). Since the mappings $h$ and $k$ are the same as in Theorem  \ref{thm:repP} and Theorem \ref{thm:appFrameRep} we can define the PS--canonical frame of $\frA$ as $\klam{\Ult(A), Q_f, S_g}$ and denote it by $\Cf^{ps}(\frA)$. The algebra $\Cm^{ps}(\Cf^{ps}(\frA))$ is called the \emph{canonical extension of $\frA$}, denoted by $\Em^{ps}(\frA)$. The structure $\Cf^{ps}(\Cm^{ps}(\frF))$ is the \emph{canonical extension of $\frF$} denoted by $\Ce^{ps}(\frF)$.

From the outset, there is no connection between the possibility operator $f$ and the sufficiency operator $g$. To enhance the expressiveness of the combined corresponding logics, \citet{do_mixalg} introduced the class of \emph{mixed algebras} (MIAs) which are PS-algebras $\frA \df \klam{A,f,g}$ which satisfy the condition $Q_f = S_g$; in the unary case this is equivalent to
\begin{gather}\label{MIA}
    (\forall \ult_1,\ult_2 \in \Ult(A))
    \left(f[\ult_1] \subseteq \ult_2 \iff g[\ult_1] \cap \ult_2 \neq \z\right)\,.
\end{gather}
It was shown that the class of MIAs is not first order axiomatizable and that the canonical extension of a MIA  is isomorphic to the full complex algebra $\klam{2^U,\poss{R}, \suff{R}}$ of a frame. For an overview and examples of mixed algebras with unary operators see \cite[Section 3.6]{orr15}. Subsequently, \citet*{dot_mixed} introduced a first-order axiomatizable proper subclass of mixed algebras, called \emph{weak MIAs}, which satisfy in the unary case the axiom:
 \begin{equation}\label{wMIA}
    a \neq \zero \rarrow g(a) \leq f(a).
    \end{equation}
It turned out that the equational class generated by the weak MIAs are the algebraic models of the logic \Kt, presented by \citet{gpt87}. We shall see later that the axioms of betweenness relations can be algebraically expressed in weak MIAs, but not by possibility or sufficiency operators alone.

\section{A definition of betweenness}\label{sec:betweenness}

In the sequel, we will focus on relational systems $\langle U,B\rangle$ such that $B$ is a~ternary relation on a non-empty set~$U$. Such systems will be called \emph{3-frames}.

\begin{definition}
Let $\langle U,B\rangle$ be a 3-frame. $\Betw$ is called a \emph{betweenness relation} if it satisfies the following (universal) axioms:
\begin{gather*}
\Betw(a,a,a)\,,\tag{BT0}\label{BT0}\\
\Betw(a,b,c)\rarrow\Betw(c,b,a)\,,\tag{BT1}\label{BT1}\\
\Betw(a,b,c)\rarrow\Betw(a,a,b)\,,\tag{BT2}\label{BT2}\\
\Betw(a,b,c)\wedge\Betw(a,c,b)\rarrow b=c\,\tag{BT3}\label{BT3}\,.
\end{gather*}
Note that \eqref{BT0}--\eqref{BT2} are expanding in the sense that they require certain triples to be in a \br, while \eqref{BT3} is contracting, since it prohibits triples to be in $B$.
\QED
\end{definition}
\begin{definition}\label{def: weakbtw}
    A ternary relation $B$ is a \emph{weak betweenness} if it satisfies \eqref{BT0}--\eqref{BT2} and
    \begin{equation}\label{BTW}\tag{BTW}
    \Betw(a,b,a)\rarrow a=b\,.
\end{equation}\QED
\end{definition}

The following example shows that \eqref{BTW} is strictly weaker than \eqref{BT3}:

\begin{example}\label{ex:Wnot3}
 Set $U\defeq\{0,1,2\}$ and define
    \begin{multline*}
        \mathord{\Betw}\defeq \{\langle a,a,a\rangle\mid a\in U\}{}\cup\{\langle a,a,b\rangle\mid a,b\in U\}{}\cup{}\\
        \{\langle a,b,b\rangle\mid a,b\in U\}\cup\{\langle 0,1,2\rangle,\langle 2,1,0\rangle,\langle 0,2,1\rangle,\langle 1,2,0\rangle\}\,.\qedhere
    \end{multline*}
 Then, \eqref{BTW} is vacuously true, and $\klam{0,1,2}, \klam{0,2,1} \in B$.
\QED\end{example}
To show the difference consider the following condition:
\begin{equation}\tag{C}\label{C}
    \#\langle a,b,c\rangle\rarrow (\langle a,b,c\rangle\notin B\vee \langle a,c,b\rangle\notin B)\,.
\end{equation}
Here, $\#\klam{a,b,c}$ \tiff $\vert\set{a,b,c}\vert = 3$.

\pagebreak

\begin{proposition}\label{prop:B-is-weak-B}
Assume $B$ satisfies \eqref{BT2}. Then, $B$ satisfies \eqref{BT3} \tiff it satisfies \eqref{BTW} and \eqref{C}.
\end{proposition}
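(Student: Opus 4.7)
The plan is to prove the two implications separately; both are short and reduce to careful case analysis, with \eqref{BT2} playing an essential role only in the forward direction.

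For the forward direction I assume \eqref{BT3} (along with the standing \eqref{BT2}) and derive \eqref{BTW} and \eqref{C} in turn. To get \eqref{BTW}, given $\Betw(a,b,a)$ I first apply \eqref{BT2} to obtain $\Betw(a,a,b)$, and then instantiate \eqref{BT3} with $c\defeq a$, whose hypothesis $\Betw(a,b,a)\wedge\Betw(a,a,b)$ now holds, yielding $b=a$. To get \eqref{C}, I argue by contraposition: if $a,b,c$ are pairwise distinct yet both $\langle a,b,c\rangle$ and $\langle a,c,b\rangle$ belong to $B$, then \eqref{BT3} forces $b=c$, contradicting $\#\klam{a,b,c}$.

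For the backward direction I assume \eqref{BTW} and \eqref{C} and verify \eqref{BT3}. Given $\Betw(a,b,c)$ and $\Betw(a,c,b)$, I split cases according to $\vert\{a,b,c\}\vert$. The case $b=c$ is immediate; the case $\vert\{a,b,c\}\vert=3$ is ruled out by \eqref{C}. The two remaining situations are $a=b$ and $a=c$, which are symmetric. If $a=b$, then $\Betw(a,c,b)$ becomes $\Betw(a,c,a)$, and \eqref{BTW} gives $a=c$, so $b=a=c$; the case $a=c$ is analogous via $\Betw(a,b,a)$. In every case $b=c$ follows.

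The only even mildly delicate point is the use of \eqref{BT2} in the forward direction: without it, a single instance $\Betw(a,b,a)$ does not supply the second conjunct needed to trigger \eqref{BT3}. I do not anticipate any genuine obstacle beyond bookkeeping of the case split.
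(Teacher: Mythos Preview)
Your proof is correct and follows essentially the same route as the paper's: the forward direction uses \eqref{BT2} to turn $\Betw(a,b,a)$ into the second conjunct needed for \eqref{BT3}, and the backward direction is the same case split on which of $a,b,c$ coincide after \eqref{C} rules out the pairwise-distinct case. The only cosmetic difference is that you organize the case split by $\lvert\{a,b,c\}\rvert$ explicitly, whereas the paper simply lists the three possibilities $y=z$, $x=z$, $x=y$.
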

\begin{proof}
\aright  For \eqref{BTW} consider
\begin{gather*}
B(x,y,x) \overset{\eqref{BT2}}{\longrightarrow} B(x,x,y) \overset{\eqref{BT3}}{\longrightarrow} x = y.
\end{gather*}
  If $B(x,y,z)$ and $B(x,z,y)$, then $y = z$ by \eqref{BT3}, and thus, not $\#\klam{x,y,z}$.

  \aleft Suppose that $B(x,y,z)$ and $B(x,z,y)$; then, not $\#\klam{x,y,z}$ by \eqref{C}. If $y = z$, there is nothing more to show. If $x = z$, then $B(x,y,x)$, and $x = y$ by \eqref{BTW}, hence, $y = z$. Similarly, if $x = y$, then $B(x,z,x)$ and therefore $x = y = z$.
\end{proof}

\begin{definition}
$B$ is a \emph{strong betweenness} if $B$ meets the following stronger version of \eqref{BT2}:
\begin{equation}\tag{BT2s}\label{BT2s}
    B(a,a,b).
\end{equation}\QED
\end{definition}

Clearly, \eqref{BT2s} implies \eqref{BT2}. In the presence of symmetry in the form of \eqref{BT1}, the  axiom \eqref{BT2s} is equivalent to Tarski's axiom 12 from \citep{Tarski-Givant-TSG}.

The choice of the axioms is by no means arbitrary, but embodies what can be seen as the \emph{core} axioms for reflexive betweenness. Reflexivity in the form of \eqref{BT0} is equivalent to Axiom 13 of \citet{Tarski-Givant-TSG}. Symmetry, which is \eqref{BT1}, is taken as Postulate~A by \citet{Huntington-et-al-SOIPFB}\footnote{It must be said, though, that they work with the strict betweenness.} and Axiom 14 by \citeauthor{Tarski-Givant-TSG}. \eqref{BT2} in the presence of symmetry can be seen as a weakening of Tarski's reflexivity axiom (Axiom 12) and is one of the axioms for betweenness obtained from binary relations (see \citealp{Altwegg-ZADTGM}). \eqref{BT3} arises naturally in the context of---again---betweenness induced by binary relations and has a clear and natural geometric meaning. \eqref{BTW} is present in Tarski's system as Axiom 6.

For more on the motivation for the choice of \eqref{BT0}--\eqref{BT3} the reader is invited to consult \citep{du_between}.

\begin{definition}
    A pair $\frF\defeq\langle U,B\rangle$ such that $B\subseteq U^3$ and $B$ satisfies \eqref{BT0}--\eqref{BT3} will be called a \emph{betweenness frame} or just a \emph{b-frame}. If we replace \eqref{BT3} by \eqref{BTW} then $\frF$ is called a \emph{weak betweenness frame}, and in case \eqref{BT2s} is substituted for \eqref{BT2}, a~\emph{strong betweenness frame}.
\end{definition}

\section{Non-definability of betweenness relations}\label{sec:non-definability}

In the algebraic approach to betweenness, we are going to engage both possibility and sufficiency operators. This is justified by the fact that betweenness is neither possibility nor sufficiency axiomatic. We devote this section to the proofs of the aforementioned phenomena.

\subsection{Bounded and co-bounded morphisms}

The standard notion of a \emph{boun\-ded morphism} for binary relations has a natural generalization to $n$-ary ones. We restrict ourselves to $3$-frames since this is all we require.

\begin{definition}
    If $\langle U,R\rangle$ and $\langle V,S\rangle$ are 3-frames, then a mapping $f\colon U\to V$ is a \emph{bounded morphism} if
    \begin{enumerate}[label=(\arabic*),itemsep=0pt]
        \item if $R(x,y,z)$, then $S(f(x),f(y),f(z))$ (i.e., $f$ preserves $R$, i.e., satisfies the forth condition);
        \item if $S(f(w),x,y)$, then $(\exists u,v\in U)(f(u)=x\wedge f(v)=y\wedge R(w,u,v))$ (i.e., $f$ satisfies the back condition).\footnote{\citep[see p.\,140]{Blackburn-et-al-ML}}
    \end{enumerate}
    $f\colon U\to V$ is called a \emph{co-bounded morphism} if for all $x,y,z\in U$ and $t\in V$;
    \begin{enumerate}[label=(\alph*),itemsep=0pt]
        \item if $-R(x,y,z)$, then $-S(f(x),f(y),f(z))$ (i.e., $f$ preserves $-R$);
        \item if $-S(f(w),x,y)$, then $(\exists u,v\in U)(f(u)=x\wedge f(v)=y\wedge -R(w,u,v))$ (i.e., $f$~satisfies the back condition).\QED
    \end{enumerate}
\end{definition}

Since in the case of betweenness relations the middle argument plays a distinguished role, we allow ourselves to modify the definition accordingly when we need it without spelling it out explicitly.

The following two theorems are crucial for the sequel.

\begin{theorem}[{\citealp[Theorem 3]{Goldblatt-Thomason-ACIPML}}]\label{th:key-for-possibility}
    Let $\tau$ be a modal similarity type. A~first-order definable class of $\tau$ frames is possibility definable \tiff it is closed under taking bounded homomorphic images, generated subframes, disjoint unions, and reflects ultrafilter extensions.
\end{theorem}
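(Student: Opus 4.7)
This is the Goldblatt--Thomason characterization, and the natural route is via the J\'onsson--Tarski duality between BAOs and $\tau$-frames. The $(\Rightarrow)$ direction assembles standard preservation facts: modal validity transfers along surjective bounded morphisms, is inherited by generated subframes (modal satisfaction at a point depends only on reachable states), is preserved by disjoint unions (valuations split componentwise), and a frame $\frF$ and its ultrafilter extension $\Ce^p(\frF) \defeq \Cf^p(\Cm^p(\frF))$ validate the same modal formulas --- whence any modally definable class reflects ultrafilter extensions.

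For the hard direction, suppose $\mathcal{K}$ is first-order definable and satisfies the four closure conditions. Let $\Lambda \defeq \{\varphi \mid \mathcal{K} \forces \varphi\}$ and fix $\frF \forces \Lambda$; the goal is to conclude $\frF \in \mathcal{K}$. Modal formulas translate to Boolean equations in complex algebras, so $\Cm^p(\frF)$ satisfies every equation holding in $\mathrm{Alg}(\mathcal{K}) \defeq \{\Cm^p(\frH) \mid \frH \in \mathcal{K}\}$. By Birkhoff's HSP theorem there exist a family $\{\frH_i\}_{i \in I} \subseteq \mathcal{K}$, a subalgebra $A \hookrightarrow \prod_i \Cm^p(\frH_i)$, and a surjective homomorphism $h \colon A \twoheadrightarrow \Cm^p(\frF)$.

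The plan is then to dualize each of H, S, P in turn. The product $\prod_i \Cm^p(\frH_i)$ is canonically isomorphic to $\Cm^p(\bigsqcup_i \frH_i)$, and by closure under disjoint unions $\frU \defeq \bigsqcup_i \frH_i$ lies in $\mathcal{K}$. The embedding $A \hookrightarrow \Cm^p(\frU)$ dualizes, via J\'onsson--Tarski, to a surjective bounded morphism $\Ce^p(\frU) \twoheadrightarrow \Cf^p(A)$. Invoking first-order definability (hence closure under ultrapowers) together with the standard lemma that $\Ce^p(\frM)$ is a bounded morphic image of a sufficiently saturated ultrapower of $\frM$, one obtains $\Ce^p(\frU) \in \mathcal{K}$; a second application of closure under bounded morphic images yields $\Cf^p(A) \in \mathcal{K}$. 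Dually, the surjection $h$ produces an injective bounded morphism $\Ce^p(\frF) \hookrightarrow \Cf^p(A)$ whose image is a generated subframe, so closure under generated subframes places $\Ce^p(\frF)$ in $\mathcal{K}$. Reflection of ultrafilter extensions finally delivers $\frF \in \mathcal{K}$.

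\textbf{Main obstacle.} The crux is placing $\Ce^p(\frU)$ into $\mathcal{K}$: first-order definable classes are not in general closed under ultrafilter extensions, and the argument relies on the non-trivial observation that $\Ce^p(\frM)$ embeds as a bounded morphic image of a suitable ultrapower of $\frM$. Everything else is routine duality chasing --- products correspond to disjoint unions, BAO-embeddings to surjective bounded morphisms between ultrafilter extensions, and BAO-surjections to generated-subframe embeddings between ultrafilter extensions. The delicate point is precisely that the duals of subalgebras and quotients of complex algebras only produce bounded morphisms \emph{between ultrafilter extensions} rather than between the original frames, which is exactly why the fourth closure clause --- reflecting ultrafilter extensions --- is indispensable for threading $\frF$ back into $\mathcal{K}$.
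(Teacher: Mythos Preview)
The paper does not prove this theorem at all: it is quoted verbatim from \citet{Goldblatt-Thomason-ACIPML} and used as a black box to derive Theorem~\ref{thm:BnotP}. So there is no ``paper's own proof'' to compare against; your sketch is essentially the classical Goldblatt--Thomason argument via Birkhoff's HSP theorem and J\'onsson--Tarski duality, and the backward direction is laid out correctly, including the genuinely delicate step of placing $\Ce^p(\frU)$ in $\mathcal{K}$ via a saturated ultrapower.

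One inaccuracy in your forward direction is worth flagging. You write that ``a frame $\frF$ and its ultrafilter extension $\Ce^p(\frF)$ validate the same modal formulas''. That is false in general: validity need not transfer \emph{from} $\frF$ \emph{to} $\Ce^p(\frF)$ (the McKinsey formula is the standard witness --- this is exactly the phenomenon of non-canonicity). What \emph{is} true, and what suffices for reflection, is the one-way implication $\Ce^p(\frF)\Vdash\varphi \Rightarrow \frF\Vdash\varphi$, which follows because $\Cm^p(\frF)$ embeds into $\Em^p(\frF)=\Cm^p(\Ce^p(\frF))$ and equations pass to subalgebras. Replace the biconditional claim with this one-sided observation and the forward direction goes through cleanly.
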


\begin{theorem}[{\citealp[Section 5]{do_mixalg}}]\label{th:key-for-insufficiency}
    Let $\tau$ be a sufficiency similarity type. A first-order definable class of $\tau$ frames $\langle U,R\rangle$ is sufficiency definable \tiff the class of its complementary frames $\langle U,-R\rangle$ is possibility definable.
\end{theorem}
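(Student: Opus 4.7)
The plan is to exploit a syntactic translation grounded in the pointwise identity
\[
\suff{R}(X_1,\ldots,X_n) \;=\; -\poss{-R}(X_1,\ldots,X_n),
\]
which follows immediately from the definitions in Section~\ref{sec:bao}: a world $u$ lies outside $\poss{-R}(X_1,\ldots,X_n)$ iff no tuple in $X_1\times\cdots\times X_n\times\{u\}$ meets $-R$, i.e.\ $X_1\times\cdots\times X_n\times\{u\}\subseteq R$, which by definition is exactly $u\in\suff{R}(X_1,\ldots,X_n)$. So sufficiency on $\frF=\langle U,R\rangle$ and possibility on $\frF^c:=\langle U,-R\rangle$ differ only by an outer Boolean complement.

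Building on this, I would define two mutually inverse translations on formulas over the similarity type $\tau$. The map $\varphi\mapsto\varphi^\flat$ sending sufficiency formulas to possibility formulas is defined by induction: propositional variables and Boolean connectives are left alone, while each $\suff{R}(\varphi_1,\ldots,\varphi_n)$ is replaced by $\neg\poss{R}(\varphi_1^\flat,\ldots,\varphi_n^\flat)$. Dually, $\psi\mapsto\psi^\sharp$ sends $\poss{R}(\psi_1,\ldots,\psi_n)$ to $\neg\suff{R}(\psi_1^\sharp,\ldots,\psi_n^\sharp)$. A straightforward induction on formula complexity, with the pointwise identity handling the operator case, yields the bridging lemma: for every frame $\frF=\langle U,R\rangle$, valuation $V$ and world $u\in U$, the formula $\varphi$ is satisfied at $u$ in $\frF$ under the sufficiency reading iff $\varphi^\flat$ is satisfied at $u$ in $\frF^c$ under the possibility reading, and symmetrically for $\psi^\sharp$. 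In particular, validity in a frame is preserved: $\frF\Vdash\varphi$ iff $\frF^c\Vdash\varphi^\flat$.

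The theorem then drops out. If $\mathcal{K}$ is sufficiency-defined by $\Sigma$, then $\mathcal{K}^c:=\{\frF^c\mid\frF\in\mathcal{K}\}$ is possibility-defined by $\Sigma^\flat:=\{\sigma^\flat\mid\sigma\in\Sigma\}$; conversely, if $\mathcal{K}^c$ is possibility-defined by $\Pi$, then $\mathcal{K}$ is sufficiency-defined by $\Pi^\sharp$. The first-order definability hypothesis transfers between $\mathcal{K}$ and $\mathcal{K}^c$ without difficulty, because complementing a relation is a first-order operation (syntactically, replace every atomic $R(\bar{x})$ by $\neg R(\bar{x})$).

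The main obstacle is simply the bookkeeping for the bridging lemma: one has to keep track of which operator and which relation (or its complement) is in play at each inductive step, and confirm that $\flat$ and $\sharp$ are genuine mutual inverses modulo the semantic equivalences introduced by the double negations. Both points are routine consequences of the initial pointwise identity, so no deeper ingredient is needed.
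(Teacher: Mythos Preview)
The paper does not supply its own proof of this statement; it is quoted as a result from \citet[Section 5]{do_mixalg} and used as a black box. Your argument via the identity $\suff{R}(X_1,\ldots,X_n)=-\poss{-R}(X_1,\ldots,X_n)$ and the induced syntactic translations $\flat,\sharp$ is correct and is indeed the standard route to this equivalence; in fact your translation argument establishes the biconditional for \emph{arbitrary} classes of frames, so the first-order hypothesis is only needed to ensure that $\mathcal{K}$ and $\mathcal{K}^c$ are simultaneously first-order definable (which you note).
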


\subsection{Non-definability}

For the first of the two non-definability results (and for several examples in the sequel), we invoke relevant facts about betweenness relations obtained from binary ones. If $\langle U,R\rangle$ is a  binary frame, then it induces a <<natural>> ternary relation $B$ on $U$:
\[
B_R(x,y,z)\iffdef x\Rel y\Rel z\vee z\Rel y\Rel x\,.
\]

\noindent For these, we mention only one more definition and two basic facts:

\begin{definition}
A binary relation $\Rel$ on $U$ is called \emph{strongly antisymmetric} \tiff
\[
    x\Rel y\Rel z\Rel x\rarrow y=z.
\]
\QED\end{definition}

\begin{proposition}[{\citealp{Gruszczynski-Menchon-NOBP}}]\label{prop:B-from-SA}
    A reflexive $\Rel~\subseteq U^2$  is strongly antisymmetric if and only if $B_R$ is a betweenness relation.
\end{proposition}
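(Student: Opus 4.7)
The plan is to verify each of \eqref{BT0}--\eqref{BT3} for $B_R$ from reflexivity and strong antisymmetry of $R$, and then---conversely---to derive strong antisymmetry from \eqref{BT3} by carefully packaging a cycle $xRyRzRx$ into two instances of $B_R$.

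For the ``only if'' direction, assume $R$ is reflexive and strongly antisymmetric. \eqref{BT0} follows from $aRa$ by reflexivity, and \eqref{BT1} is immediate from the symmetric shape of the disjunction defining $B_R$. For \eqref{BT2}, if $B_R(a,b,c)$ then either $aRb$ or $bRa$ holds (depending on which disjunct obtains); combining with $aRa$ gives $aRaRb$ or $bRaRa$, whence $B_R(a,a,b)$.

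The core work is \eqref{BT3}. Suppose $B_R(a,b,c)$ and $B_R(a,c,b)$. There are four disjunctive cases. Two of them immediately produce an honest three-cycle $aRbRcRa$ (respectively $aRcRbRa$), so strong antisymmetry directly yields $b=c$. The two ``parallel'' cases, in which both disjuncts of $B_R$ pick the same orientation, leave us with configurations like $aRb$, $bRc$, $aRc$, $cRb$; here I close a cycle of the form $bRcRbRb$ by invoking reflexivity $bRb$, and strong antisymmetry then forces $c=b$. This use of reflexivity to ``pad'' a two-cycle into a three-cycle is the only mildly delicate step.

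For the ``if'' direction, assume $B_R$ is a betweenness relation and suppose $xRyRzRx$. From $xRyRz$ we get $B_R(x,y,z)$ by the first disjunct of the definition, and from $yRzRx$ we get $B_R(x,z,y)$ by the \emph{second} disjunct of the definition (reading the triple $(x,z,y)$ right-to-left as $yRzRx$). Applying \eqref{BT3} to these two instances yields $y=z$, so $R$ is strongly antisymmetric. Reflexivity of $R$ is given by hypothesis and is not needed at this step.

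The main obstacle, as indicated, is the case analysis for \eqref{BT3} in the forward direction; the remaining verifications are routine unfoldings of the definition of $B_R$.
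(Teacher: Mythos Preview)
The paper does not supply its own proof of this proposition; it is stated with a citation to \citep{Gruszczynski-Menchon-NOBP} and used as a black box. So there is nothing to compare your argument against here.

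That said, your proof is correct. The verifications of \eqref{BT0}--\eqref{BT2} are routine, and your four-case analysis for \eqref{BT3} is sound: the two ``crossed'' cases give genuine three-cycles $aRbRcRa$ or $aRcRbRa$, while in the two ``parallel'' cases you correctly observe that one is left with $bRc$ and $cRb$, and padding with reflexivity yields a cycle $bRcRbRb$ to which strong antisymmetry applies. The converse direction is exactly right: from $xRyRzRx$ you extract $B_R(x,y,z)$ via the first disjunct and $B_R(x,z,y)$ via the second, and \eqref{BT3} finishes it.
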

\begin{corollary}\label{prop:B-from-poset}
  If $R\subseteq U^2$ is a partial order relation, then $B_R$ is a betweenness relation.
\end{corollary}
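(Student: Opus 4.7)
The plan is to apply Proposition~\ref{prop:B-from-SA} directly: it suffices to verify that any partial order $R$ is both reflexive and strongly antisymmetric, since then $B_R$ is a betweenness relation by that proposition.

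Reflexivity is built into the definition of a partial order, so nothing needs to be done there. For strong antisymmetry, I would assume $x \Rel y \Rel z \Rel x$ and aim to conclude $y = z$. From $x \Rel y$ and $y \Rel z$, transitivity gives $x \Rel z$; combined with $z \Rel x$, antisymmetry yields $x = z$. Substituting back, we have $y \Rel z = x$ and $x \Rel y$, so antisymmetry again gives $x = y$. Hence $x = y = z$, and in particular $y = z$, as required.

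There is no real obstacle here, as the corollary is essentially an instantiation of Proposition~\ref{prop:B-from-SA} in the special case where the binary relation is a partial order; the only substantive point is the short chain of applications of transitivity and antisymmetry used to establish strong antisymmetry. The whole argument is a couple of lines.
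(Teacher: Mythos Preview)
Your proposal is correct and matches the paper's approach exactly: the corollary is stated without proof, as it follows immediately from Proposition~\ref{prop:B-from-SA} once one observes that a partial order is reflexive and strongly antisymmetric. Your verification of strong antisymmetry via transitivity and antisymmetry is the obvious (and only) thing to do.
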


Among the betweenness axioms, \eqref{BT0} and \eqref{BT1} have modal correspondents which is proved below in Theorem~\ref{th:full-complex-is-BTA}. However, in general, we have
\begin{theorem}[{\citealp{Duntsch-Orlowska-BS}}]\label{thm:BnotP}
The class of weak betweenness relations is not modal axiomatic.
\end{theorem}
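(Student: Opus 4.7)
The class of weak betweenness frames is first-order axiomatizable, namely by \eqref{BT0}--\eqref{BT2} together with \eqref{BTW}, so by Theorem~\ref{th:key-for-possibility} it suffices to show that this class is not closed under bounded homomorphic images. The strategy is to exhibit a surjective bounded morphism $f\colon\frF_1\onto\frF_2$ whose domain $\frF_1$ is a weak betweenness frame but whose codomain $\frF_2$ is not. The axiom to destroy in the image is \eqref{BTW}: I want to manufacture a palindromic triple $S(a,b,a)$ in $\frF_2$ with $a\ne b$, and the simplest way to do so is to take a genuine betweenness triple $B(x,y,z)$ in $\frF_1$ with $x\ne z$ and collapse $x$ and $z$ under $f$.

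Concretely, I would let $U\defeq\{0,1,2\}$ carry the natural linear order $0<1<2$, and take $B_1\defeq B_{\leqslant}$, which by Corollary~\ref{prop:B-from-poset} is a (weak) betweenness relation. The crucial feature is that $\klam{0,1,2}\in B_1$. I would then set $V\defeq\{\alpha,\beta\}$ and define $f\colon U\to V$ by $f(0)\defeq f(2)\defeq\alpha$, $f(1)\defeq\beta$, letting $S$ be the componentwise image $f[B_1]$. The forth condition then holds trivially by the definition of $S$, while $f[\klam{0,1,2}]=\klam{\alpha,\beta,\alpha}$ witnesses the failure of \eqref{BTW} in $\klam{V,S}$.

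The only step requiring care is the back condition: for each $w\in U$ and each triple $\klam{f(w),x,y}\in S$, I must locate $u,v\in U$ with $f(u)=x$, $f(v)=y$, and $\klam{w,u,v}\in B_1$. This is a finite check, and the only delicate case is the palindromic triple $\klam{\alpha,\beta,\alpha}\in S$, where for $w=0$ (respectively $w=2$) one must exhibit a $B_1$-triple starting at $0$ (resp.~$2$), with middle $1$ and last entry in $\{0,2\}$. The triples $\klam{0,1,2}$ and $\klam{2,1,0}$ in $B_1$ provide precisely these witnesses; all other cases fall out routinely from reflexivity \eqref{BT0} and the expansion axiom \eqref{BT2}. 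Once $f$ is confirmed to be a bounded morphism, non-definability follows immediately from Theorem~\ref{th:key-for-possibility}.
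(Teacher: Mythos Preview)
Your proposal is correct and follows the same overall strategy as the paper---invoke Theorem~\ref{th:key-for-possibility} and exhibit a surjective bounded morphism from a (weak) betweenness frame onto a frame violating \eqref{BTW}---but the concrete witnesses differ. The paper uses the infinite frame $\frZ=\klam{\Intgr,B_{\leqslant}}$ and collapses it onto the two-element frame with the \emph{universal} ternary relation via the parity map; the universality of the target relation makes the forth condition automatic, and the back condition is handled by the infinitude of even and odd integers on both sides of any point. Your construction is finitary: you take $\klam{\{0,1,2\},B_{\leqslant}}$ and collapse the endpoints $0,2$ to a single point, defining the target relation as the direct image. This yields a target relation that is \emph{not} universal (the triple $\klam{\beta,\alpha,\beta}$ is absent), so the back condition genuinely requires a case check---but since everything is finite, that check is mechanical and succeeds, as you outline. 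Your version is more elementary and arguably cleaner; the paper's version has the virtue that the target relation is trivial, so the bounded-morphism verification reduces entirely to the back condition without needing to compute the image. One small caution: the paper's operator $\poss{B}$ distinguishes the \emph{middle} coordinate, so the relevant back condition should place $f(w)$ in the middle slot rather than the first (cf.\ the paper's remark following the definition of bounded morphism); your map $f$ happens to satisfy the back condition under either convention, so the argument is unaffected.
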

\begin{proof}
    We are going to show that the class of weak betweenness relations is not closed under bounded morphisms. To this end, consider the set $\Intgr$ of all integers with the betweenness relation $B_{\leqslant}$ induced by the standard linear order $\leqslant$ on $\Intgr$. By Corollary~\ref{prop:B-from-poset} we have that $B_{\Intgr}$ is a~betweenness relation, and so, $\frZ\defeq\langle\Intgr,B_{\leqslant}\rangle$ is a b-frame. On the other hand, $\frF\defeq\langle\{w_0,w_1\},R\rangle$ where $w_0\neq w_1$ and $R\defeq\{w_0,w_1\}^3$ is not even a weak b-frame, as $R$ does not satisfy \eqref{BTW}.

    Let $f\colon\Intgr\to\{w_0,w_1\}$ be such that:
    \[
    f(x)\defeq\begin{cases}
      w_0,\quad&\text{if $x$ is even,}\\
      w_1,\quad&\text{if $x$ is odd.}
    \end{cases}
    \]
    Since $R$ is the universal ternary relation on $\{w_0,w_1\}$, $f$ preserves $B_{\leqslant}$, so to show that $f$ is indeed a bounded morphism all that is left is to prove that $f$ satisfies the back condition:
    \[
    R(u,f(x),v)\rarrow(\exists y,z\in\Intgr)\,(B_{\leqslant}(y,x,z)\wedge f(y)=u\wedge f(z)=v)\,.
    \]
    The proof will be done by cases:  Suppose $R(u,f(x),v)$. In case $f(x)=w_0$, we have that $x$ is even, and there are the following possibilities:
    \begin{enumerate}[label=(\arabic*),itemsep=0pt]
        \item $u=v=w_0$: Set $y\defeq x\eqdef z$. We have $B_{\leqslant}(x,x,x)$ and $f(y)=w_0$ and $f(z)=w_0$.
        \item $u=v=w_1$: Set $y\defeq x-1$ and $z\defeq x+1$. Thus $B_{\leqslant}(x-1,x,x+1)$ and $f(x-1)=w_1$ and $f(x+1)=w_1$.
        \item $u=w_0$ and $v=w_1$: Set $y\defeq x$ and $z\defeq x+1$. Thus $B_{\leqslant}(x,x,x+1)$ and $f(x)=w_0$ and $f(x+1)=w_1$.
        \item $u=w_1$ and $v=w_0$: Set $y\defeq x-1$ and $z\defeq x$. We have that $B_{\leqslant}(x-1,x,x)$ and $f(y)= w_1$ and $f(z)=w_0$.
    \end{enumerate}
    The proof for the case $f(x)=w_1$ is analogous.
\end{proof}

\begin{theorem}
    The class of betweenness relations is not sufficiency axiomatic.
\end{theorem}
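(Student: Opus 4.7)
The plan is to apply Theorem~\ref{th:key-for-insufficiency}, which reduces sufficiency axiomatizability of the class of b-frames to possibility axiomatizability of the \emph{complementary} class
\[
\calK^{*}\defeq\{\klam{U,-B}\mid\klam{U,B}\text{ is a b-frame}\}\,.
\]
Since \eqref{BT0}--\eqref{BT3} translate directly into first-order sentences about $-S$, the class $\calK^{*}$ is first-order definable. Consequently, by Theorem~\ref{th:key-for-possibility} (Goldblatt--Thomason), if $\calK^{*}$ were possibility definable it would have to be closed under disjoint unions, and my strategy is to pinpoint a concrete failure of this closure.

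I would first recall that for ternary frames the Goldblatt--Thomason disjoint union $\klam{U_1,S_1}\sqcup\klam{U_2,S_2}$ is $\klam{U_1\sqcup U_2,S}$ with $S$ containing only those triples that lie entirely inside a single summand. I would take the simplest possible ingredients, namely two copies of the one-point b-frame
\[
\frF_i\defeq\klam{\{a_i\},\{\klam{a_i,a_i,a_i}\}}\,,\qquad i\in\{1,2\}\,.
\]
Their complementary frames $\klam{\{a_i\},\emptyset}$ obviously lie in $\calK^{*}$, and since the only triples on a singleton are the diagonals, the Goldblatt--Thomason disjoint union is simply $\frF\defeq\klam{\{a_1,a_2\},\emptyset}$.

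Then I would argue $\frF\notin\calK^{*}$. If it were, the full cube $\{a_1,a_2\}^{3}$, being the complement of $\emptyset$, would have to be a betweenness relation; but this relation contains the triple $\klam{a_1,a_2,a_1}$, and by \eqref{BTW}---which follows from \eqref{BT3} via Proposition~\ref{prop:B-is-weak-B}---this forces $a_1=a_2$, a contradiction. Hence $\calK^{*}$ is not closed under disjoint unions, so by Theorem~\ref{th:key-for-possibility} it is not possibility definable, and Theorem~\ref{th:key-for-insufficiency} delivers the claim.

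The only subtlety, more bookkeeping than obstacle, is to verify that the Goldblatt--Thomason union of ternary frames indeed retains only within-component triples, so that after complementing inside the two-point union one recovers the \emph{entire} ternary cube $\{a_1,a_2\}^{3}$, which fatally contains the ``palindromic'' triple $\klam{a_1,a_2,a_1}$ violating \eqref{BTW}.
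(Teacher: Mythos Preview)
Your proof is correct but takes a different Goldblatt--Thomason closure condition than the paper. Both arguments reduce to Theorem~\ref{th:key-for-insufficiency} and then to Theorem~\ref{th:key-for-possibility}, but the paper checks closure under \emph{bounded morphic images} of the complementary class (equivalently, co-bounded morphisms of the original class): it exhibits a co-bounded surjection from the two-point b-frame $\klam{\{a,b\},\{\klam{a,a,a},\klam{b,b,b}\}}$ onto the one-point non-b-frame $\klam{\{x\},\emptyset}$, whose empty relation fails \eqref{BT0}. You instead check closure under \emph{disjoint unions}. Your route is arguably a shade more elementary---there is no back-and-forth condition to verify---and it locates the obstruction at the opposite axiom: the paper's counterexample violates \eqref{BT0}, yours violates \eqref{BT3} (via \eqref{BTW}). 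Either closure condition suffices for Goldblatt--Thomason, so both proofs are complete.
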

\begin{proof}
    In light of Theorem~\ref{th:key-for-insufficiency} it is enough to show that the class of complementary 3-frames for betweenness frames is not possibility definable. This is the same as showing that the class of betweenness frames is not closed under co-bounded morphisms. Thus, we exhibit two 3-frames $\langle U,R\rangle$ and $\langle V,S\rangle$ as well as a co-bounded surjective morphism $p\colon U\to V$ such that $\langle U,R\rangle$ is a betweenness frame and $\langle V,S\rangle$ is not.

    Let $U\defeq \{a,b\}$ and $R\defeq\{\langle a,a,a\rangle,\langle b,b,b\rangle\}$; then $R$ is a betweenness relation. Furthermore, set $V\defeq\{x\}$ and $S\defeq\emptyset$; then $-S=\{\langle x,x,x\rangle\}$ and $S$ is not a betweenness relation since it does not satisfy \eqref{BT0}.

    Obviously, there is a unique surjection $p\colon U\to V$ given by $p(a)\defeq x\eqdef p(b)$, and it is a bounded morphism between $\langle U,-R\rangle$ and $\langle V,-S\rangle$. It preserves $-R$ as it is constant, and it satisfies the back condition since if e.g., $-S(x,p(a),x)$, then $-R(b,a,b)$ and $f(b)=x$.
\end{proof}

The proof reflects the fact that reflexivity of a binary relation is not definable by a unary sufficiency operator.

\section{Complex algebras of b-frames}\label{sec:cmcf}

To motivate the choice of axioms for abstract algebras of betweenness we will focus on b-frames and their complex algebras. We are going to show that axioms of b-frames  correspond to algebraic properties of their complex algebras, and the latter will serve in the next section as <<role models>> for the axioms expressed within the framework of PS-algebras.

For a $3$-frame $\frF\defeq\klam{U,B}$ we define its complex operators by
\begin{align}
\poss{B}(X,Y)\defeq{}&\{u\in U\mid(\exists x\in X)(\exists y\in Y)\,B(x,u,y)\}\tag{$\dftt{\poss{B}}$}\\
={}& \set{u \in U\mid (X \times \set{u} \times Y) \cap B \neq \z} \notag \\
\suff{B}(X,Y)\defeq{}&\{u\in U\mid(\forall x\in X)(\forall y\in Y)\,B(x,u,y)\}\tag{$\dftt{\suff{B}}$}\\
={}& \set{u \in U\mid X \times \set{u} \times Y \subseteq B}. \nonumber
\end{align}
Thus, $\Cm^{ps}(\frF)=\langle 2^U,\poss{B},\suff{B}\rangle$ is the full complex algebra of $\frF$. We will prove that the following conditions for complex algebras correspond to relational axioms for betweenness:
\begin{gather*}
    X\subseteq\poss{B}(X,X)\tag{\ref{BT0}$^\mathfrak{c}$}\,,\label{BT0c}\\
    \poss{B}(X,Y)\subseteq\poss{B}(Y,X)\tag{\ref{BT1}$^\mathfrak{c}_f$}\,,\label{BT1c}\\
    \suff{B}(X,Y)\subseteq\suff{B}(Y,X)\,,\tag{\ref{BT1}$^\mathfrak{c}_g$}\label{BT1cg}\\
    Y\cap\poss{B}(X,Z)\subseteq\poss{B}(X\cap\poss{B}(X,Y),Z)\tag{\ref{BT2}$^\mathfrak{c}$}\,,\label{BT2c}\\
    \poss{B}(X,\suff{B}(X,-Y)\cap Y)\subseteq Y\,,\tag{\ref{BT3}$^\mathfrak{c}$}\label{BT3c}\\
       \suff{B}(X,X)\subseteq X\,,\ \text{for all $X\neq\emptyset$}\,,\tag{\ref{BTW}$^\mathfrak{c}$}\label{BTWc} \\
     \tag{\ref{BT2s}$^\mathfrak{c}$}\label{BT2sc}
    X\subseteq\poss{B}(X,Y)\,,\ \text{for all}\ Y\neq\emptyset\,.
\end{gather*}
Note that all of these are universal, so they hold in all subalgebras of $\Cm^{ps}(\frF)$.
\begin{theorem}\label{th:BTi-iff-BTic}
    Let $\frF\defeq\langle U,B\rangle$ be a 3-frame. Then,
    $\frF$ satisfies \textup{(BT$i$)} \tiff $\Cm^{ps}(\frF)$ satisfies \textup{(BT$i^\mathfrak{c}$)}, for any $i\in\{0,1_f,1_g,2,3,\mathrm{W},2\mathrm{s}\}$.
\end{theorem}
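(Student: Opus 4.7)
The plan is to verify each of the seven biconditionals separately by unfolding the definitions of $\poss{B}$ and $\suff{B}$. For every case the (\Larrow) direction is obtained by instantiating the sets $X$, $Y$, $Z$ with singletons built from the points appearing in the relational axiom, so that the algebraic inclusion collapses to exactly the desired implication on $B$; the (\Rarrow) direction is handled by picking witnesses for membership on the left-hand side of the algebraic inclusion, applying the relational axiom to the witnessing triple(s) in $B$, and repackaging the result.

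For \eqref{BT0}, the two guises \eqref{BT1c} and \eqref{BT1cg} of \eqref{BT1}, as well as \eqref{BTW} and \eqref{BT2s}, both directions are essentially one-line arguments. The two \eqref{BT1} variants are formally the same translation, the only nuance being that an existential witness for $\poss{B}$ and a universal verification for $\suff{B}$ both reduce to a single triple when $X$ and $Y$ are singletons. The case of \eqref{BT2} needs a little more bookkeeping: from $b \in Y \cap \poss{B}(X,Z)$ one picks $a \in X$, $c \in Z$ with $B(a,b,c)$, and \eqref{BT2} yields $B(a,a,b)$, which together with $b \in Y$ puts $a$ into $X \cap \poss{B}(X,Y)$ and hence places $b$ in $\poss{B}(X \cap \poss{B}(X,Y), Z)$; the converse specialises to $X \defeq \{a\}$, $Y \defeq \{b\}$, $Z \defeq \{c\}$ and traces the definitions.

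The main obstacle is \eqref{BT3}, whose algebraic form \eqref{BT3c} involves the complement $-Y$ and demands a careful choice of~$Y$. For (\Larrow), assume $B(a,b,c) \wedge B(a,c,b)$ and suppose towards contradiction that $b \neq c$; take $X \defeq \{a\}$ and $Y \defeq U \setminus \{b\}$, so $-Y = \{b\}$. Then $\suff{B}(X,-Y) = \{u \mid B(a,u,b)\}$ contains $c$ (by $B(a,c,b)$), and $c$ also lies in $Y$ because $c \neq b$; the triple $B(a,b,c)$ now witnesses $b \in \poss{B}(X, \suff{B}(X,-Y) \cap Y)$, so \eqref{BT3c} forces $b \in Y$, contradicting $b \in -Y$. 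For (\Rarrow), take $u \in \poss{B}(X, \suff{B}(X,-Y) \cap Y)$ with witnesses $x \in X$ and $v \in \suff{B}(X,-Y) \cap Y$ satisfying $B(x,u,v)$, and suppose for contradiction $u \in -Y$; the defining property of $v \in \suff{B}(X,-Y)$ applied to the pair $(x,u)$ yields $B(x,v,u)$, and \eqref{BT3} applied to $B(x,u,v) \wedge B(x,v,u)$ forces $u = v$, contradicting $v \in Y$ and $u \notin Y$. Hence $u \in Y$ and \eqref{BT3c} is established.
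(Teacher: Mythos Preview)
Your proof is correct and follows essentially the same approach as the paper's own proof: in every case the (\Rarrow) direction proceeds by picking witnesses and applying the relational axiom, and the (\Larrow) direction specialises to singletons, with the only nontrivial case \eqref{BT3c} handled by exactly the same choice of $X=\{a\}$ and $Y$ the complement of a singleton. The paper's treatment of the cases $i=2$ and $i=3$ is written out in more detail, but the argument is the same as yours.
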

\begin{proof}
($i=0$) Let $\frF$ satisfy \eqref{BT0} and take $x\in X$. Since $B(x,x,x)$, it is the case that $(X\times\{x\}\times X)\cap B\neq\emptyset$in consequence as required.

Conversely, given $x\in U$, we have $\{x\}\subseteq\poss{B}(\{x\},\{x\})$, which means that $B(x,x,x)$.

\smallskip

($i=1_f$) Suppose $\frF$ meets \eqref{BT1}. If $x\in\poss{B}(X,Y)$, then $(X\times\{x\}\times Y)\cap B\neq\emptyset$, and from \eqref{BT1} we obtain $(Y\times\{x\}\times X)\cap B\neq\emptyset$, i.e., $\poss{B}(Y,X)$.

For the reverse implication, suppose that $\frA$ satisfies \eqref{BT1c}. If $B(x,y,z)$, then $(\{x\}\times\{y\}\times\{z\})\cap B\neq\emptyset$, i.e., $y\in\poss{B}(\{x\},\{z\})$. By \eqref{BT1c} $\poss{B}(\{x\},\{z\})\subseteq\poss{B}(\{z\},\{x\})$, so $(\{z\}\times\{y\}\times\{x\})\cap B\neq\emptyset$, and so \eqref{BT1} holds for $\frF$.

\smallskip

($i=1_g$) If $z \in \suff{B}(X,Y)$, then $X\times\{z\}\times Y\subseteq B$. \eqref{BT1} implies that $Y\times\{z\}\times X\subseteq B$, and thus $z\in \suff{B}(Y,X)$.

\smallskip

Assume now that $\suff{B}(X,Y) \subseteq \suff{B}(Y,X)$ for all $X,Y \subseteq U$. If $B(x,z,y)$, then $z\in\suff{B}(\{x\},\{y\})$, and in consequence $z\in\suff{B}(\{y\},\{x\})$. Therefore $B(y,z,x)$.

($i=2$) Assume \eqref{BT2}, and let $y\in Y\cap\poss{B}(X,Z)$. This means that there are $x\in X$ and $z\in Z$ such that $B(x,y,z)$. By the axiom, $B(x,x,y)$, so $x\in\poss{B}(X,Y)$. Therefore,
\[\left[(X\cap\poss{B}(X,Y))\times\{y\}\times Z\right]\cap B\neq\emptyset\]
and in consequence we have
\[
y\in\poss{B}(X\cap\poss{B}(X,Y),Z)
\]
as required.

We now assume the inclusion $Y\cap\poss{B}(X,Z)\subseteq\poss{B}(X\cap\poss{B}(X,Y),Z)$ and that $B(x,y,z)$. Thus $y\in\{y\}\cap\poss{B}(\{x\},\{z\})$. Thus, it must be the case that:
\[
y\in\poss{B}(\{x\}\cap\poss{B}(\{x\},\{y\}),\{z\})
\]
which in particular means that
\[
\Bigl(\bigl[\{x\}\cap\poss{B}(\{x\},\{y\})\bigr]\times\{y\}\times\{z\}\Bigr)\cap B\neq\emptyset\,.
\]
On the other hand, this entails that $\{x\}\cap\poss{B}(\{x\},\{y\})\neq\emptyset$, and thus $B(x,x,y)$.\footnote{We would like to thank S{\o}ren Brinck Knudstorp, whose help was crucial to discover this inclusion.}

\smallskip

($i=3$) Assume \eqref{BT3} for $\frF$. Let $z\in\poss{B}(X,\suff{B}(X,-Y)\cap Y)$. Then, there are $x\in X$ and $a\in \suff{B}(X,-Y)\cap Y$ such that $B(x,z,a)$. Since $a$ is in $\suff{B}(X,-Y)$, for all $x'\in X$ and $b\notin Y$ it is the case that $B(x',a,b)$. If $z\notin Y$, then $B(x,a,z)$ and so \eqref{BT3} entails that $z=a$ and $z\in Y$. Therefore $z\in Y$ as required.

Let $\frA$ satisfy \eqref{BT3c}. Assume $B(x,y,a)$, $B(x,a,y)$ and $a\neq y$. Take $X\defeq\{x\}$ and $Y\defeq-\{a\}$. Since $\{x\}\times\{y\}\times\{a\}\subseteq B$ we have that $y\in\suff{B}(\{x\},\{a\})\cap-\{a\}$. Thus,
\[
a\in\poss{B}\left(\{x\},\suff{B}(\{x\},\{a\})\cap-\{a\}\right)\,.
\]
This yields a contradiction, as \eqref{BT3c} entails that $a\in-\{a\}$.

\smallskip

($i=\mathrm{W}$) Let $\frF$ satisfy \eqref{BTW}. If $y\in\suff{B}(X,X)$, then $X\times\{y\}\times X\subseteq B$. But there is $x\in X$, so $B(x,y,x)$ and by the axiom assumed, we have that $x=y$, i.e., $y\in X$.

If \eqref{BTWc} holds for $\frA$ and $B(x,y,x)$, then $\{x\}\times\{y\}\times\{x\}\subseteq B$, so $y\in\suff{B}(\{x\},\{x\})$ and in consequence $y\in\{x\}$, as required.

\smallskip

($i=2\mathrm{s}$)   Assume $B(a,a,b)$. Let $x\in X$. Since there is $y\in Y$ and $B(x,x,y)$ by the assumption, we have that $x\in\poss{B}(X,Y)$, as required.

For the reverse implication, $\{x\}\subseteq\poss{B}(\{x\},\{y\})$  for any $x$ and $y$, therefore, we have that $B(x,x,y)$.
\end{proof}

Complex algebras satisfy a property which connects $\poss{B}$ and $\suff{B}$ and which will become important in the next section:

\begin{lemma}\label{lem:g2f2Cm}
If $ X,Y \neq \z$, then
\begin{equation}
\suff{B}(X,Y) \subseteq \poss{B}(X,Y)
\tag{\ref{g2f2}$^\mathfrak{c}$}\,. \label{dagc}
\end{equation}
\end{lemma}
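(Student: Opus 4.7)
The proof is essentially immediate from the definitions, and the nonemptiness hypothesis is exactly what bridges the universal quantifiers of $\suff{B}$ and the existential quantifiers of $\poss{B}$. The plan is to pick an arbitrary element $u \in \suff{B}(X,Y)$ and use witnesses from $X$ and $Y$ (which exist precisely because these sets are nonempty) to verify the existential condition defining $\poss{B}(X,Y)$.

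More concretely, I would proceed as follows. Let $u \in \suff{B}(X,Y)$, so that by definition $X \times \{u\} \times Y \subseteq B$. Since $X \neq \emptyset$ and $Y \neq \emptyset$, there exist some $x_0 \in X$ and $y_0 \in Y$, and for this particular pair we obtain $B(x_0, u, y_0)$. But this witnesses that $(X \times \{u\} \times Y) \cap B \neq \emptyset$, i.e., $u \in \poss{B}(X,Y)$, as required.

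There is essentially no obstacle here; the only thing worth flagging is the necessity of the nonemptiness hypothesis, since if either $X$ or $Y$ were empty then $\suff{B}(X,Y)$ would collapse to the whole of $U$ (vacuously satisfying the universal condition) while $\poss{B}(X,Y)$ would be empty (as no witness exists), and the inclusion would fail. This mirrors exactly the contrast between the co-normality axiom (S1) and the normality axiom (P1) from Definitions \ref{def: modop} and \ref{def: suffop}, and it foreshadows why, at the abstract algebraic level, the link between $f$ and $g$ of the form \eqref{wMIA} must be conditioned on the argument being nonzero.
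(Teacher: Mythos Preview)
Your proof is correct and essentially identical to the paper's: pick $u \in \suff{B}(X,Y)$, use nonemptiness of $X$ and $Y$ to extract witnesses $x_0, y_0$, and conclude $u \in \poss{B}(X,Y)$. The additional remark on why the nonemptiness hypothesis cannot be dropped is accurate and goes slightly beyond what the paper records here.
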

\begin{proof}
Suppose that $X, Y \neq \z$,  and that $u \in \suff{B}(X,Y)$; then, $X \times \set{u} \times Y \subseteq B$ by definition of $\suff{B}$. Since $X,Y \neq \z$, there are  $x \in X$, $y \in Y$ with $B(x,u,y)$. This shows that $u \in \poss{B}(X,Y)$.
\end{proof}




Let us observe that the betweenness relation of a b-frame $\frF$ can be characterized by means of the sufficiency operator of the full complex algebra of $\frF$:
\begin{proposition}
    If $\frF\defeq\langle U,B\rangle$ is a betweenness frame and $\Cm^{ps}(\frF)$ is its full complex betweenness algebra, then\/\textup{:}
    \[
        B=\bigcup_{\langle X,Y\rangle\in 2^U\times 2^U}X\times\suff{B}(X,Y)\times Y\,.
    \]
\end{proposition}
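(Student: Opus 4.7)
The plan is to prove the two set-theoretic inclusions separately, each by unwinding the definition of $\suff{B}$.

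For the inclusion $\supseteq$, I would take an arbitrary triple $\langle a,b,c\rangle$ in the right-hand side, so that $a\in X$, $b\in\suff{B}(X,Y)$ and $c\in Y$ for some $X,Y\subseteq U$. By the defining equation of $\suff{B}$, the condition $b\in\suff{B}(X,Y)$ says exactly that $X\times\{b\}\times Y\subseteq B$, and specializing to the element $\langle a,b,c\rangle$ of that product yields $B(a,b,c)$.

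For the inclusion $\subseteq$, suppose $B(a,b,c)$. The natural move is to take the singleton witnesses $X\defeq\{a\}$ and $Y\defeq\{c\}$. Then $\{a\}\times\{b\}\times\{c\}\subseteq B$ is just a rephrasing of $B(a,b,c)$, so $b\in\suff{B}(\{a\},\{c\})$ by definition. Hence $\langle a,b,c\rangle\in\{a\}\times\suff{B}(\{a\},\{c\})\times\{c\}$, which is one of the summands of the union.

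Since both inclusions are essentially definitional, there is no real obstacle; the statement is actually valid for any $3$-frame, with no use of the betweenness axioms \eqref{BT0}--\eqref{BT3}. The proposition can thus be seen as a general completeness-style observation: the sufficiency operator $\suff{B}$ of the full complex algebra retains enough information to reconstruct $B$ pointwise, via singleton witnesses.
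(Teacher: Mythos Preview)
Your proof is correct and matches the paper's argument essentially line for line: both directions are handled by unwinding the definition of $\suff{B}$, with singleton witnesses $X=\{a\}$, $Y=\{c\}$ for the $\subseteq$ inclusion. Your added remark that the betweenness axioms play no role is also accurate.
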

\begin{proof}
($\subseteq$) If $B(x,y,z)$, then $y\in\suff{B}(\{x\},\{z\})$, hence,
\[
\langle x,y,z\rangle\in \{x\}\times \suff{B}(\{x\},\{z\})\times\{z\}\,.
\]

\smallskip

($\supseteq$) Now assume that $X$ and $Y$ are subsets of $U$ such that
\[
\langle x,z,y\rangle\in X\times\suff{B}(X,Y)\times Y\,.
\]
Therefore, $x\in X$, $y\in Y$ and $z\in\suff{B}(X,Y)$. From the last condition we obtain
\[
X\times\{z\}\times Y\subseteq B\,,
\]
which implies $B(x,z,y)$.
\end{proof}

\section{Betweenness algebras}\label{sec:BetAlg}

Theorem~\ref{th:BTi-iff-BTic} motivates our choice of axioms towards an abstract algebraization of betweenness, i.e., we translate the counterparts of the betweenness axioms for 3-frames in the <<obvious>> way:

\begin{definition}
    A PS-algebra $\frA\defeq\langle A,f,g\rangle$ is a \emph{betweenness algebra} (\emph{b-algebra} for short) if $\frA$ satisfies the following axioms:
    \begin{gather}
        x\leq f(x,x)\,,\tag{ABT0}\label{ABT0}\\
        f(x,y)\leq f(y,x)\,,\tag{ABT1$_f$}\label{ABT1}\\
         g(x,y) \leq  g(y,x)\,,\tag{ABT1$_g$}\label{ABT1g}\\
        y\cdot f(x,z)\leq f(x\cdot f(x,y),z)\,,\tag{ABT2}\label{ABT2}\\
        f(x,g(x,-y)\cdot y)\leq y\,,\tag{ABT3}\label{ABT3}
     \end{gather}
    and
    \begin{equation}
    x \neq \zero \tand y \neq \zero \rarrow g(x,y) \leq f(x,y). \tag{wMIA}\label{g2f2}
    \end{equation}\QED
    \end{definition}
%

\pagebreak

It can be shown that \eqref{g2f2} is independent of the other axioms. Moreover, let us observe that \eqref{ABT0} is equivalent to
\[
x\cdot y\leq f(x,y).
\]
\begin{proof}
    From \eqref{ABT0} we obtain $x\cdot y\leq f(x\cdot y,x\cdot y)$, and so monotonicity of $f$ entails that $x\cdot y\leq f(x,y)$. 
\end{proof}

The following example shows that \eqref{ABT1}  and \eqref{ABT1g} are independent of each other:

\begin{example}
Consider $\frA_1 \defeq \klam{A,f_1,g_1}$ and $\frA_2\defeq \klam{A,f_2,g_2}$ where $A$ is the four element Boolean algebra with atoms $a,b$, and $f_1, f_2$ and $g_1,g_2$ are given by the tables below:
\[
 \begin{array}{c|cccccc|cccc}
f_1 & \zero & a & b & \one &&g_1& \zero & a & b &\one    \\ \hline
\zero & \zero & \zero &\zero &\zero  && \zero & \one & \one & \one & \one \\
a & \zero & \one & \one & \one && a & \one & \zero & b & \zero\\
b &\zero &\one & \one &\one && b &\one &\zero &b &\zero\\
\one & \zero &\one &\one &\one && \one &\one &\zero &b &\zero
 \end{array}\qquad\quad
 \begin{array}{c|cccccc|cccc}
f_2 & \zero & a & b & \one & &g_2& \zero & a & b &\one    \\ \hline
\zero & \zero & \zero &\zero &\zero  & & \zero & \one & \one & \one & \one \\
a & \zero & a & a & a &  & a & \one & \zero & \zero & \zero\\
b &\zero &\one & \one &\one &  & b &\one &\zero &\zero &\zero\\
\one & \zero &\one &\one &\one &  & \one &\one &\zero &\zero &\zero
 \end{array}
\]
We can see that $f_1(x,y)=f_1(y,x)$  for all $x$ and $y$, but $g_1(a,b) \neq g_1(b,a)$ in $\frA_1$, and $g_2(x,y) = g_2(y,x)$ but $f_2(a,b) \neq f_2(b,a)$ in $\frA_2$. This example justifies the inclusion of both \eqref{ABT1} and \eqref{ABT1g} in the set of postulates for betweenness algebras. Both axioms are counterparts of the symmetry axiom \eqref{BT1}. \QED
\end{example}

\begin{definition}
    If $\frA$ satisfies \eqref{ABT0}--\eqref{ABT2} and
    \begin{equation}
        a\neq\zero\rarrow g(a,a)\leq a\,,\tag{ABTW}\label{ABTW}
    \end{equation}
    then $\frA$ is a \emph{weak betweenness algebra}. If $\frA$ satisfies \eqref{ABT1}, \eqref{ABT3}, \eqref{g2f2} and
    \begin{equation}\tag{ABT2$^\mathrm{s}$}\label{ABT2s}
    b\neq\zero\rarrow a\leq f(a,b)\,,
    \end{equation}
    then it will be called a \emph{strong betweenness algebra}.\QED
\end{definition}

\begin{proposition}\label{prop:b-algebra-is-weak-b}
    Every betweenness algebra is a weak betweenness algebra.
\end{proposition}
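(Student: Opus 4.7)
The plan is to derive axiom \eqref{ABTW}, namely $a \neq \zero \rarrow g(a,a) \leq a$, from the axioms \eqref{ABT2}, \eqref{ABT3} and \eqref{g2f2} that a betweenness algebra satisfies. The strategy mirrors the frame-level observation that \eqref{BTW} follows from \eqref{BT2} and \eqref{BT3}: in a b-frame, $B(x,y,x)$ forces $B(x,x,y)$ via \eqref{BT2}, and then \eqref{BT3} applied to $B(x,x,y)$ and $B(x,y,x)$ yields $x=y$. We lift this combinatorial chain to an order-theoretic contradiction.

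Fix $a \neq \zero$ and set $c \defeq g(a,a) \cdot -a$. It suffices to show $c = \zero$, so assume toward a contradiction that $c \neq \zero$. The plan proceeds in three short steps.

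First, instantiate \eqref{ABT3} with $x \defeq a$ and $y \defeq -a$. A direct substitution turns $g(x,-y) \cdot y$ into $g(a,a) \cdot -a = c$, so \eqref{ABT3} gives $f(a,c) \leq -a$, whence $a \cdot f(a,c) = \zero$. Second, instantiate \eqref{ABT2} with $x \defeq a$, $y \defeq c$ and $z \defeq a$; this yields $c \cdot f(a,a) \leq f(a \cdot f(a,c), a)$, and since $a \cdot f(a,c) = \zero$ from the first step, normality (P1) of~$f$ collapses the right-hand side to $\zero$. Hence $c \cdot f(a,a) = \zero$, i.e., $c \leq -f(a,a)$. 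Third, apply \eqref{g2f2} with both arguments equal to $a$ (nonzero by hypothesis) to obtain $g(a,a) \leq f(a,a)$, so $c \leq g(a,a) \leq f(a,a)$. Combining with the previous inequality, $c \leq f(a,a) \cdot -f(a,a) = \zero$, contradicting $c \neq \zero$.

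The main obstacle is locating the correct substitutions: the apparent complexity of \eqref{ABT3} with the subterm $g(x,-y) \cdot y$ hides the fact that the choice $y \defeq -a$ produces exactly the candidate obstruction $c = g(a,a) \cdot -a$, after which \eqref{ABT2} converts the disjointness $a \cdot f(a,c) = \zero$ into disjointness of $c$ from $f(a,a)$; the latter is then contradicted by the upper bound on $g(a,a)$ supplied by \eqref{g2f2}. Once these instantiations are identified, the remaining manipulations are routine Boolean and normality arguments.
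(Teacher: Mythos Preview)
Your proof is correct and follows essentially the same route as the paper: instantiate \eqref{ABT3} with $y\defeq -a$ to get $f(a,g(a,a)\cdot -a)\leq -a$, feed this into \eqref{ABT2} to obtain $(g(a,a)\cdot -a)\cdot f(a,a)=\zero$, and finish with \eqref{g2f2}. The only difference is cosmetic: you introduce the abbreviation $c$ and phrase the argument as a contradiction, though the hypothesis $c\neq\zero$ is never actually used and the chain of inequalities already yields $c=\zero$ directly.
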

\begin{proof}
    Let $x\neq \zero$; we will show that $g(x,x)\leq x$. It follows from \eqref{ABT3} that $f(x,g(x,x)\cdot -x)\leq -x$, and by \eqref{ABT2} we have
    \[
    -x\cdot g(x,x)\cdot f(x,x)\leq f\bigl(x\cdot f(x,-x\cdot g(x,x)),x\bigr)\leq f(x\cdot -x,x)=\zero\,.
    \]
    Thus, $-x\cdot g(x,x)\cdot f(x,x)=\zero$. Since $x\neq\zero$ by the assumption, we apply \eqref{g2f2} to obtain that $g(x,x)\leq f(x,x)$. Thus $-x\cdot g(x,x)=\zero$, i.e.,  $g(x,x)\leq x$.
\end{proof}

\begin{proposition}
    \eqref{ABT2s} entails both \eqref{ABT0} and \eqref{ABT2}, therefore, every strong betweenness algebra is a betweenness algebra.
\end{proposition}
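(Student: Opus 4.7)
The plan is to derive \eqref{ABT0} and \eqref{ABT2} from \eqref{ABT2s} by a short case split on whether the relevant variables are $\zero$ (since \eqref{ABT2s} carries a non\hyp{}degeneracy premise). Once these two inequalities are in hand, the implication ``every strong betweenness algebra is a betweenness algebra'' follows immediately, because \eqref{ABT1}, \eqref{ABT3}, and \eqref{g2f2} already figure explicitly in the definition of a strong betweenness algebra.

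For \eqref{ABT0}, I would split on whether $x = \zero$. In that case normality~(P1) of $f$ yields $f(x,x) = \zero$ and the inequality is trivial. Otherwise I would instantiate \eqref{ABT2s} with $a \defeq x$ and $b \defeq x$; the hypothesis $b \neq \zero$ is met and the conclusion is literally $x \leq f(x,x)$.

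For \eqref{ABT2}, the target is $y \cdot f(x,z) \leq f(x \cdot f(x,y), z)$. If $y = \zero$, normality collapses $f(x,y)$ to $\zero$ and the left-hand side is $\zero$, so there is nothing to check. If $y \neq \zero$, then \eqref{ABT2s} with $a \defeq x$ and $b \defeq y$ delivers $x \leq f(x,y)$, so $x \cdot f(x,y) = x$ and the right-hand side simplifies to $f(x,z)$; the inequality reduces to the Boolean absorption $y \cdot f(x,z) \leq f(x,z)$.

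I do not expect any real obstacle here: the argument rests only on normality of $f$ and elementary Boolean identities, and the only point that needs care is the separation into zero and non-zero cases, which is forced by the non-degeneracy premise of \eqref{ABT2s}. The entire formal write-up should occupy just a few lines.
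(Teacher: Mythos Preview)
Your proposal is correct and matches the paper's proof essentially line for line: the same case splits on $x=\zero$ for \eqref{ABT0} and on $y=\zero$ for \eqref{ABT2}, followed by the observation that \eqref{ABT2s} yields $x\leq f(x,y)$ so that $x\cdot f(x,y)=x$. The only cosmetic difference is that in the $y=\zero$ case you invoke normality of $f$ on $f(x,y)$, whereas it suffices (as the paper does) to note that $y\cdot f(x,z)=\zero$ directly.
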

\begin{proof}
\eqref{ABT0}: If $x=\zero$, then immediately $x\leq f(x,x)$. If $x\neq\zero$, then from the assumption we obtain $x\leq f(x,x)$.

\smallskip

\eqref{ABT2}: Fix arbitrary $x, y$ and $z$. We have two possibilities, either $y=\zero$ or $y\neq\zero$. In the former, we have $\zero=y\cdot f(x,z)\leq f(x\cdot f(x,y),z)$. In the latter, directly from \eqref{ABT2s} we obtain $x=x\cdot f(x,y)$ so $y\cdot f(x,z)\leq f(x,z)= f(x\cdot f(x,y),z)$.
\end{proof}

From Lemma~\ref{lem:g2f2Cm} and Theorem~\ref{th:BTi-iff-BTic} we see that the axioms hold in complex algebras of b-frames:
\begin{theorem}\label{th:full-complex-is-BTA}
    If $\frF$ is a (weak, strong) betweenness frame, then $\Cm^{ps}(\frF)$ is a (weak, strong) betweenness algebra.
\end{theorem}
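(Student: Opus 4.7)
The plan is essentially to unpack the claim into its constituent axioms and apply the correspondence results already in hand. Given a b-frame $\frF = \langle U, B\rangle$, the full complex algebra $\Cm^{ps}(\frF) = \langle 2^U, \poss{B}, \suff{B}\rangle$ is a PS-algebra (the operator $\poss{B}$ is a complete possibility operator by the general theory of \citet{jt51}, and $\suff{B}$ is a complete sufficiency operator by \citealp{do_mixalg}), so the only thing to verify is the list of inequalities \eqref{ABT0}--\eqref{ABT3} together with \eqref{g2f2}, and their weak and strong counterparts.

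For \eqref{ABT0}, \eqref{ABT1}, \eqref{ABT1g}, \eqref{ABT2} and \eqref{ABT3}, I would simply invoke Theorem~\ref{th:BTi-iff-BTic}: under the interpretation $f \defeq \poss{B}$, $g \defeq \suff{B}$ and $x,y,z \in A = 2^U$, the algebraic axioms \eqref{ABT0}--\eqref{ABT3} are literally the set-theoretic conditions \eqref{BT0c}--\eqref{BT3c}, and each of these follows from the corresponding frame axiom \eqref{BT0}--\eqref{BT3} by that theorem. For the mixed-algebra axiom \eqref{g2f2}, note that when $X, Y \in 2^U$ are both non-zero (i.e.\ non-empty), Lemma~\ref{lem:g2f2Cm} gives exactly $\suff{B}(X,Y) \subseteq \poss{B}(X,Y)$, which translates to $g(x,y) \leq f(x,y)$. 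Thus $\Cm^{ps}(\frF)$ is a betweenness algebra.

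For the weak case, replace the use of the \eqref{BT3}$\leftrightarrow$\eqref{BT3c} equivalence by the \eqref{BTW}$\leftrightarrow$\eqref{BTWc} equivalence (again from Theorem~\ref{th:BTi-iff-BTic}); the condition \eqref{BTWc} reads $\suff{B}(X,X) \subseteq X$ for $X \neq \emptyset$, which is \eqref{ABTW} in the complex setting. For the strong case, analogously use the \eqref{BT2s}$\leftrightarrow$\eqref{BT2sc} part of Theorem~\ref{th:BTi-iff-BTic}, yielding $X \subseteq \poss{B}(X,Y)$ whenever $Y \neq \emptyset$, which is exactly \eqref{ABT2s}; the axioms \eqref{ABT1} and \eqref{ABT3} already follow from the weak/standard case, as does \eqref{g2f2}.

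The only genuine content is therefore hidden in Theorem~\ref{th:BTi-iff-BTic} and Lemma~\ref{lem:g2f2Cm}, both of which are already available. There is no real obstacle; the proof is a one-line check per axiom. The only place requiring a hint of care is the non-emptiness side condition in \eqref{g2f2} and \eqref{ABTW}/\eqref{ABT2s}: one must make sure the frame-level arguments in Theorem~\ref{th:BTi-iff-BTic} that appeal to picking a witness $x \in X$ (respectively $y \in Y$) match the algebraic hypothesis $x \neq \zero$ (respectively $y \neq \zero$), which they do under the identification $\zero = \emptyset$ in $2^U$.
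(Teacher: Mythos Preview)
Your proposal is correct and follows precisely the paper's own approach: the paper proves this theorem in one line by citing Lemma~\ref{lem:g2f2Cm} and Theorem~\ref{th:BTi-iff-BTic}, exactly as you do. Your additional remarks on matching the non-emptiness side conditions are accurate and harmless elaborations of what the paper leaves implicit.
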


The definition of sufficiency operator implies that $g(\zero,a) = g(a,\zero) = \one$ for all $a \in A$. Our next result shows under which other conditions $g(a,b) = \one$ is possible:

\begin{theorem}\label{th:g-1-atoms}
Suppose that $\frA \df \klam{A,f,g}$ is a b-algebra, $\zero \not\in \set{a,b,c} \subseteq A$ and $g(a,b) = \one$. Then,
\begin{enumerate}
\item $\card{A} = 2$ or $a \cdot b = \zero$.
\item $a,b \in \At(A)$.
\item If $g(a,c) = \one$, then $b = c$.
\end{enumerate}
\end{theorem}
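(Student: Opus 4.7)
The plan is to establish the three claims in order: (1) directly from antitonicity of $g$ and the weak betweenness inequality; (2) through a carefully chosen substitution into~\eqref{ABT3} that I expect to be the main obstacle; and (3) as a quick corollary of~(2) using co-additivity. For~(1), I would combine antitonicity of $g$ with the inequality $g(x,x)\leq x$ (valid for nonzero $x$ in every b-algebra by Proposition~\ref{prop:b-algebra-is-weak-b}). Assuming $a\cdot b\neq\zero$, antitonicity of $g$ in both slots yields $g(a\cdot b,a\cdot b)\geq g(a,b)=\one$, and the weak betweenness inequality gives $a\cdot b\geq g(a\cdot b,a\cdot b)=\one$. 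Hence $a=b=\one$, and applying the same combination to an arbitrary nonzero $x$ produces $\one=g(\one,\one)\leq g(x,x)\leq x$, so $x=\one$, and therefore $|A|=2$.

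The main obstacle is part~(2): most natural instances of~\eqref{ABT3} make the factor $g(x,-y)\cdot y$ vanish and collapse to the trivial bound $\zero\leq y$, so the work is in locating the right substitution. Under $|A|>2$, part~(1) gives $a\cdot b=\zero$. I would suppose $a$ is not an atom and write $a=a_1+a_2$ with $a_1,a_2$ nonzero and disjoint; co-additivity of $g$ splits $g(a,b)=\one$ into $g(a_1,b)=g(a_2,b)=\one$, and~\eqref{ABT1g} transports this to $g(b,a_1)=g(b,a_2)=\one$. The key substitution is $x\defeq b$ and $y\defeq -a_1$ in~\eqref{ABT3}: since $g(b,-y)=g(b,a_1)=\one$, the axiom simplifies to $f(b,-a_1)\leq -a_1$. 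On the other hand,~\eqref{g2f2} applied to $g(b,a_2)=\one$ gives $f(b,a_2)=\one$; monotonicity of $f$ in the second argument together with $a_2\leq -a_1$ then produces $\one=f(b,a_2)\leq f(b,-a_1)\leq -a_1$, which contradicts $a_1\neq\zero$. Hence $a$ is an atom, and the symmetry provided by~\eqref{ABT1g} yields the same conclusion for $b$.

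Part~(3) would then follow quickly. If $|A|=2$ there is nothing to prove. Otherwise, by~(2), $a$, $b$ and $c$ are atoms; if $b\neq c$, then distinct atoms are disjoint and $b\cdot c=\zero$, and co-additivity of $g$ gives $g(a,b+c)=g(a,b)\cdot g(a,c)=\one$ with $b+c\neq\zero$. Invoking~(2) once more on the pair $(a,b+c)$ would force $b+c$ to be an atom, contradicting the strict chain $\zero<b<b+c$. Therefore $b=c$.
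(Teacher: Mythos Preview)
Your proof is correct and follows essentially the paper's approach: parts~(1) and~(3) are identical, and part~(2) is a minor variant. The paper proves directly that $b$ is an atom by picking any $c\lneq b$, using antitonicity of $g$ to get $g(a,c)=g(a,b\cdot-c)=\one$, and then applying \eqref{g2f2} and \eqref{ABT3} with $y=-c$; you instead split $a=a_1+a_2$ via co-additivity and use \eqref{ABT1g} to move the split pieces into the second slot before invoking \eqref{ABT3} with $y=-a_1$. The paper's route is marginally more economical (no detour through \eqref{ABT1g} for the first atomicity claim), but the core mechanism---forcing $\one\leq -(\text{something nonzero})$ from \eqref{ABT3} and \eqref{g2f2}---is the same. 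One small omission: in part~(2) you should note, as you do in part~(3), that the case $|A|=2$ is trivial since then $a=b=\one\in\At(A)$.
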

\begin{proof}
1. Suppose that $a \cdot b \neq \zero$. Then, by \eqref{ABTW} and the fact that $g$ is antitone,
\begin{gather*}
\one = g(a,b) \leq g(a \cdot b, a\cdot b) \leq a \cdot b,
\end{gather*}
which implies $a = b =\one$. If $0 \lneq c \leq\one$, then $\one= g(\one,\one) \leq g(c,c) \leq c$ which shows that $\card{A} = 2$.

2. If $\card{A} = 2$, then $a = b = \one \in \At(A)$. Thus, $a \cdot b = \zero$ by 1. above. Suppose that $0 \leq c \lneq b$; then $0 \neq b\cdot-c\neq\zero$, hence, $\one = g(a,b)\leq g(a,b\cdot-c)\leq f(a,b\cdot-c)$ by \eqref{g2f2}. Furthermore, $\one = g(a,b) \leq g(a,c)$, therefore, $b\cdot-c\leq -c =  g(a,c)\cdot-c$. Using \eqref{ABT3} and the fact that $f$ is monotone we obtain
\begin{gather*}
\one=f(a,b\cdot-c)\leq f(a,g(a,c)\cdot-c))\leq-c,
\end{gather*}
which implies $c = \zero$. It is shown analogously that $a$ is an atom.

3. If $g(a,b) = g(a,c) = \one$, then both $b$ and $c$ are atoms by 2. above and $g(a,b) \cdot g(a,c) = g(a, b+c) = \one$. Again by 2. above we obtain that $b+c$ is an atom, which implies $b = c$.
\end{proof}

\begin{corollary}
Suppose that $\frA \df \klam{A,f,g}$ is a b-algebra. Then,
\begin{enumerate}
  \item If $A$ has at most one atom, then $g^{-1}[\{\one\}]=(\{\zero\}\times A)\cup(A\times\{\zero\})$.
  \item If $M\subseteq (A\setminus\{\zero\})\times(A\setminus\{\zero\})$ and $g[M]=\{\one\}$, then $M\subseteq\At(A)^2$ and $M$ is an antichain in the product order of $A^2$.
\end{enumerate}
\end{corollary}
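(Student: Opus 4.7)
The corollary is a transparent unpacking of Theorem~\ref{th:g-1-atoms}, so my plan is bookkeeping rather than new mathematics.

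For item~(1), I would first dispose of the inclusion $\supseteq$ by citing the co-normality clause~(S1) of Definition~\ref{def: suffop}, which gives $g(\zero,a) = g(a,\zero) = \one$ for every $a \in A$. For the reverse inclusion I plan to argue contrapositively: suppose $g(a,b) = \one$ with $a, b \neq \zero$; Theorem~\ref{th:g-1-atoms}(2) then places both $a$ and $b$ in $\At(A)$. Under the hypothesis that $A$ has at most one atom, the pair $(a,b)$ can only be $(e,e)$ for the unique atom $e$, and Theorem~\ref{th:g-1-atoms}(1) rules this out: it forces either $a \cdot b = e = \zero$ (contradicting $e \in \At(A)$) or the degenerate case $\card{A} = 2$.

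For item~(2), I would obtain the containment $M \subseteq \At(A)^2$ by applying Theorem~\ref{th:g-1-atoms}(2) pointwise, since each $(a,b) \in M$ satisfies $a, b \neq \zero$ and $g(a,b) = \one$. The antichain property then reduces to a one-line order-theoretic remark: distinct atoms of $A$ are pairwise incomparable in $A$, so if $(a,b), (a',b') \in \At(A)^2$ are comparable in the product order (meaning $a \leq a'$ and $b \leq b'$), atomicity forces equality componentwise, whence $M$ is an antichain.

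There is no real obstacle here; the only cosmetic issue is how to read ``at most one atom'' in item~(1) so as to exclude the 2-element Boolean algebra, where the b-algebra axioms are consistent with $g(\one,\one) = \one$.
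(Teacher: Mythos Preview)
The paper states this corollary without proof, so there is nothing to compare against line by line; your derivation from Theorem~\ref{th:g-1-atoms} is exactly the intended one and is correct.

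Two comments are worth making. First, the issue you flag at the end is not cosmetic but a genuine counterexample to item~(1) as written: the two-element b-algebra $\frA_0$ of Example~\ref{ex:QnotS} has a single atom and satisfies $g(\one,\one)=\one$, so $(\one,\one)\in g^{-1}[\{\one\}]$ while $(\one,\one)\notin(\{\zero\}\times A)\cup(A\times\{\zero\})$. Your argument is sound once one assumes $\card{A}>2$, since Theorem~\ref{th:g-1-atoms}(1) then forces $a\cdot b=\zero$, contradicting $a=b=e$ for the unique atom~$e$; the atomless case is immediate. Second, your antichain argument for item~(2) is correct and in fact does not invoke part~(3) of Theorem~\ref{th:g-1-atoms}: any subset of $\At(A)^2$ is automatically an antichain in the product order because comparable atoms are equal. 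Part~(3) actually gives the stronger conclusion that $M$, viewed as a binary relation, is a partial injection---a fact the corollary does not record.
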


\begin{figure}
\centering
\begin{tikzpicture}
    \foreach \x\n in {-2/a,0/b,2/c}
        {
            \node [fill,circle,inner sep=2pt] (\n) at (\x,0) {};
        }
        \node [fill,circle,inner sep=2pt] (z) at (0,-1.5) {};
        \node [fill,circle,inner sep=2pt] (o) at (0,3) {};
        \node[below,outer sep = 2pt] at (z) {$\zero$};
        \node[above,outer sep = 2pt] at (o) {$\one$};
    \foreach \y\z in {a/-2,b/0,c/2}
    {
        \draw (z) -- (\y);
        \draw[dotted,shorten <=5pt,shorten >=5pt] (\y) -- (\z,1);
    }
    \draw[dotted,shorten <=5pt,shorten >=5pt] (a) -- (b);
    \draw[dotted,shorten <=5pt,shorten >=5pt] (-3,0) -- (a);
    \draw[dotted,shorten <=5pt,shorten >=5pt] (b) -- (c);
    \draw[dotted,shorten <=5pt,shorten >=5pt] (c) -- (3,0);
    \draw[dotted,shorten <=5pt,shorten >=5pt] (o) -- (0,2);

    \coordinate (c1) at (-1,0);
    \coordinate (c2) at (1,0);

    \node[ellipse,draw,minimum width = 3cm, minimum height = 1.2cm,gray] (e1) at (c1) {};
    \node[ellipse,draw,minimum width = 3cm, minimum height = 1.2cm,gray] (e2) at (c2) {};
    \draw[->,shorten <=5pt,shorten >=5pt,teal] (e1) to [out=90,in=180] node [pos = 0.5,left = 3pt] {$g$} (o);
    \draw[->,shorten <=5pt,shorten >=5pt,teal] (e2) to [out=90,in=0] node [pos = 0.5,right = 3pt] {$g$} (o);
\end{tikzpicture}
\caption{The situation excluded by Theorem~\ref{th:g-1-atoms}(3).}\label{fig:impossible}
\end{figure}
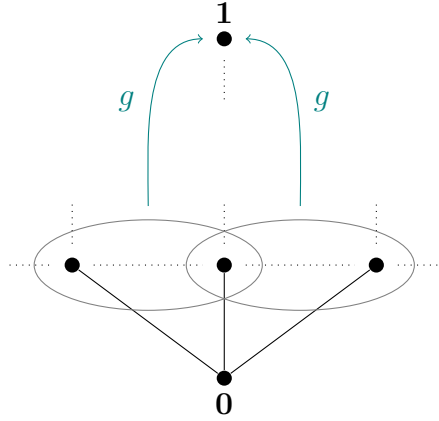

\begin{example}\label{ex:g-on-pairs-of-atoms}
With respect to Theorem~\ref{th:g-1-atoms}(3) we will exhibit a b-algebra $\frA$ such that $g(a,b)=\one=g(c,d)$, none of the four arguments is zero and they are pairwise disjoint, i.e., the theorem is as strong as it can be. To this end, let $U\defeq\{a,b,c,d\}$, take all triples
\[
M\defeq\{\klam{a,x,b}\mid x\in U\}\cup\{\klam{c,x,d}\mid x\in U\},
\]
and close $M$ under axioms \eqref{BT0}--\eqref{BT3} to obtain $B$. Such closure does not lead to a contradiction, that is, no pair of triples $\klam{x,y,z}$ and $\klam{x,z,y}$ such that $y\neq z$ is contained in $B$. In consequence, $\frF\defeq\klam{U,B}$ is a b-frame. In $\Cm^{ps}(\frA)$ we have
$\suff{B}(\{a\},\{b\})=U=\suff{B}(\{c\},\{d\})$. The situation is depicted in Figure~\ref{fig:g-on-pairs-of-atoms}.\QED
\end{example}

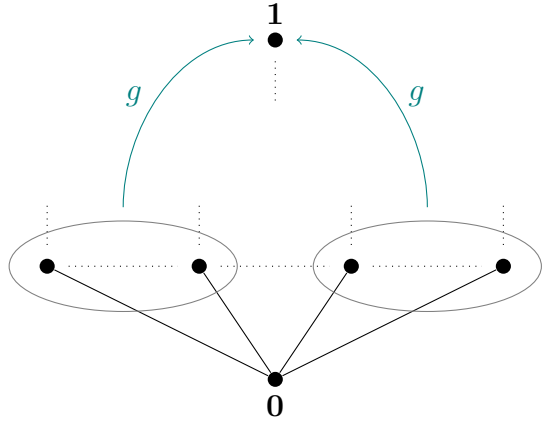
\begin{figure}[htb]
\centering
\begin{tikzpicture}
    \foreach \x\n in {-3/a,-1/b,1/c,3/d}
        {
            \node [fill,circle,inner sep=2pt] (\n) at (\x,0) {};
        }
        \node [fill,circle,inner sep=2pt] (z) at (0,-1.5) {};
        \node [fill,circle,inner sep=2pt] (o) at (0,3) {};
        \node[below,outer sep = 2pt] at (z) {$\zero$};
        \node[above,outer sep = 2pt] at (o) {$\one$};
    \foreach \y\z in {a/-3,b/-1,c/1,d/3}
    {
        \draw (z) -- (\y);
        \draw[dotted,shorten <=5pt,shorten >=5pt] (\y) -- (\z,1);
    }
    \draw[dotted,shorten <=5pt,shorten >=5pt] (a) -- (b);
    \draw[dotted,shorten <=5pt,shorten >=5pt] (b) -- (c);
    \draw[dotted,shorten <=5pt,shorten >=5pt] (c) -- (d);
    \draw[dotted,shorten <=5pt,shorten >=5pt] (o) -- (0,2);

    \coordinate (c1) at (-2,0);
    \coordinate (c2) at (2,0);

    \node[ellipse,draw,minimum width = 3cm, minimum height = 1.2cm,gray] (e1) at (c1) {};
    \node[ellipse,draw,minimum width = 3cm, minimum height = 1.2cm,gray] (e2) at (c2) {};
    \draw[->,shorten <=5pt,shorten >=5pt,teal] (e1) to [out=90,in=180] node [pos = 0.5,left = 3pt] {$g$} (o);
    \draw[->,shorten <=5pt,shorten >=5pt,teal] (e2) to [out=90,in=0] node [pos = 0.5,right = 3pt] {$g$} (o);
\end{tikzpicture}\caption{As Example~\ref{ex:g-on-pairs-of-atoms} shows, there may be disjoint pairs of atoms for which $g$ takes the value~$\one$.}\label{fig:g-on-pairs-of-atoms}
\end{figure}

\begin{theorem}\label{thm:ABTW+}
Suppose that $\klam{A,g}$ is a binary sufficiency algebra which satisfies \eqref{ABTW}.
 If $g(a,b) \neq \zero$, then $a \cdot b \leq g(a,b)$. If, additionally, $a\cdot b\neq \zero$, then $g(a,b) = a \cdot b$ and $a \cdot b$ is an atom. 
\end{theorem}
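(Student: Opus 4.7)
The plan rests on one repeated observation: for any element $c$ with $c \leq a$ and $c \leq b$, antitonicity of $g$ in each argument gives $g(a,b) \leq g(c,b) \leq g(c,c)$; if moreover $c \neq \zero$, then \eqref{ABTW} yields $g(c,c) \leq c$, so altogether $g(a,b) \leq c$. Both parts of the theorem will be deduced by choosing $c$ judiciously.

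For the first assertion, I would set $h \defeq a \cdot b \cdot -g(a,b)$, noting that by construction $h \leq a$, $h \leq b$, and $h \leq -g(a,b)$. The goal is to show $h = \zero$. Suppose for contradiction that $h \neq \zero$; applying the opening observation with $c \defeq h$ gives $g(a,b) \leq h \leq -g(a,b)$, which forces $g(a,b) = \zero$, contradicting the hypothesis. Hence $h = \zero$, i.e., $a \cdot b \leq g(a,b)$.

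For the second assertion, assume additionally that $a \cdot b \neq \zero$. The opening observation with $c \defeq a \cdot b$ supplies $g(a,b) \leq a \cdot b$, which combined with the first assertion gives $g(a,b) = a \cdot b$. To see that $a \cdot b$ is an atom, I would fix any $d$ with $\zero \lneq d \leq a \cdot b$; then $d$ is a common non-zero lower bound of $a$ and $b$, so the observation with $c \defeq d$ gives $g(a,b) \leq d$, i.e., $a \cdot b \leq d$, whence $d = a \cdot b$.

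The main obstacle is spotting the correct element $h$ for the first assertion. Once one arranges that $h \leq -g(a,b)$ holds by the very definition of $h$, the opening observation delivers the matching upper bound $g(a,b) \leq h$, and the contradiction is immediate; everything else is a routine application of the same observation to different common lower bounds.
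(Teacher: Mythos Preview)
Your proof is correct and follows essentially the same route as the paper: both arguments hinge on the element $h = a \cdot b \cdot -g(a,b)$ and on the chain $g(a,b) \leq g(c,c) \leq c$ for a non-zero common lower bound $c$ of $a$ and $b$. You have simply factored this chain out as a reusable observation and applied it three times, whereas the paper writes each instance inline.
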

\begin{proof}
Assume that $a \cdot b \not\leq g(a,b)$. Then, ${a \cdot b \cdot -g(a,b) \neq \zero}$, and \eqref{ABTW} implies that
\begin{gather*}
g(a \cdot b \cdot  -g(a,b),a \cdot -g(a,b)) \leq a \cdot b \cdot -g(a,b).
\end{gather*}
 Since $g$ is antitone we obtain
\[
g(a,b) \leq g(a \cdot b \cdot -g(a,b),a \cdot b \cdot -g(a,b)) \leq a \cdot b \cdot -g(a,b) \leq -g(a,b),
\]
and thus, $g(a,b) =\zero$. If, in addition,  $a \cdot b \neq \zero$, then $g(a \cdot b, a \cdot b) \leq a \cdot b $ by \eqref{ABTW}, thus, $g(a,b) = a \cdot b$.

Finally, suppose that $\zero \lneq c \leq a \cdot b$. Using \eqref{ABTW} and the fact that $g$ is antitone we obtain
\begin{gather*}
\zero \neq c \leq a \cdot b \leq g(a,b) \leq g(a \cdot b, a \cdot b) \leq g(c,c) \leq c.
\end{gather*}
It follows that $c = a \cdot b$, thus, $a \cdot b$ is an atom of $A$.
%
\end{proof}

\begin{example}
    Consider the b-frame $\frF\defeq\langle[0,1],\Bleqs\rangle$ where $[0,1]$ is the closed unit interval of the real numbers and $\Bleqs$ is induced by the standard order $\leqslant$.  In $\Cm^{ps}(\frF)$ we have $\suff{\Bleqs}(\{0\},\{1\})=[0,1]$, and clearly $\{0\}$ and $\{1\}$ are disjoint atoms of the complex algebra of $\frF$. It is easy to see that if $\suff{\Bleqs}(X,Y)=[0,1]$, then $\{X,Y\}=\{\{0\},\{1\}\}$.

    On the other hand, for the subframe $\frF^\ast\defeq\klam{[0,1),\Bleqs^\ast}$ of $\frF$, we have that:
    \[
        \suff{\Bleqs^\ast}^{-1}\left[\{[0,1)\}\right]=\left(\{\emptyset\}\times 2^{[0,1)}\right)\cup\left(2^{[0,1)}\times \{\emptyset\}\right),
    \]
    although $\frF^\ast$ has uncountably many atoms.\QED
\end{example}

\begin{lemma}\label{lem:for-d}
Let $\frA\defeq\klam{A,f,g}$ be a PS-algebra. In the presence of  \eqref{ABT0}, \eqref{ABT2} and \eqref{ABT3} the axiom  \eqref{ABTW} is equivalent to
\begin{gather}\label{5-for-d}
(\forall a,b \in A)[a \cdot b \neq \zero \rarrow g(a,b) \leq f(a,b)].
\end{gather}
\end{lemma}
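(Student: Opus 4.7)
The plan is to prove the two implications separately, and to observe that essentially all the algebraic labor has already been done: the forward direction is immediate from Theorem~\ref{thm:ABTW+}, while the backward direction is a local refinement of the proof of Proposition~\ref{prop:b-algebra-is-weak-b}.

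For ($\Rightarrow$), assume \eqref{ABTW} and suppose $a \cdot b \neq \zero$. I would split on whether $g(a,b)=\zero$. If $g(a,b)=\zero$, then $g(a,b)\leq f(a,b)$ holds trivially. Otherwise $g(a,b)\neq \zero$, and Theorem~\ref{thm:ABTW+}—which uses only that $\klam{A,g}$ is a binary sufficiency algebra satisfying \eqref{ABTW}—yields $g(a,b)=a\cdot b$. Combined with the equivalent form $a\cdot b \leq f(a,b)$ of \eqref{ABT0} noted right after the definition of b-algebra, this gives $g(a,b)=a\cdot b\leq f(a,b)$, as required.

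For ($\Leftarrow$), assume \eqref{5-for-d}. Fix $x\neq\zero$ and aim at $g(x,x)\leq x$. The argument is a copy of the proof of Proposition~\ref{prop:b-algebra-is-weak-b}, with the single observation that the only invocation of \eqref{g2f2} there can be replaced by \eqref{5-for-d}. Concretely, applying \eqref{ABT3} with $y:=-x$ gives
\[
f(x,g(x,x)\cdot -x)\leq -x,
\]
and then \eqref{ABT2} (instantiated at $y:=g(x,x)\cdot -x$ and $z:=x$), combined with monotonicity and normality of $f$, yields
\[
-x\cdot g(x,x)\cdot f(x,x)\;\leq\; f\bigl(x\cdot f(x,-x\cdot g(x,x)),x\bigr)\;\leq\; f(x\cdot -x,x)=\zero.
\]
Now because $x\neq\zero$ we have $x\cdot x=x\neq\zero$, so \eqref{5-for-d} applied to the pair $(x,x)$ delivers $g(x,x)\leq f(x,x)$, hence $g(x,x)\cdot f(x,x)=g(x,x)$. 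Plugging this in above collapses the inequality to $-x\cdot g(x,x)=\zero$, that is, $g(x,x)\leq x$.

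The only subtlety—hardly an obstacle—is noticing that the proof of Proposition~\ref{prop:b-algebra-is-weak-b} invokes \eqref{g2f2} only at the pair $(x,x)$ with $x\neq\zero$, precisely the case where \eqref{5-for-d} and \eqref{g2f2} coincide; everything else goes through verbatim. For ($\Rightarrow$), the potentially tricky step is the appeal to Theorem~\ref{thm:ABTW+}, but that theorem is already available and its hypotheses are literally the hypotheses we have here.
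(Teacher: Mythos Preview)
Your proof is correct. The backward direction is verbatim the paper's argument (and your observation that \eqref{g2f2} is invoked only at the pair $(x,x)$ is exactly the point). For the forward direction, however, you take a small detour: you split on whether $g(a,b)=\zero$ and then invoke Theorem~\ref{thm:ABTW+} to pin down $g(a,b)=a\cdot b$. The paper instead argues directly in one line,
\[
g(a,b)\ \leq\ g(a\cdot b,\,a\cdot b)\ \leq\ a\cdot b\ \leq\ f(a,b),
\]
using antitonicity of $g$, then \eqref{ABTW} (applicable since $a\cdot b\neq\zero$), then the $a\cdot b\leq f(a,b)$ form of \eqref{ABT0}. This avoids both the case split and the appeal to the stronger Theorem~\ref{thm:ABTW+}; in fact, the relevant content of that theorem is precisely this short chain, so your route is a harmless but unnecessary unpacking of it.
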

\begin{proof}
    ($\rarrow$) Suppose $a\cdot b\neq\zero$. Then, using \eqref{ABTW} and \eqref{ABT0} we have:
    \[
        g(a,b)\leq g(a\cdot b,a\cdot b)\leq a\cdot b\leq f(a,b)\,.
   \]
($\larrow$) Suppose $a\neq\zero$. By \eqref{ABT3} it is the case that $f(a,g(a,a)\cdot-a)\leq-a$, and \eqref{ABT2} entails that
\[
-a\cdot g(a,a)\cdot f(a,a)\leq f(a\cdot f(a,g(a,a)\cdot-a),a)\leq f(\zero,a)=\zero\,.
\]
Since \eqref{5-for-d} entails that $g(a,a)\leq f(a,a)$, we have $-a\cdot g(a,a)=\zero$, as required.
\end{proof}

Clearly, \eqref{g2f2} implies \eqref{5-for-d}, but the converse does not hold as the algebra with atoms $a,b,c$ of Table \ref{tab:abtwd} shows.
\begin{table}[htb]  \centering
\[
\begin{array}{r|rrrrrrrr}
f & \zero & \one & b &ac & c & ab & bc & a\\
\hline
    \zero & \zero & \zero & \zero & \zero & \zero & \zero & \zero & \zero \\
    \one & \zero & \one & \one & \one & \one & \one & \one & \one \\
    b & \zero & \one & \one & \one & bc & \one & \one & ab \\
   ac & \zero & \one & \one & \one & \one & \one & \one & \one \\
    c & \zero & \one & bc & \one & \one & \one & \one &ac \\
    ab & \zero & \one & \one & \one & \one & \one & \one & \one \\
    bc & \zero & \one & \one & \one & \one & \one & \one & \one \\
    a & \zero & \one & ab & \one &ac & \one & \one & \one
\end{array}
\qquad
\begin{array}{r|rrrrrrrr}
g & \zero & \one & b &ac & c & ab & bc & a\\
\hline
    \zero & \one & \one & \one & \one & \one & \one & \one & \one \\
    \one & \one & \zero & \zero & \zero & \zero & \zero & \zero & \zero \\
    b & \one & \zero & \zero & \zero & a & \zero & \zero & \zero \\
   ac & \one & \zero & \zero & \zero & \zero & \zero & \zero & \zero \\
    c & \one & \zero & a & \zero & \zero & \zero & \zero & \zero \\
    ab & \one & \zero & \zero & \zero & \zero & \zero & \zero & \zero \\
    bc & \one & \zero & \zero & \zero & \zero & \zero & \zero & \zero \\
    a & \one & \zero & \zero & \zero & \zero & \zero & \zero & \zero
\end{array}
\]\caption{A b-algebra satisfying strong b-axioms, \eqref{5-for-d}, and not \eqref{g2f2}. For simplicty, $xy$ is $x+y$.}\label{tab:abtwd}
\end{table}

Algebras which satisfy \eqref{g2f2} are binary instances of the weak MIAs introduced in Section~\ref{sec:bao}. A remarkable property of such algebras is the fact that they are discriminator algebras in the sense of \cite{wer78}. Here, the \emph{unary discriminator} on a Boolean algebra $A$ is the mapping $d\colon A \to A$ such that for all $a \in A$
\begin{gather*}
d(a) =
\begin{cases}
\zero, &\text{if } a = \zero, \\
\one, &\text{otherwise.}
\end{cases}
\end{gather*}

\pagebreak

\noindent However, as the lemma below demonstrates, \eqref{g2f2} can be weakened to \eqref{5-for-d}.

\begin{lemma}
Suppose that $\frA \defeq \klam{A,f,g}$ is a binary PS-algebra. Then $d: A \to A$ as defined above is the unary discriminator \tiff
$\frA$ satisfies \eqref{5-for-d}.
\end{lemma}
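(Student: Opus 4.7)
The plan is to establish the equivalence by identifying an explicit term in the signature of $\frA$ that realises the unary discriminator $d$ precisely when \eqref{5-for-d} holds. A natural candidate, suggested by Lemma~\ref{lem:for-d}, is
\[
t(x) := f(x,x) + {-g(x,x)}.
\]

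For the direction $(\Leftarrow)$, I would assume \eqref{5-for-d} and verify $t(x)=d(x)$ by splitting on whether $x=\zero$. When $x=\zero$, normality (P1) gives $f(\zero,\zero)=\zero$ and co-normality (S1) gives $g(\zero,\zero)=\one$, so $t(\zero)=\zero+\zero=\zero=d(\zero)$. When $x\neq\zero$, the product $x\cdot x=x$ is nonzero, so \eqref{5-for-d} applied to the pair $(x,x)$ yields $g(x,x)\leq f(x,x)$, and hence $t(x)=f(x,x)+{-g(x,x)}=\one=d(x)$.

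For the direction $(\Rightarrow)$, I would assume $t$ coincides with $d$; evaluating at nonzero $x$ gives the diagonal inequality $g(x,x)\leq f(x,x)$ for every $x\neq\zero$. To lift this to the full \eqref{5-for-d}, I would take an arbitrary pair $a,b$ with $a\cdot b\neq\zero$, set $c:=a\cdot b$, and chain together the antitonicity of $g$, the diagonal hypothesis at $c$, and the monotonicity of $f$:
\[
g(a,b)\leq g(c,b)\leq g(c,c)\leq f(c,c)\leq f(a,c)\leq f(a,b),
\]
where the first two steps use $c\leq a$ and $c\leq b$ in the antitone operator $g$, and the last two use the same inequalities in the monotone operator $f$. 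The resulting bound $g(a,b)\leq f(a,b)$ is exactly \eqref{5-for-d}.

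The main step, and essentially the only non-routine one, is the chain in the reverse direction. Its correctness hinges on the observation that $c=a\cdot b$ sits simultaneously below both $a$ and $b$, which is what allows the monotonicity behaviour of $f$ and the antitonicity behaviour of $g$ to cooperate around the common diagonal value at $c$. The forward direction is then a direct computation once the candidate term $t$ has been identified.
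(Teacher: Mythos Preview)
Your proof is correct and follows essentially the same route as the paper's own argument: both directions hinge on the observation that the term $f(x,x)+{-g(x,x)}$ evaluates to $\one$ at nonzero $x$ precisely when $g(x,x)\leq f(x,x)$, and the passage from the diagonal inequality to the full condition \eqref{5-for-d} is obtained in both cases by sandwiching via $c=a\cdot b$ using antitonicity of $g$ and monotonicity of $f$. Your version is slightly more explicit---you check the case $x=\zero$ separately and expand the monotonicity/antitonicity chain into single-coordinate steps---but the substance is identical.
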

\begin{proof}
\aright Suppose that $d$ is the discriminator, and let $a,b \in A, a \cdot b \neq 0$. Then, $d(a \cdot b) = f(a \cdot b, a \cdot b) + - g(a \cdot b, a \cdot b) = \one$, i.e. $g(a \cdot b, a \cdot b) \leq f(a \cdot b, a \cdot b)$. Now,
\begin{gather*}
g(a,b) \leq g(a \cdot b, a \cdot b) \leq  f(a \cdot b, a \cdot b) \leq f(a,b),
\end{gather*}
since $g$ is antitone and $f$ is monotone.

\aleft Let $a \in A, a \neq \zero$. Then, $g(a,a) \leq f(a,a)$ by the hypothesis, which implies that $f(a,a) + -g(a,a) = \one$.
\end{proof}

Furthermore, observe that ---due to the facts that $g$ is antitone and $f$ is monotone ---\eqref{5-for-d} is equivalent to
\[a \neq \zero  \rarrow g(a,a) \leq f(a,a)\,.\]

Collecting all the facts above we have the following
\begin{theorem}
Suppose that $\frA\defeq\klam{A,f,g}$ is a PS-algebra that satisfies \eqref{ABT0}, \eqref{ABT2} and \eqref{ABT3}. Then, $d\colon A \to A$  defined by $d(a) \defeq f(a,a)+ -g(a,a)$ is the unary discriminator \tiff $\frA$ satisfies \eqref{ABTW}.
\end{theorem}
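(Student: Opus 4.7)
The statement is essentially the composition of two equivalences already established in the excerpt, so the proof plan amounts to chaining them together cleanly.

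My first step is to invoke the preceding lemma, which asserts that for any binary PS-algebra $\klam{A,f,g}$, the map $d(a)=f(a,a)+-g(a,a)$ is the unary discriminator if and only if \eqref{5-for-d} holds, i.e.
\[
(\forall a,b\in A)\bigl[a\cdot b\neq\zero\rarrow g(a,b)\leq f(a,b)\bigr].
\]
This equivalence is unconditional — it does not require \eqref{ABT0}, \eqref{ABT2} or \eqref{ABT3} — so it applies directly to our $\frA$.

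Second, I would use Lemma~\ref{lem:for-d}, which states precisely that in the presence of \eqref{ABT0}, \eqref{ABT2} and \eqref{ABT3}, condition \eqref{ABTW} is equivalent to \eqref{5-for-d}. Since by hypothesis $\frA$ satisfies \eqref{ABT0}, \eqref{ABT2} and \eqref{ABT3}, this equivalence is available.

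Chaining the two: $d$ is the unary discriminator $\iff$ \eqref{5-for-d} holds in $\frA$ $\iff$ $\frA$ satisfies \eqref{ABTW}. There is no substantive obstacle here — the content of the theorem is the recognition that the two previous lemmas combine to characterize the discriminator property in terms of the weak-betweenness axiom under the stated hypotheses. The only thing to be careful about is to state the hypotheses of Lemma~\ref{lem:for-d} explicitly (\eqref{ABT0}, \eqref{ABT2}, \eqref{ABT3}) and to note that the preceding lemma supplies the first equivalence without any such assumption, so no circularity is introduced.
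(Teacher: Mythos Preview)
Your proposal is correct and is essentially the paper's own argument: the paper also derives the equivalence by passing through \eqref{5-for-d}, invoking Lemma~\ref{lem:for-d} for the (\(\Rightarrow\)) direction and (implicitly via \eqref{ABTW} together with \eqref{ABT0}) the same reduction for (\(\Leftarrow\)). The only difference is cosmetic---the paper unfolds the preceding lemma's computation rather than citing it wholesale---so your cleaner two-lemma chaining is perfectly fine.
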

\begin{proof}
\aright Suppose that $d$ is the discriminator, and let $\zero \neq a \in \frA$. Then, $d(a) = f(a,a) + -g(a,a) =\one$ which implies $g(a,a) \leq f(a,a)$. So \eqref{5-for-d} holds which entails \eqref{ABTW}, by Lemma~\ref{lem:for-d}.

\aleft Let $a \in A, a \neq \zero$. Then, $g(a,a) \leq f(a,a)$ by the hypothesis, which implies that $f(a,a) + -g(a,a) = \one$.
\end{proof}
\begin{corollary}\label{cor:wdisc}
Each weak b-algebra is a discriminator algebra.
\end{corollary}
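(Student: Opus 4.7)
The plan is to reduce the corollary to the unnamed lemma stated two results above (``Suppose that $\frA \defeq \klam{A,f,g}$ is a binary PS-algebra. Then $d\colon A \to A$ as defined above is the unary discriminator \tiff $\frA$ satisfies \eqref{5-for-d}.''). By that lemma, to show that an arbitrary weak b-algebra $\frA$ is a discriminator algebra it suffices to verify \eqref{5-for-d}, namely that $a \cdot b \neq \zero$ implies $g(a,b) \leq f(a,b)$.

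By definition, a weak b-algebra has available \eqref{ABT0}, \eqref{ABTW}, as well as the monotonicity of $f$ and antitonicity of $g$ inherited from additivity and co-additivity. Given $a,b \in A$ with $a \cdot b \neq \zero$, I would chain
\[
g(a,b) \leq g(a \cdot b,\, a \cdot b) \leq a \cdot b \leq f(a \cdot b,\, a \cdot b) \leq f(a,b),
\]
using, respectively, antitonicity of $g$, \eqref{ABTW} (applicable because $a \cdot b \neq \zero$), \eqref{ABT0}, and monotonicity of $f$. This is exactly \eqref{5-for-d}, so the unnamed lemma above then delivers that $d(a) \defeq f(a,a) + -g(a,a)$ is the unary discriminator and hence that $\frA$ is a discriminator algebra in the sense of \cite{wer78}.

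The only delicate point worth flagging is that one should not try to obtain the corollary by plugging into the theorem immediately preceding it, since that theorem assumes \eqref{ABT3}, which weak b-algebras are not required to satisfy. Instead, the argument above is nothing more than the ($\rarrow$) direction of Lemma~\ref{lem:for-d}, whose proof uses only \eqref{ABT0}, \eqref{ABTW} and (anti)monotonicity and therefore is valid in every weak b-algebra. So there is no real obstacle; the content of the corollary is the observation that this fragment of Lemma~\ref{lem:for-d} is already enough to trigger the discriminator lemma.
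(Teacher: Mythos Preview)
Your proof is correct and is essentially the argument the paper has in mind, just made explicit: the corollary in the paper is stated without proof, relying on the preceding theorem, whose $(\larrow)$ direction uses only \eqref{ABT0} and \eqref{ABTW} (not \eqref{ABT3}) to get $g(a,a)\leq a\leq f(a,a)$ and hence $d(a)=\one$. Your observation that one should not invoke that theorem as stated---because its hypotheses formally include \eqref{ABT3}---is a fair point, and routing the argument through the earlier lemma and \eqref{5-for-d} cleanly sidesteps this technicality.
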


Since a discriminator algebra is simple by \citep[Theorem 2.2]{wer78} this implies

\begin{proposition}
The class of weak b-algebras is not closed under products.
\end{proposition}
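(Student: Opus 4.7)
The plan is to exploit the two results immediately preceding the proposition: every weak b-algebra is a discriminator algebra by Corollary~\ref{cor:wdisc}, and every discriminator algebra is simple by Werner's theorem. Hence every weak b-algebra is simple, and it suffices to exhibit a non-trivial weak b-algebra $\frA$ whose square $\frA \times \frA$ fails to be simple.

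For $\frA$ I would take the full complex algebra $\Cm^{ps}(\frF)$ of any non-empty b-frame $\frF\defeq\klam{U,B}$; a concrete choice is the one-point frame $\klam{\{\ast\},\{\klam{\ast,\ast,\ast}\}}$. By Theorem~\ref{th:full-complex-is-BTA}, $\Cm^{ps}(\frF)$ is a b-algebra, and Proposition~\ref{prop:b-algebra-is-weak-b} shows that every b-algebra is in particular a weak b-algebra. Its underlying Boolean algebra is $2^U$, which has at least two elements in accordance with the paper's standing non-triviality convention.

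Next, form the componentwise direct product $\frA\times\frA$, carried out in the obvious way for PS-algebras. Since $|A|\geqs 2$, pick $a,b\in A$ with $a\neq b$; then $(a,a)$ and $(a,b)$ are distinct elements of the product that are identified by the first projection $\pi_1\colon \frA\times\frA \to \frA$. Thus $\ker\pi_1$ is a congruence of $\frA\times\frA$ strictly between the identity and the universal congruence, so $\frA\times\frA$ is not a simple algebra.

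If $\frA\times\frA$ were a weak b-algebra, then Corollary~\ref{cor:wdisc} together with the simplicity of discriminator algebras would force it to be simple---contradicting the previous paragraph. Consequently $\frA\times\frA$ is not a weak b-algebra, and the class is not closed under products. The argument is essentially routine; the only point requiring care is ensuring that $\frA$ is non-trivial so that the projection kernel is a proper congruence, and this is automatic from the conventions adopted at the start of the paper.
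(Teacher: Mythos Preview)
Your argument is correct and follows the same route as the paper: weak b-algebras are discriminator algebras (Corollary~\ref{cor:wdisc}), hence simple by Werner's theorem, so any product of two non-trivial factors cannot be a weak b-algebra. The paper leaves the choice of factor and the non-simplicity of the product implicit, whereas you spell out a concrete $\frA$ and the projection-kernel argument; the underlying idea is identical.
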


Since  every b-algebra is a weak b-algebra we obtain

\begin{corollary}\label{cor:disc}
Each b-algebra is a discriminator algebra.
\end{corollary}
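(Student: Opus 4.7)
The plan is essentially a one-line argument, chaining together two results already established in the excerpt. By Proposition~\ref{prop:b-algebra-is-weak-b}, every b-algebra $\frA$ is a weak b-algebra. Then Corollary~\ref{cor:wdisc} immediately gives that $\frA$ is a discriminator algebra, so there is nothing further to do.

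If one wanted to unpack the argument (rather than merely cite the two prior results), the substantive content lives in the theorem preceding Corollary~\ref{cor:wdisc}: for a b-algebra $\frA \defeq \klam{A,f,g}$ satisfying \eqref{ABT0}, \eqref{ABT2}, \eqref{ABT3}, the term $d(a) \defeq f(a,a) + {-g(a,a)}$ is the unary discriminator precisely when \eqref{ABTW} holds. Proposition~\ref{prop:b-algebra-is-weak-b} has already shown that b-algebras validate \eqref{ABTW}, so the same $d$ serves as the discriminator on any b-algebra.

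There is no anticipated obstacle: the statement is a direct corollary. The proof is simply \emph{Immediate from Proposition~\ref{prop:b-algebra-is-weak-b} and Corollary~\ref{cor:wdisc}}.
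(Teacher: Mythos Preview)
Your proposal is correct and matches the paper's own reasoning exactly: the paper simply notes ``Since every b-algebra is a weak b-algebra we obtain'' the corollary, i.e., it chains Proposition~\ref{prop:b-algebra-is-weak-b} with Corollary~\ref{cor:wdisc} just as you do.
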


\section{Representation of b-algebras}\label{sec:repAbtw}
In this section we shall show that the class of b-algebras is closed under canonical extensions.
Suppose that $\frA \defeq \klam{A,f,g}$ is a PS-algebra. Modifying the notation in Section \ref{sec:bao} for our purpose we designate the middle column as special and define canonical  relations on $\Ult(A)$ by
\begin{align}
Q_f(\ult_1,\ult_2,\ult_3) &\iffdef f[\ult_1 \times \ult_3] \subseteq \ult_2,\tag{$\dftt{Q_f}$} \\
S_g(\ult_1,\ult_2,\ult_3) &\iffdef g[\ult_1\times\ult_3]\cap\ult_2\neq\emptyset.\tag{$\dftt{S_g}$}
\end{align}
So, $\Cf^{ps}(\frA)=\langle\Ult(A),Q_f,S_g\rangle$ is the canonical frame of $\frA$.

\pagebreak

The weak MIA axiom \eqref{g2f2} shows that the canonical relations are not independent:

\begin{lemma}\label{lem:S2Q2}
$\frA$ satisfies \eqref{g2f2} \tiff $S_{g} \subseteq Q_{f}$. In particular,  $S_{\suff{B}}\subseteq Q_{\poss{B}}$ for every $\Cm^{ps}(\frF)$ of a b-frame $\frF$.
\end{lemma}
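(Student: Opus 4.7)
The plan is to prove the biconditional by direct inclusion arguments using ultrafilter closure properties, and then derive the ``in particular'' clause as a one-line consequence of the first part together with Theorem~\ref{th:full-complex-is-BTA}.

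For the forward direction, I would assume \eqref{g2f2} and take an arbitrary triple with $S_g(\ult_1,\ult_2,\ult_3)$; by definition there exist witnesses $x \in \ult_1$, $y \in \ult_3$ with $g(x,y) \in \ult_2$. To establish $Q_f(\ult_1,\ult_2,\ult_3)$, I would fix arbitrary $a \in \ult_1$ and $b \in \ult_3$ and pass to the meets $a \cdot x \in \ult_1$ and $b \cdot y \in \ult_3$, which are both nonzero since ultrafilters are proper. Antitonicity of $g$ (a consequence of co-additivity) gives $g(x,y) \leq g(a \cdot x, b \cdot y)$, so the latter lies in $\ult_2$; applying \eqref{g2f2} to the nonzero arguments $a \cdot x$ and $b \cdot y$ yields $f(a \cdot x, b \cdot y) \in \ult_2$, and finally monotonicity of $f$ promotes this to $f(a,b) \in \ult_2$, as required.

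For the converse, I would argue by contraposition. Suppose \eqref{g2f2} fails at some $x,y \neq \zero$, so $c \defeq g(x,y) \cdot -f(x,y) \neq \zero$. Using the ultrafilter theorem I would extend $\{x\}$, $\{c\}$ and $\{y\}$ to ultrafilters $\ult_1,\ult_2,\ult_3$ respectively; then $g(x,y) \in \ult_2$ witnesses $S_g(\ult_1,\ult_2,\ult_3)$, while $-f(x,y) \in \ult_2$ prevents $f(x,y) \in \ult_2$, blocking $Q_f(\ult_1,\ult_2,\ult_3)$ and contradicting $S_g \subseteq Q_f$. The ``in particular'' statement then follows because $\Cm^{ps}(\frF)$ is a b-algebra by Theorem~\ref{th:full-complex-is-BTA} and hence satisfies \eqref{g2f2}, so the already-proved equivalence applies.

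The only real subtlety I anticipate is making sure the witnesses $a \cdot x$ and $b \cdot y$ in the forward direction are nonzero so that \eqref{g2f2} can be invoked; this is immediate from the fact that ultrafilters exclude $\zero$ and are closed under binary meets, but it is the step where the restriction to nonzero arguments in \eqref{g2f2} is genuinely used. Everything else is bookkeeping with monotonicity, antitonicity, and the defining clauses of $Q_f$ and $S_g$.
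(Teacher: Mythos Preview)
Your proposal is correct and follows essentially the same route as the paper: the forward direction takes witnesses for $S_g$, intersects with arbitrary elements of $\ult_1$ and $\ult_3$, and chains antitonicity of $g$, \eqref{g2f2}, and monotonicity of $f$; the converse uses the same contraposition via $g(x,y)\cdot -f(x,y)\neq\zero$ and three suitably chosen ultrafilters. The only cosmetic difference is that the paper derives the ``in particular'' clause from Lemma~\ref{lem:g2f2Cm} rather than Theorem~\ref{th:full-complex-is-BTA}, but either reference works.
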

\begin{proof}
\aright Suppose that $S_{g}(\ult_1,\ult_2,\ult_3)$. Then, $\ult_2 \cap g[\ult_1 \times \ult_3] \neq \z$, say, $x \in \ult_1, y \in \ult_3$ and $g(x,y) \in \ult_2$. We need to show that $f[\ult_1 \times \ult_3] \subseteq \ult_2$. Let $s \in \ult_1,$ and $t \in \ult_3$; then, $\zero \neq x \cdot s \in \ult_1$ and $\zero \neq y \cdot t \in \ult_3$. By \eqref{g2f2} we have $g(x \cdot s,y \cdot t) \leq f(x \cdot s, y \cdot t)$. Using $g(x,y) \in \ult_2$ and the facts that $g$ is antitone and $f$ is monotone we obtain%
\begin{gather*}
 g(x,y) \le g(x \cdot s, y \cdot t)  \leq f(x \cdot s, y \cdot t) \leq f(s,t) \in \ult_2.
\end{gather*}
\aleft Let $x,y \neq \zero$ and assume that $g(x,y) \not\leq f(x,y)$, i.e., $g(x,y) \cdot -f(x,y) \neq \zero$. Choose ultrafilters $\ult_1,\ult_2,\ult_3$ such that $x \in \ult_1, y\in \ult_3$, and $g(x,y) \cdot -f(x,y) \in \ult_2$. Then, $g[\ult_1 \times \ult_3] \cap \ult_2 \neq \z$, and therefore, $f[\ult_1 \times \ult_3] \subseteq \ult_2$, in particular, $f(x,y) \in \ult_2$. This contradicts $-f(x,y) \in \ult_2$.
\end{proof}

The crucial observation about canonical frames of b-algebras is the following result:
\begin{lemma}\label{lem:cmp}
    If $\frA \defeq \klam{A,f,g}$ is a betweenness algebra, then its canonical frame $\Cf^{ps}(\frA)=\klam{\Ult(A),Q_f,S_g}$ has the following properties\/\textup{:}
    \begin{gather}
        Q_{f}(\ult,\ult,\ult)\,,\label{BT0CF}\tag{\ref{BT0}$^\mathsf{Cf}$}\\
        Q_{f}(\ult_1,\ult_2,\ult_3)\rarrow Q_{f}(\ult_3,\ult_2,\ult_1)\,,\label{BT1CF}\tag{\ref{BT1}$^\mathsf{Cf}_f$}\\
        S_g(\ult_1,\ult_2,\ult_3)\rarrow S_g(\ult_3,\ult_2,\ult_1)\,,\label{BT1CFg}\tag{\ref{BT1}$^\mathsf{Cf}_g$}\\
         Q_{f}(\ult_1,\ult_2,\ult_3)\rarrow Q_{f}(\ult_1,\ult_1,\ult_2)\,,\label{BT2CF}\tag{\ref{BT2}$^\mathsf{Cf}$}\\
        Q_{f}(\ult_1,\ult_2,\ult_3)\wedge S_{g}(\ult_1,\ult_3,\ult_2)\rarrow\ult_2=\ult_3\,.\label{BT3CF}\tag{\ref{BT3}$^\mathsf{Cf}$}
    \end{gather}
    If $\frA$ is a strong betweenness algebra, then $\Cf^{ps}(\frA)$ additionally satisfies
    \begin{equation}\label{BT2sCF}\tag{\ref{BT2s}$^\mathsf{Cf}$}
        Q_f(\ult_1,\ult_1,\ult_2)\,.
    \end{equation}
If $\frA$ is a weak betweenness algebra, then $\Cf^{ps}(\frA)$ meets
\begin{equation}\label{BTWCF}\tag{\ref{BTW}$^\mathsf{Cf}$}
    S_g(\ult_1,\ult_2,\ult_1)\rarrow\ult_1=\ult_2\,.
\end{equation}

\end{lemma}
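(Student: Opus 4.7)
The plan is to verify the seven clauses one by one, with the symmetry clauses and the strong axiom being essentially immediate and the contracting axioms handled by a uniform contradiction pattern. Both \eqref{BT1CF} and \eqref{BT1CFg} follow once we observe that \eqref{ABT1} and \eqref{ABT1g}, read together with their instances obtained by swapping the arguments, imply $f(x,y)=f(y,x)$ and $g(x,y)=g(y,x)$; these give $f[\ult_1\times\ult_3]=f[\ult_3\times\ult_1]$ and the analogous equality for $g$. For \eqref{BT0CF} I would invoke the consequence $x\cdot y\leq f(x,y)$ of \eqref{ABT0} recorded just after its statement: $x,y\in\ult$ forces $x\cdot y\in\ult$ and hence $f(x,y)\in\ult$ by upward closure. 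Clause \eqref{BT2sCF} is immediate from \eqref{ABT2s}, since every $y\in\ult_2$ is non-zero and so $x\leq f(x,y)$ pushes $f(x,y)$ into $\ult_1$ whenever $x\in\ult_1$.

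For \eqref{BT2CF} I would proceed by contradiction. Assume $Q_f(\ult_1,\ult_2,\ult_3)$, pick $a\in\ult_1$ and $b\in\ult_2$, and suppose $f(a,b)\notin\ult_1$. Setting $c\defeq a\cdot-f(a,b)\in\ult_1$, monotonicity of $f$ together with $c\leq a$ gives $f(c,b)\leq f(a,b)$, so $c\cdot f(c,b)=\zero$. Instantiating \eqref{ABT2} with $x:=c$ and $y:=b$ then yields $b\cdot f(c,z)\leq f(c\cdot f(c,b),z)=f(\zero,z)=\zero$ for every $z$; on the other hand, taking $z\in\ult_3$ forces $f(c,z)\in\ult_2$ by the assumption on $Q_f$, so $b\cdot f(c,z)$ ought to be a non-zero element of $\ult_2$, contradicting the calculation above.

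The main obstacle is \eqref{BT3CF}. Suppose $Q_f(\ult_1,\ult_2,\ult_3)$, $S_g(\ult_1,\ult_3,\ult_2)$ but $\ult_2\neq\ult_3$. By maximality of ultrafilters one may choose $b$ with $b\in\ult_2$ and $-b\in\ult_3$, regardless of whether $\ult_2\setminus\ult_3$ or $\ult_3\setminus\ult_2$ is non-empty. Take witnesses $x\in\ult_1$ and $y\in\ult_2$ with $g(x,y)\in\ult_3$, and set $y'\defeq y\cdot b\in\ult_2$; antitonicity of $g$ preserves $g(x,y')\in\ult_3$, and $y'\leq b$ gives $-y'\geq -b\in\ult_3$, so $-y'\in\ult_3$ as well. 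The crucial move is to instantiate \eqref{ABT3} with $y$ replaced by $-y'$, obtaining $f(x,g(x,y')\cdot-y')\leq -y'$. Since $g(x,y')$ and $-y'$ both lie in $\ult_3$, so does their meet, and $Q_f(\ult_1,\ult_2,\ult_3)$ then forces $f(x,g(x,y')\cdot-y')\in\ult_2$; the inequality just obtained now puts $-y'\in\ult_2$, contradicting $y'\in\ult_2$.

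Finally, for \eqref{BTWCF} assume witnesses $x,y\in\ult_1$ with $g(x,y)\in\ult_2$. For an arbitrary $a\in\ult_1$ the element $x'\defeq x\cdot y\cdot a$ is a non-zero member of $\ult_1$, and antitonicity of $g$ combined with \eqref{ABTW} yields $g(x,y)\leq g(x',x')\leq x'\leq a$. Upward closure of $\ult_2$ gives $a\in\ult_2$, so $\ult_1\subseteq\ult_2$, which forces $\ult_1=\ult_2$ by maximality of ultrafilters. The unifying theme across the contracting clauses is to form carefully chosen meets inside one ultrafilter so that the one-sided inequalities supplied by the algebraic axioms can be pushed into another ultrafilter, converting them into the required equalities or explicit contradictions.
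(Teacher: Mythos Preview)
Your proposal is correct and follows essentially the same route as the paper's proof: each clause is handled by the corresponding algebraic axiom, with \eqref{BT2CF} and \eqref{BT3CF} done by the same contradiction pattern (form a suitable meet inside one ultrafilter, apply \eqref{ABT2} or \eqref{ABT3}, and push the resulting inequality into another ultrafilter). The only cosmetic difference is that in \eqref{BT2CF} you pick an arbitrary $z\in\ult_3$ where the paper simply takes $z=\one$, and in \eqref{BT3CF} you separate the witness $b\in\ult_2\setminus\ult_3$ from the sufficiency witness $y$ before forming $y'=y\cdot b$, whereas the paper merges these in one step; neither change affects the argument.
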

\begin{proof}
    \eqref{BT0CF}: Fix an ultrafilter $\ult$. If $x,y\in\ult$, then, since $x\cdot y\leq f(x,y)$, it is the case that $f[\ult\times\ult]\subseteq\ult$, as every ultrafilter is closed under meets.

    \smallskip

    \eqref{BT1CF}: By \eqref{ABT1}.

    \smallskip

    \eqref{BT1CFg}: Let $S_g(\ult_1,\ult_2,\ult_3)$, i.e., $g[\ult_1\times\ult_3]\cap\ult_2\neq\emptyset$. Pick $x\in\ult_1$ and $y\in\ult_2$ such that $g(x,y)\in\ult_2$. By \eqref{ABT1g} it is the case that $g(y,x)\in\ult_2$, and, since $\langle y,x\rangle\in\ult_3\times\ult_1$, we have $S_g(\ult_3,\ult_2,\ult_1)$.

    \smallskip

    \eqref{BT2CF}: Suppose that $Q_{f}(\ult_1,\ult_2,\ult_3)$, i.e., $f[\ult_1 \times \ult_3] \subseteq \ult_2$.
Let $x \in \ult_1$ and $y \in \ult_2$; we need to show that $f(x,y) \in \ult_1$. Suppose, on the contrary, that $-f(x,y)\in \ult_1$. First, note that due to monotonicity of $f$ we have:
\[x\cdot -f(x,y)\cdot f(x\cdot -f(x,y),y)\leq x\cdot -f(x,y)\cdot f(x,y)=\zero.\]
By the assumption, $y\cdot f(x\cdot -f(x,y),\one)\in\ult _2$, but  \eqref{ABT2} implies
\[y\cdot f(x\cdot -f(x,y),\one)\leq f(x\cdot -f(x,y)\cdot f(x\cdot -f(x,y),y),\one)=f(\zero,\one)=\zero, \]
which is a contradiction. It follows that $f(x,y)\in \ult_1$.

\smallskip

 \eqref{BT3CF}: Assume $f[\ult_1\times\ult_3]\subseteq\ult_2$ and $g[\ult_1\times\ult_2]\cap\ult_3\neq\emptyset$. Let $x\in\ult_2$ and $-x\in\ult_3$. Furthermore, let $a\in\ult_1$ and $b\in\ult_2$ be such that $g(a,b)\in\ult_3$. Since $g$ is antitone we have that $g(a,b\cdot x)\in\ult_3$. Since also $-b+-x\in\ult_3$, it follows that $g(a,b\cdot x)\cdot(-b+-x)\in\ult_3$, and from \eqref{ABT3} we obtain
    \[
        \ult_2\ni f(a,g(a,b\cdot x)\cdot(-b+-x))\leq-b+-x\in\ult_2\,.
    \]
    But $b\cdot x\in\ult_2$, a contradiction. So it must be the case that $\ult_2=\ult_3$, as required.

    \smallskip

   \eqref{BT2sCF}: We need to show that $f[\ult_1\times\ult_2]\subseteq\ult_1$. If $x\in \ult_1$ and $y\in\ult_2$, then $y\neq\zero$ and by \eqref{ABT2s} we obtain that $x\leq f(x,y)$. Therefore $f(x,y)\in\ult_1$, as required.

   \smallskip

   \eqref{BTWCF}: Assume that $S_g(\ult_1,\ult_2,\ult_1)$, that is $g[\ult_1\times\ult_1]\cap\ult_2\neq\zero$. Let $y_1,y_2\in\ult_1$ such that $g(y_1,y_2)\in\ult_2$. So, from the fact that $g$ is antitone we obtain that $g(y_1\cdot y_2,y_1\cdot y_2)\in\ult_2$. If $x\in\ult_1$, then using the property one more time we obtain that $g(x\cdot y_1\cdot y_2,x\cdot y_1\cdot y_2)\in\ult_2$. However, $x\cdot y_1\cdot y_2\neq\zero$, therefore, from \eqref{ABTW} we obtain
   \[
g(x\cdot y_1\cdot y_2,x\cdot y_1\cdot y_2)\leq x\cdot y_1\cdot y_2\,,
   \]
   and so $x$ is in $\ult_2$. Thus, $\ult_1=\ult_2$ by maximality of ultrafilters.
 \end{proof}

We now have the following representation theorem:
\begin{theorem}\label{thm:repalg}
  Suppose that $\frA \defeq \klam{A,f,g}$ is a (weak, strong) betweenness algebra. Then, the Stone map $h\colon \frA \into  \Em^{ps}(\frA)$ is a (weak, strong) betweenness algebra embedding.
\end{theorem}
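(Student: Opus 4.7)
My plan is to reduce the theorem to (i) the classical PS-representation (Theorems \ref{thm:repP} and \ref{thm:repS}), (ii) the properties of the canonical frame $\Cf^{ps}(\frA)$ recorded in Lemma \ref{lem:cmp}, and (iii) the relational containment $S_g\subseteq Q_f$ provided by Lemma \ref{lem:S2Q2}. By Theorems \ref{thm:repP} and \ref{thm:repS}, the Stone map $h\colon A\to 2^{\Ult(A)}$ is already a Boolean embedding that simultaneously commutes with $f$ and with $g$, hence $h$ is a PS-algebra embedding into $\Em^{ps}(\frA)=\klam{2^{\Ult(A)},\poss{Q_f},\suff{S_g}}$. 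What remains is to check that this codomain satisfies axioms \eqref{ABT0}--\eqref{ABT3} together with \eqref{g2f2} (and the appropriate variants in the weak and strong cases); since all these axioms are universal, the embedded subalgebra will inherit them.

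For the axioms that involve a single operator -- \eqref{ABT0}, \eqref{ABT1}, and \eqref{ABT2} on the $\poss{Q_f}$ side, and \eqref{ABT1g} on the $\suff{S_g}$ side -- I would copy the arguments in Theorem \ref{th:BTi-iff-BTic}, substituting $Q_f$ for $B$ and $S_g$ for $B$ respectively, and feeding them the corresponding frame conditions \eqref{BT0CF}, \eqref{BT1CF}, \eqref{BT2CF}, \eqref{BT1CFg} provided by Lemma \ref{lem:cmp}. These are entirely formal rewritings of the $(\rarrow)$ direction of Theorem \ref{th:BTi-iff-BTic}, because each of these axioms only refers to one of the two canonical relations at a time.

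The two genuinely mixed axioms are \eqref{ABT3} and \eqref{g2f2}. For \eqref{ABT3}, Lemma \ref{lem:cmp} supplies exactly the mixed frame condition \eqref{BT3CF}, $Q_f(\ult_1,\ult_2,\ult_3)\wedge S_g(\ult_1,\ult_3,\ult_2)\rarrow\ult_2=\ult_3$, which is the canonical-frame analogue of \eqref{BT3}; I would then reproduce verbatim the argument from the $(i=3)$ case of Theorem \ref{th:BTi-iff-BTic}, reading $Q_f$ wherever $B$ appears in $\poss{B}$ and $S_g$ wherever $B$ appears in $\suff{B}$. For \eqref{g2f2}, suppose $X,Y\neq\emptyset$ and $u\in\suff{S_g}(X,Y)$, i.e.\ $X\times\{u\}\times Y\subseteq S_g$; picking any $\ult_1\in X$ and $\ult_3\in Y$ gives $S_g(\ult_1,u,\ult_3)$, and Lemma \ref{lem:S2Q2} (which applies because $\frA$ itself satisfies \eqref{g2f2}) delivers $Q_f(\ult_1,u,\ult_3)$, whence $u\in\poss{Q_f}(X,Y)$. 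This is the step where the hypothesis that $\frA$ is a b-algebra (and not merely a PS-algebra with \eqref{ABT0}--\eqref{ABT3}) is essential, and I expect it to be the main conceptual point of the proof: the two canonical relations are a priori independent, and it is precisely \eqref{g2f2} in $\frA$ that glues them together sufficiently to recover \eqref{g2f2} upstairs.

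For the weak and strong variants, the same strategy applies with the appropriate substitutions. In the strong case, Lemma \ref{lem:cmp} gives \eqref{BT2sCF}, and the proof of the $(i=2\mathrm{s})$ clause of Theorem \ref{th:BTi-iff-BTic} yields the complex-algebra version \eqref{BT2sc}, hence \eqref{ABT2s}. In the weak case, Lemma \ref{lem:cmp} supplies \eqref{BTWCF}, from which the argument of the $(i=\mathrm W)$ clause of Theorem \ref{th:BTi-iff-BTic} yields \eqref{BTWc}, i.e.\ $\suff{S_g}(X,X)\subseteq X$ for $X\neq\emptyset$, which is \eqref{ABTW}. The remaining weak/strong axioms are covered by the same verifications as in the main case.
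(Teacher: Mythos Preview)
Your proposal is correct and follows essentially the same approach as the paper: invoke Theorems \ref{thm:repP} and \ref{thm:repS} for the PS-embedding, then verify each b-axiom in $\Em^{ps}(\frA)$ using the corresponding canonical-frame property from Lemma \ref{lem:cmp}, exactly mirroring the $(\rarrow)$ direction of Theorem \ref{th:BTi-iff-BTic} with $Q_f$ and $S_g$ in place of $B$. In fact your write-up is slightly more complete than the paper's own proof, which does not spell out the verification of \eqref{g2f2} in $\Em^{ps}(\frA)$; your use of Lemma \ref{lem:S2Q2} to obtain $S_g\subseteq Q_f$ and hence $\suff{S_g}(X,Y)\subseteq\poss{Q_f}(X,Y)$ for $X,Y\neq\emptyset$ fills that gap cleanly.
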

\begin{proof}
  $h\colon \frA \into\Em^{ps}(\frA)$ is a PS-algebra embedding by Theorems \ref{thm:repP} and \ref{thm:repS}, and we need to show that $\Em^{ps}(\frA)$ is a  (weak, strong) betweenness algebra; we will use Lemma \ref{lem:cmp}. Suppose that $\z \neq X,Y,Z \subseteq \Ult(A)$. Let us begin with the proof for a~betweenness algebra~$\frA$.

\smallskip

\eqref{ABT0}: Let $\ult \in X$. By \eqref{BT0CF} we have $Q_f(\ult,\ult,\ult)$, and $\ult\in X$ implies that  $(X \times \set{\ult} \times X) \cap Q_f \neq \z$. It follows that $\ult \in \poss{Q_f}(X,X)$.

\smallskip

\eqref{ABT1}: Let $\ult\in \poss{Q_f}(X,Y)$. Then, $(X \times \set{\ult} \times Y) \cap Q_f \neq \z$, say, $\ult_1 \in X, \ult_2 \in Y$ such that $Q_f(\ult_1, \ult,\ult_2)$. By \eqref{BT1CF}, $Q_f(\ult_2,\ult,\ult_1)$ which shows that $(Y \times \set{\ult} \times X) \cap Q_f \neq \z$, i.e., $\ult\in \poss{Q_f}(Y,X)$.

\smallskip

\eqref{ABT1g} Suppose $\ult\in\suff{S_g}(X,Y)$, that is $X\times\{\ult\}\times Y\subseteq S_g$. If $\ult_1\in Y$ and $\ult_2\in X$, then $S_g(\ult_2,\ult,\ult_1)$ and this with \eqref{BT1CFg} entails that $S_g(\ult_1,\ult,\ult_2)$. Since our choices were arbitrary, we have that $Y\times\{\ult\}\times X\subseteq S_g$, i.e., $\ult\in\suff{S_g}(Y,X)$.

\smallskip

\enlargethispage{30pt}

\eqref{ABT2}: Let $\ult\in Y \cap \poss{Q_f}(X,Z)$, i.e., $\ult \in Y$ and there are $\ult_1 \in X, \ult_2 \in Z$ such that $Q_f(\ult_1,\ult,\ult_2)$. By \eqref{BT2CF} we have $Q_f(\ult_1,\ult_1,\ult)$, so $\ult_1\in\poss{Q_f}(X,Y)$. Thus, $\ult_1\in X\cap\poss{Q_f}(X,Y)$. As $\ult_2$ is in $Z$, we obtain:
\[
\ult\in \poss{Q_f}(X\cap\poss{Q_f}(X,Y),Z)
\]
as required.


\smallskip

\eqref{ABT3}:  Suppose that $X,Y,Z \subseteq \Ult(A)$ and $\ult \in \poss{Q_f}(X, \suff{S_g}(X,-Y) \cap Y)$. Then, there are $\ult_1 \in X, \ult_2 \in  \suff{S_g}(X,-Y) \cap Y$ such that $Q_f(\ult_1, \ult, \ult_2)$. Now,
\begin{gather*}
\ult_2 \in  \suff{S_g}(X,-Y) \iff (X \times \set{\ult_2} \times -Y) \subseteq S_g.
\end{gather*}
Assume that $\ult \not\in Y$; then, $S_g(\ult_1,\ult_2,\ult)$ and \eqref{BT3CF} implies that $\ult = \ult_2 \in Y$, a contradiction.

\smallskip
For a weak betweenness algebra $\frA$ we need to show that \eqref{ABTW} holds in $\Em^{ps}(\frA)$. To this end, suppose that $\z \neq X \subseteq \Ult(A)$, and $\ult \in \suff{S_g}(X,X)$; then, $(X \times \set{\ult} \times X) \subseteq S_g$. Since $X \neq \z$, there is some $\ultV \in X$, and $S_g(\ultV,\ult,\ultV)$. \eqref{BTWCF} now implies $\ult = \ultV$, thus, $\ult \in X$.

\smallskip

Finally, for a strong betweenness algebra $\frA$ we need to prove \eqref{ABT2s} for $\Em^{ps}(\frA)$. Yet this is straightforward, since if $\emptyset\neq Y\subseteq\Ult(A)$ and $\ult_1\in X$, then by \eqref{BT2sCF} we have that $Q_f(\ult_1,\ult_1,\ult)$ for some $\ult\in Y$. This entails that $\ult_1\in\poss{Q_f}(X,Y)$.
\end{proof}

What we have proven in this section so far is the following: For any b-algebra\footnote{Everything that we write about here applies to weak and strong b-algebras as well.} $\frA\defeq\langle A,f,g\rangle$ its canonical frame $\Cf^{ps}(\frA)=\langle\Ult(A),Q_f,S_g\rangle$ behaves in such a way that the relations $Q_f$ and $S_g$ in a certain way <<simulate>> betweenness axioms. Furthermore, the standard Stone mapping embeds $\frA$ into $\Em^{ps}(\frA)=\langle 2^{\Ult(A)},\poss{Q_f},\suff{S_g}\rangle$. Nonetheless, neither $\Cf^{ps}(\frA)$ is necessarily a b-frame, nor $\Em^{ps}(\frA)$ is necessarily a complex b-algebra. Of course, if $\frA$ is a MIA, i.e., $Q_f=S_g$, then we can take the reduct of $\Cf^{ps}(\frA)$ to $\langle\Ult(A),Q_f\rangle$ which is a b-frame, and we can embed $\frA$ via Stone mapping into the complex b-algebra $\langle2^{\Ult(A)},\poss{Q_f},\suff{Q_f}\rangle$. The problem is---as we will see in the next section in Theorem~\ref{th:no-infinite-b-MIAS}---that there are no infinite b-algebras that are MIAs.

Unfortunately, the question whether for any given b-algebra $\frA$ there exists a b-frame $\frF\defeq\langle U,B\rangle$ such that $\frA$ can be embedded into $\Cm^{ps}(\frF)$ has a negative answer. This will be proven in the next section in the form of Example~\ref{ex:non-b-representable}.

\section{B-algebras and b-complex algebras}\label{sec:BetAlg-BetComplAlg}

In this section we will investigate the connections between b-complex algebras and (abstract) b-algebras and exemplify some instances in which they differ. Let us start with the following definitions:
\begin{definition}
The class of b-algebras is denoted by $\Abtw$.  An algebra $\frA \in \Abtw$ is a \emph{b-complex  algebra} or \emph{representable} if there exists a b-frame $\frF$ such that $\frA$ is isomorphic to a subalgebra of $\Cm^{ps}(\frF)$. The class of b-complex algebras is denoted by $\Cbtw$. As earlier, $\frA$ is a \emph{full} b-complex algebra when it is isomorphic to $\Cm^{ps}(\frF)$ of a b-frame $\frF$.
\QED\end{definition}

Let us briefly recall the situation.  For a b-algebra $\frA$ we start with operators $f,g$ which lead to ternary relations $Q_f$ and $S_g$ on the ultrafilter frame  which, in turn, lead to operators $\poss{Q_f}$ and $\suff{S_g}$, and the embedding of $\frA$ into $\Em^{ps}(\frA)$ is straightforward (Section \ref{sec:repAbtw}). This direction does not involve b-frames.

From frames to algebras we start with a single relation $B$ on $U$ which leads to a b-algebra on $2^U$ with  operators $\poss{B}, \suff{B}$ which, in turn, give us two relations $Q_{\poss{B}}$ and $S_{\suff{B}}$ which, in general, are not equal. One might ask into which one, if any, can we embed our $B$. The answer is: It does not matter. The reason for this is the fact that the mapping $k\colon U \to \Ult(\Cm^{ps}(\frF))$ of Theorem \ref{thm:appFrameRep} sends points of $U$ to principal ultrafilters of $2^U$, and  the relation $\set{\klam{k(x), k(y), k(z)}\mid x,y,z \in U}$ is isomorphic to $B$; it is contained in both $Q_f$  and $S_g$.  This holds for any 3-frame, so the observation is not particular to b-relations.

We first describe the b-algebras on the set of constants:

 \begin{example}\label{ex:QnotS}
 Suppose that $A \df \set{\zero,\one}$, and let $a,b \in A$. If $a =\zero$ or $b =\zero$, then the values of $f(a,b)$ and $g(a,b)$ are determined by the normality, respectively, co-normality of $f$, respectively, $g$; furthermore, $f(\one,\one) = \one$ by \eqref{ABT0}.

 If $g(\one,\one) = \one$, then the corresponding algebra $\frA_0$ is isomorphic to the smallest full b-complex algebra: Let $\frF \df \klam{U,B}$ where $U\defeq \set{x}$ and $B \df \set{\klam{x,x,x}}$. Then,
$$
\begin{array}{c|ccccc|cccc}
\poss{B} & \z & U &&& \suff{B} & \z & U \\ \hline
\z & \z & \z &&& \z & U & U \\
U & \z & U &&& U & U & U
\end{array}
$$
and $Q_{\poss{B}} = \set{\klam{\set{U}, \set{U}, \set{U}}} = S_{\suff{B}}$. This algebra is special in the sense that it is not a proper subalgebra of a b-algebra by Theorem~\ref{thm:ABTW+}(2).

Finally, let $g(\one,\one) = \zero$; then, $f$ and $g$ are as below:
 \[
 \begin{array}{c|ccccc|cc}
f & \zero & \one &&&g& \zero & \one    \\ \hline
\zero & \zero & \zero  &&& \zero & \one & \one \\
\one & \zero & \one &&& \one & \one & \zero
 \end{array}
 \]
It is easy to see that  $\frA_1\defeq\klam{A,f,g}$ is a strong b-algebra, and  $Q_f(\set{\one}, \set{\one}, \set{\one})$, but not $S_g(\set{\one}, \set{\one}, \set{\one})$ as $g[\{\one\}\times\{\one\}]\cap\{\one\}=\emptyset$. Obviously, $\frA_1$ is not isomorphic to $\Cm^{ps}(\frF)$, and so cannot be isomorphic to the full complex algebra of any b-frame.  On the other hand, $\frA_1$ is a b-complex algebra by Theorem~\ref{thm:CmNotSubalg} below.
\QED\end{example}

\begin{theorem}\label{thm:CmNotSubalg}
    Every b-algebra $\frA$ with at least four elements contains an isomorphic copy of the two-element algebra $\frA_1$ of Example~\ref{ex:QnotS} as a subalgebra.
\end{theorem}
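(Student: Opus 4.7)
The plan is to show that the two-element Boolean subalgebra $\{\zero,\one\}$ of $\frA$ is closed under $f$ and $g$ and that the induced operations coincide with the tables defining $\frA_1$. Let $B \defeq \{\zero,\one\} \subseteq A$; clearly $B$ is a Boolean subalgebra of $A$. We need to verify four values of $f$ and four of $g$.

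The normality of $f$ and co-normality of $g$ immediately handle the three cases in which at least one argument is $\zero$: $f(\zero,\zero) = f(\zero,\one) = f(\one,\zero) = \zero$ and $g(\zero,\zero) = g(\zero,\one) = g(\one,\zero) = \one$, matching $\frA_1$. Axiom~\eqref{ABT0} yields $\one \leq f(\one,\one)$, hence $f(\one,\one) = \one$, as required.

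The remaining, and crucial, step is showing $g(\one,\one) = \zero$. By Proposition~\ref{prop:b-algebra-is-weak-b} every b-algebra is a weak b-algebra, so \eqref{ABTW} holds on $\frA$. Apply Theorem~\ref{thm:ABTW+} with $a = b = \one$: if $g(\one,\one) \neq \zero$, then since $\one \cdot \one = \one \neq \zero$, the theorem forces $g(\one,\one) = \one \cdot \one = \one$ and, moreover, $\one \cdot \one = \one$ to be an atom of $A$. But $\one$ being an atom means $|A| = 2$, contradicting the assumption $|A| \geq 4$. Therefore $g(\one,\one) = \zero$.

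It follows that $B$ is closed under $f$ and $g$, so the PS-subalgebra of $\frA$ carried by $B$ exists, and the two tables above show it is isomorphic to $\frA_1$ via the identity map on $\{\zero,\one\}$. The only real work is the computation of $g(\one,\one)$, which is precisely what Theorem~\ref{thm:ABTW+} is designed to handle; all other verifications are immediate from normality and \eqref{ABT0}.
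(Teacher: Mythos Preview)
Your proof is correct and follows essentially the same route as the paper's: both arguments reduce to checking that $\{\zero,\one\}$ is closed under $f$ and $g$, with the only nontrivial value $g(\one,\one)=\zero$ obtained via Theorem~\ref{thm:ABTW+} (together with Proposition~\ref{prop:b-algebra-is-weak-b}). The paper packages this more tersely by pointing back to Example~\ref{ex:QnotS}, but the substance is identical.
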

\begin{proof}
Suppose that $\frA' \df \klam{A,f,g} \in \Abtw$  has at least four elements, and let $\two$ be the subalgebra of $\frA'$ generated by $\set{\zero,\one}$. Then, $\two$ is isomorphic to $\frA_0$ or $\frA_1$ of Example \ref{ex:QnotS}, and it is not isomorphic to $\frA_0$ since $\frA$ has at least four elements.
\end{proof}

We know from Theorem~\ref{th:full-complex-is-BTA} that the full PS-complex algebra of a b-frame is a b-algebra, i.e., that $\Cbtw \subseteq \Abtw$. The natural question arising at this point is whether the converse inclusion holds, that is, \emph{whether every b-algebra is a complex algebra.}\footnote{The question is motivated by a similar result for unary weak MIAs and the logic $K\tilde{~}$, see \cite[Important Lemma]{gpt87}, \cite[Theorem 8.5]{dot_mixed}.} As the following example shows, this is not necessarily the case.

\begin{example}\label{ex:non-b-representable}
    Let $A$ be the eight-element Boolean algebra with atoms $\set{a,b,c}$ and $f$ and $g$ be as below (for the brevity of presentation we write $xy$ instead of $x+y$):
 \[
 \begin{array}{c|ccccccccccc|cccccccc}
f & \zero & a& b&c&ab&ac&bc & \one &&&g& \zero& a& b&c&ab&ac&bc & \one    \\ \hline
\zero & \zero & \zero & \zero & \zero & \zero & \zero & \zero & \zero  &&& \zero & \one & \one& \one& \one& \one& \one& \one& \one \\
a & \zero & a & \one& \one& \one& \one& \one& \one &&& a & \one & a& a& \zero& a& \zero& \zero& \zero\\
b & \zero & \one & b& bc& \one& \one& bc& \one &&& b & \one & a& \zero& \zero& \zero& \zero& \zero& \zero\\
c & \zero & \one & bc& c& \one& \one& bc& \one &&& c & \one & \zero& \zero& \zero& \zero& \zero& \zero& \zero\\
ab & \zero & \one & \one& \one& \one& \one& \one& \one &&& ab & \one & a& \zero& \zero& \zero& \zero& \zero& \zero\\
ac & \zero & \one & \one& \one& \one& \one& \one& \one &&& ac & \one & \zero& \zero& \zero& \zero& \zero& \zero& \zero\\
bc & \zero & \one & bc& bc& \one& \one& bc& \one &&& bc & \one & \zero& \zero& \zero& \zero& \zero& \zero& \zero\\
\one & \zero & \one & \one& \one& \one& \one& \one& \one &&& \one & \one & \zero& \zero& \zero& \zero& \zero& \zero& \zero\\
 \end{array}
 \]
$\frA\defeq\langle A,f,g\rangle$ is a PS-algebra, and by routine calculations\footnote{Software assisted, if one wishes, e.g. by UACalc \citep{UACalc}.} we can check that it is a b-algebra. We will demonstrate that $\frA$ cannot be embedded into the full complex algebra of a b-frame, and therefore, it is not a b-complex algebra.
%

Assume towards a contradiction that it is. Then, there exists a b-frame $\frF =\klam{U,B}$ such that $\frA$ is isomorphic to a subalgebra $S$ of $\Cm^{ps}(\frF)$. Let $i\colon A\to S$ be an isomorphism of b-algebras. Thus, $i(a)\neq\emptyset$, and $\suff{B}(i(a),i(a))=i(a)$, that is $\suff{B}(i(a),i(a))\neq\emptyset$. It follows from Theorem~\ref{thm:ABTW+} that $i(a)$ is a singleton, i.e., there is $x\in U$ such that $i(a)=\{x\}$. On the other hand, we have $\poss{B}(i(a),i(c))=U$ and $\suff{B}(i(a),i(c))=\emptyset$. We will show that this is not possible because $x\in \suff{B}(i(a),i(c))$. Let $y\in i(c)$. Since $y\in \poss{B}(i(a),i(c))$, there exists $z\in i(c)$ such that $\klam{x,y,z}\in B$. By \eqref{BT2}, $\klam{x,x,y}\in B$. Thus, $i(a)\times\set{x}\times i(c)\subseteq B$ by arbitrariness of $y$, and we obtain a contradiction.\QED
\end{example}

The weak MIA axiom \eqref{g2f2} implies that $S_g \subseteq Q_f$, and one may ask under which conditions equality $Q_f = S_g$ holds, and, if so, whether $Q_f$ is a b-relation;  we shall answer this question below. In analogy to the unary case \eqref{MIA} we say that $\frA$ is a \emph{mixed algebra} (MIA), if $Q_f = S_g$, i.e., $f[\ult_1 \times \ult_3] \subseteq \ult_2$ \tiff $g[\ult_1 \times \ult_3] \cap \ult_2 \neq \z$.

Our next result implies that full finite b-complex algebras are MIAs:
\begin{proposition}\label{lem:Q=Sprinc}
Assume that $\ult_1, \ult_2,\ult_3$ are principal ultrafilters of $2^U$. Then $Q_{\poss{B}}(\ult_1, \ult_2,\ult_3)$ implies $S_{\suff{B}}(\ult_1, \ult_2,\ult_3)$. In particular, if $U$ is finite, then $Q_{\poss{B}}=S_{\suff{B}}$.
\end{proposition}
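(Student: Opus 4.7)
The plan is to reduce both $Q_{\poss{B}}$ and $S_{\suff{B}}$, when restricted to principal ultrafilters of $2^U$, to the original ternary relation $B$ on $U$. Recall that a principal ultrafilter of $2^U$ is of the form $\ua{\{x\}} = \{X \subseteq U \mid x \in X\}$ for some $x \in U$. Throughout, fix $x,y,z \in U$ and write $\ult_1 \defeq \ua{\{x\}}$, $\ult_2 \defeq \ua{\{y\}}$, $\ult_3 \defeq \ua{\{z\}}$.

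First I would unpack $Q_{\poss{B}}(\ult_1,\ult_2,\ult_3)$. By definition this means that $y \in \poss{B}(X,Z)$ for every $X \ni x$ and every $Z \ni z$. Specialising to the singletons $X = \{x\}$ and $Z = \{z\}$ and using the defining formula for $\poss{B}$ yields $B(x,y,z)$. Conversely, $B(x,y,z)$ gives $y \in \poss{B}(\{x\},\{z\})$, and monotonicity of $\poss{B}$ in both arguments then propagates membership to every $X \ni x$ and $Z \ni z$. Hence
\[
Q_{\poss{B}}(\ua{\{x\}},\ua{\{y\}},\ua{\{z\}}) \iff B(x,y,z).
\]

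Next I would unpack $S_{\suff{B}}(\ult_1,\ult_2,\ult_3)$. This means that there exist $X \ni x$ and $Z \ni z$ with $y \in \suff{B}(X,Z)$, i.e., $X \times \{y\} \times Z \subseteq B$. In particular $B(x,y,z)$. For the converse, if $B(x,y,z)$ holds, then the singletons $X = \{x\}$ and $Z = \{z\}$ witness $y \in \suff{B}(\{x\},\{z\})$, so $S_{\suff{B}}(\ua{\{x\}},\ua{\{y\}},\ua{\{z\}})$. Thus
\[
S_{\suff{B}}(\ua{\{x\}},\ua{\{y\}},\ua{\{z\}}) \iff B(x,y,z).
\]

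Putting the two equivalences together gives not only the promised implication $Q_{\poss{B}} \rarrow S_{\suff{B}}$ on triples of principal ultrafilters but in fact coincidence there. For the ``in particular'' clause, when $U$ is finite every ultrafilter of $2^U$ is principal, so the equivalence just established extends to all of $\Ult(2^U)^3$, and combined with the general inclusion $S_{\suff{B}} \subseteq Q_{\poss{B}}$ furnished by Lemma~\ref{lem:S2Q2} (since $\Cm^{ps}(\frF)$ satisfies \eqref{g2f2} by Lemma~\ref{lem:g2f2Cm}) we conclude $Q_{\poss{B}} = S_{\suff{B}}$.

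There is no real obstacle here; the whole argument is a definitional unfolding that exploits two features of principal ultrafilters of power-set algebras: they are generated by singletons, so the universal/existential quantifiers in the definitions of $Q$ and $S$ can both be collapsed to evaluation at those singletons. The only thing to be mildly careful about is the direction of monotonicity (upward for $\poss{B}$, downward for $\suff{B}$) when passing between $\{x\},\{z\}$ and arbitrary $X \ni x$, $Z \ni z$.
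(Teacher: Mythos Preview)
Your proof is correct and follows essentially the same approach as the paper: both arguments evaluate the definitions of $Q_{\poss{B}}$ and $S_{\suff{B}}$ at the singleton generators $\{x\},\{z\}$ of the principal ultrafilters to reduce everything to $B(x,y,z)$. You go slightly further than the paper by establishing the full two-way equivalence with $B$ (the paper only chases the implication $Q\Rightarrow S$) and by explicitly invoking Lemma~\ref{lem:S2Q2} for the reverse inclusion in the finite case, which the paper leaves implicit.
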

\begin{proof}
Let $\ult_1 = \ua{\set{x}}, \ult_2 = \ua{\set{y}}, \ult_3 = \ua{\set{z}}$ and $Q_{\poss{B}}(\upop\{x\},\upop\{y\},\upop\{z\})$. Then $\poss{B}[\upop\{x\} \times\upop\{z\}] \subseteq \upop\{y\}$, in particular, $\poss{B}(\set{x},\set{z}) \in \ua{\set{y}}$, which implies $B(x,y,z)$. It follows that $y \in \suff{B}(\set{x}, \set{z})$, hence, $\suff{B}(\set{x}, \set{z}) \in \ua{\set{y}}$, and therefore, $\suff{B}[\upop\{x\} \times\upop\{z\}] \cap \ua{\set{y}} \neq \z$, i.e., $S_{\suff{B}}(\upop\{x\},\upop\{y\},\upop\{z\})$.
\end{proof}

On the other hand, there are finite b-complex algebras in which the equality $Q_f=S_g$ fails such as the algebra $\frA_1$ of Example \ref{ex:QnotS}.
Our next example exhibits an infinite full complex b-algebra which is not a MIA. Theorem \ref{th:no-infinite-b-MIAS} below shows that this is no accident.

\begin{example}\label{ex:notMIA}
By Corollary~\ref{prop:B-from-poset}, the relation $B_{\leqslant}$ obtained from the natural ordering $\leqslant$ of $\omega$ is a betweenness relation, i.e., $\frN\defeq\klam{\omega,B_{\mathord{\leqslant}}}$ is a b-frame. Let $X,Y \subseteq \omega$  be infinite; then, both $X$ and $Y$ are cofinal, i.e., for every $n \in \omega$ there are $k \in X, m \in Y$ such that $n \leqslant k,m$.  Hence, $n \in \poss{B_{\leqslant}}(X,Y)$ \tiff $\min{X} \leqslant n$ or $\min{Y} \leqslant n$, and it follows that $\poss{B_{\leqslant}}(X,Y) = \ua{\min(X \cup Y)}$. In particular, $\poss{B_{\leqslant}}(X,Y)$ is cofinite, and therefore, $\poss{B_{\leqslant}}[\ult_1 \times \ult_2] \subseteq \ult$ for all free ultrafilters $\ult_1,\ult_2,\ult$ of $\omega$. On the other hand, $\suff{B_{\leqslant}}(X,Y) = \z$. Assume that $k \in \suff{B_{\leqslant}}(X,Y)$. Since $X,Y$ are cofinite, there are $n \in X, m \in Y$ such that $k<n,m$ which implies that $\klam{n,k,m} \not\in B$, a contradiction. Therefore $Q_{\poss{B_{\leqslant}}}\neq S_{\suff{B_{\leqslant}}}$, and the full complex algebra of $\frN$ is not a~MIA.

Let us point out an analogy holding between $\frN$ and the standard ultrafilter extension of the binary frame $\frN_2\defeq\langle\omega,\mathord{\leq}\rangle$. In case of the latter, principal ultrafilters correspond to the natural numbers, and free ultrafilters are clustered at infinity, in the sense that every free $\ult$ is accessible from every principal $\ult'$, and the accessibility relation is universal on the set of free ultrafilters. In case of $\frN$ the situation is somewhat analogous, since as we have seen $Q_{\poss{B_{\leqslant}}}$ is universal on free ultrafilters, and for any $n,m \in \omega$ such that $m\leqslant n$, and any free $\ult$ we have $Q_{\poss{B_{\leqslant}}}(\upop\{m\},\upop\{n\},\ult)$. Indeed, as we have seen above, $\poss{B_{\leqslant}}(X,Y) = \ua{\min(X \cup Y)}$, and since $m\in X\cup Y$, $\min(X \cup Y)\leqslant n$. Therefore, $n\in \ua{\min(X \cup Y)}$, as required. Thus $Q_{\poss{B_{\leqslant}}}$ may be viewed as a relation that puts a large cluster of free ultrafilters at infinity in case of the 3-frame $\frN$.
\QED
\end{example}

\begin{theorem}\label{th:no-infinite-b-MIAS}
    No infinite b-algebra is a MIA.
\end{theorem}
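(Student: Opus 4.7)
The plan is to argue by contradiction, pitting the reflexivity axiom \eqref{BT0CF} of Lemma \ref{lem:cmp} against the structural theorem \ref{thm:ABTW+} about the sufficiency operator, and exploiting the fact that any infinite Boolean algebra admits a free ultrafilter.

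First I would fix a b-algebra $\frA \defeq \klam{A,f,g}$ whose underlying Boolean algebra is infinite and I would produce an ultrafilter $\ult$ on $A$ that contains no atom of $A$. The standard justification goes through: the ideal $I$ of $A$ generated by the atoms must be proper, since otherwise $\one = a_1 + \cdots + a_n$ for finitely many atoms $a_i$, which would force $\card{A} \leqslant 2^n$ and contradict infiniteness. Extending $I$ to a maximal ideal and taking its set-theoretic complement yields an ultrafilter $\ult$ of $A$ which omits every atom.

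Next, I would assume toward a contradiction that $\frA$ is a MIA. By \eqref{BT0CF} we have $Q_f(\ult,\ult,\ult)$, and the MIA identity $Q_f = S_g$ upgrades this to $S_g(\ult,\ult,\ult)$. Unfolding the definition of $S_g$ gives elements $a,b \in \ult$ with $g(a,b) \in \ult$; in particular $g(a,b) \neq \zero$, and since ultrafilters are closed under finite meets and omit $\zero$, also $a \cdot b \in \ult$ and $a \cdot b \neq \zero$.

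Finally, every b-algebra is a weak b-algebra by Proposition \ref{prop:b-algebra-is-weak-b}, so the hypotheses of Theorem \ref{thm:ABTW+} are satisfied, and its second clause forces $a \cdot b$ to be an atom of $A$. But $a \cdot b \in \ult$ then contradicts the choice of $\ult$ as an atomless ultrafilter, finishing the argument. The main obstacle I anticipate is conceptual rather than technical: one must notice that the reflexivity of $f$ at the level of ultrafilters, once transported across the MIA bridge into a statement about $g$, immediately manufactures an atom inside every ultrafilter, and that no infinite Boolean algebra can sustain such a property on all of its ultrafilters.
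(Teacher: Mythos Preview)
Your argument is correct and actually streamlines the paper's proof. The paper proceeds in three stages: first it shows the result for \emph{full} b-complex algebras $\Cm^{ps}(\frF)$ with $U$ infinite (picking a free ultrafilter on $2^U$, so that any $X\cap Y$ in it is infinite and hence not an atom, contradicting Theorem~\ref{thm:ABTW+}); then it extends this to infinite subalgebras of such complex algebras; finally it handles a general infinite b-algebra $\frA$ by observing that if $\frA$ were a MIA then $\klam{\Ult(A),Q_f}$ would be a b-frame, so $\Em^{ps}(\frA)$ would be a full b-complex algebra, and the image of $\frA$ inside it would violate step~2. Your route bypasses this detour through representation entirely: you work directly in $\frA$, replace ``free ultrafilter on $2^U$'' by the abstract analogue ``ultrafilter of $A$ disjoint from $\At(A)$'' (obtained from the proper ideal generated by the atoms), and then run exactly the same collision between \eqref{BT0CF}, the MIA identity, and Theorem~\ref{thm:ABTW+}. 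What your approach buys is economy: no need for the embedding of Theorem~\ref{thm:repalg} or the intermediate b-frame structure. What the paper's staging buys is perhaps some conceptual transparency about where the obstruction lives (in the complex-algebra picture), but logically your single-step argument subsumes all three of theirs.
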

\begin{proof}
We will proceed in three steps:
\begin{enumerate}
\item \emph{No infinite full b-complex algebra is a MIA}: Let $\ult$ be a free ultrafilter on $2^U$. Then, $Q_{\poss{B}}(\ult,\ult,\ult)$ by  Lemma \ref{lem:cmp}. Assume that $S_{\suff{B}}(\ult,\ult,\ult)$. Then, there are $X,Y \in \ult$ such that $\suff{B}(X,Y) \in \ult$. Since $X \cap Y \in \ult$ and $\ult$ is a free ultrafilter, $X \cap Y$ is infinite. This contradicts Theorem~\ref{thm:ABTW+}.

\item \emph{No infinite subalgebra of a full b-complex algebra is a MIA}: Let $\mathfrak{S}\defeq\klam{ D,\poss{B}_D,\suff{B}_D}$ be an infinite subalgebra of $\Cm^{ps}(\frF)$, and $\ult$ be a free ultrafilter of $D$. Observing that $\Abtw$ is a universal class, we see that $\mathfrak{S}$ is a b-algebra, and therefore, $Q_{\poss{B}_D}(\ult,\ult,\ult)$, again by  Lemma \ref{lem:cmp}.

    Assume that $S_{\suff{B}_D}(\ult,\ult,\ult)$, i.e., $\suff{B}_D[\ult \times \ult] \cap \ult \neq \z$. Let $X,Y \in \ult$ be such that $\suff{B}_D(X,Y) \in \ult$; then, $\suff{B}_D(X,Y)\neq\emptyset$ and $X \cap Y$ is infinite. $\suff{B}_D(X,Y) = \suff{B}(X,Y)$, so again, we have a contradiction with Theorem~\ref{thm:ABTW+}.


\item \emph{No infinite b-algebra is a MIA}:     Suppose $\frA\defeq\klam{A,f,g}$ is an infinite MIA. Then $Q_f=S_g$, and the reduct of $\Cf^{ps}(\frA)$ to $\klam{\Ult(A),Q_f}$ is a b-frame by Lemma~\ref{lem:cmp}. Hence, $\Em^{ps}(\frA)$ is a full b-complex algebra, and by Theorem \ref{thm:repalg}, $\frA$ is isomorphic to a subalgebra of $\Em^{ps}(\frA)$. It follows from the previous results that $\frA$ is not a MIA.
\end{enumerate}
This concludes the proof.
\end{proof}

\pagebreak

The next example is an algebraic explanation of the non-definability result of Theorem \ref{thm:BnotP}.

\begin{example}\label{ex:Z}
Let $\frZ\defeq\langle\Intgr,B_{\leqslant}\rangle$ be as in Theorem \ref{thm:BnotP} and consider its full complex algebra $\Cm^{ps}(\frZ)$. Let $E$ and $O$ be the sets of even and odd integers, respectively. Then, $A \df \set{\z,E,O,\Intgr}$ is a Boolean subalgebra of $2^{\Intgr}$, and we shall show that it is a PS-subalgebra of $\Cm^{ps}(\frZ)$. If $x \in \Intgr$, then there are even and odd numbers below and above $x$ which shows that $\poss{B}(E,E) = \poss{B}(E,O) = \poss{B}(O,E) = \poss{B}(O,O) =\Intgr$. Furthermore, there are odd and even numbers strictly greater than $x$, and it follows that $\suff{B}(E,E) = \suff{B}(E,O) = \suff{B}(O,E) = \suff{B}(O,O) =\z$. Thus, $\frA\defeq\klam{A,\poss{B},\suff{B}}$ is a PS-subalgebra of $\Cm^{ps}(\frZ)$.

On the other hand consider the full complex algebra of the frame $\klam{U,R}$, where $R$ is the ternary universal relation on $U \df \set{w_0,w_1}$. There, $\suff{R}(X,Y) = U$ for all $X,Y \subseteq U$, contrasting the considerations above.

The algebra $\Cm^{ps}(\frZ)$ has yet one more interesting property. Since for any $X,Y\in 2^U$, $\suff{B}(X,Y)$ is always finite, if $\suff{B}[\ult_1\times\ult_2]\cap\ult\neq\emptyset$, $\ult$ must be a principal filter, i.e., for some $x\in U$, $\ult=\upop\{x\}$. Therefore, $Q_{\poss{B}}(\ult,\ult,\ult) $ and not $S_{\suff{B}}(\ult,\ult,\ult)$ for all free ultrafilters of $2^{\Intgr}$.

As can be seen from the proof above, the only property of $\langle\Intgr,\mathord{\leqslant}\rangle$ that was needed to obtain both conclusions was its order type, which is $\omega^\ast+\omega$. 
\QED\end{example}

\begin{example}
 In Example~\ref{ex:Z} we saw a full complex algebra for which the relation $S_{\suff{B}}$ is empty on the set of triples of free ultrafilters. We will show now that this does not have to be the case. To this end,  consider the strong b-frame $\frQ\defeq\langle\Rat,\Bleqs\rangle$ induced by the standard dense linear order $\leqslant$ on the set of rational numbers~$\Rat$. Consider three intervals $X\defeq(-\infty,0)$, $Y\defeq [0,1]$ and $Z\defeq(1,+\infty)$. Let $\ult_X,\ult_Y$ and $\ult_Z$ be free ultrafilters containing the respective intervals. In this case we have that $\suff{B}(X,Z)=Y$ and so $\suff{B}[\ult_X\times\ult_Z]\cap\ult_Y\neq\emptyset$. Generally, for any pair $\ult_1,\ult_2$ of free ultrafilters such that $(r,p)\in\ult_1$, $(q,s)\in\ult_2$ and $r<p<q<s$ there is a free ultrafilter $\ult$ such that $S_{\suff{B}}(\ult_1,\ult_2,\ult)$, i.e., any such ultrafilter which contains the open interval $(p,q)$.\QED
\end{example}

\section{Summary and further work}\label{sec:conclusion}

We have put forward---well justified---axioms for an algebraic treatment of reflexive betweenness relations. The resulting class $\Abtw$ of b-algebras turned out to have some <<good>> properties such as closure under canonical extensions, and some <<bad>> ones, too, such as non-b-representability. In the context of the latter, the most pressing questions now are the following:
\begin{enumerate}
    \item Is there for any b-algebra $\frA$ a 3-frame $\frF\defeq\klam{U,R}$ such that $\frA$ is embeddable into the complex PS-algebra of $\frF$? We know from Example~\ref{ex:non-b-representable} that $\frF$ cannot be a b-frame, but it may be possible that there is a larger class of 3-frames that can give us representability. Basically we ask if we can prove an analog of Theorem 8.5 from \cite{dot_mixed} or the \emph{Important Lemma} from \cite{gpt87}.
    \item Which, if any, axioms can we add to \eqref{ABT0}--\eqref{ABT3} and \eqref{g2f2} to obtain a subclass of $\Abtw$ that is b-representable?
\end{enumerate}

We also investigated properties of the possibility and sufficiency operators in the context of b-algebras, and although initially it seemed that our axioms say little about $g$, we were able to prove---especially in Section~\ref{sec:BetAlg}---quite a number of strong properties, some of them limiting in nature. Also, we knew that $f$ and $g$ should be bounded by certain axioms to cooperate fruitfully, but to our surprise, it turned out in Theorem~\ref{th:no-infinite-b-MIAS} that the cooperation cannot be too strong. Still, we believe that there are new connections to be discovered, which will result in further insights into algebraic aspects of betweenness.

What we entirely left out of this paper---but by no means neglected---are problems of axiomatizations of various subclasses of $\Abtw$ such as $\Cbtw$. These we are going to pursue in future installments of the work presented here.

\section*{Acknowledgements}

\begin{sloppypar}

This research was funded by the National Science Center (Poland), grant number 2020/39/B/HS1/00216, ``Logico-philosophical foundations of geometry and topology''.

For the purpose of Open Access, the authors have applied a CC-BY public copyright license to any Author Accepted Manuscript (AAM) version arising from this submission.

\end{sloppypar}

\bibliographystyle{plainnat}
\providecommand{\noop}[1]{}

\end{document}

\section{New result (temporary section)}

\begin{theorem}
    In every weak b-algebra $\frA$\/\textup{:}
    \[
        a\cdot b\neq\zero\neq g(a,b)\rarrow a\cdot b=g(a,b)\in\At(A)\,.
    \]
\end{theorem}
\begin{proof}
    Firstly, we show that in every weak b-algebra $\frA$, if $x\neq\zero\neq g(x,x)$, then $x$ is an atom and $g(x,x)=x$. To this end, suppose $x\neq\zero\neq g(x,x)$ and assume $a$ is a non-zero element such that $a<x$. By Theorem~\ref{thm:ABTW+} we have that $x\leq g(x,x)$. Using antitoness of $g$ and \eqref{ABTW} we obtain
    \[
    a<x\leq g(x,x)\leq g(a,a)\leq a\,,
    \]
which is a plain contradiction.

Secondly, assume $a\cdot b\neq\zero\neq g(a,b)$. Since $g(a,b)\leq g(a\cdot b,a\cdot b)$, we have that $g(a\cdot b,a\cdot b)\neq\zero$, and applying the result above, we see that $a\cdot b\in\At(A)$ and $a\cdot b=g(a\cdot b,a\cdot b)$. But:
    \[
        \zero\neq g(a,b)\leq g(a\cdot b,a\cdot b)=a\cdot b\,,
    \]
    so $g(a,b)=a\cdot b$, as required.
\end{proof}